\documentclass[11pt]{article}
\usepackage{mainsty}

\newcommand{\CRE}{\textsf{CRE}}
\newcommand{\BRE}{\textsf{BRE}}
\newcommand{\DIM}{\textsf{DIM}}
\newcommand{\HT}{\textsf{HT}}
\newcommand{\cHT}{\textsf{cHT}}
\newcommand{\Opt}{\textsf{Opt}}

\title{Sharp Minimax Theory for Randomized Experiments}

\author{Timothy Sudijono\footnote{Email: \url{tsudijon@wharton.upenn.edu}}, Edgar Dobriban, Eric Tchetgen Tchetgen}
\date{\today}

\begin{document}
\maketitle
\begin{abstract}
We study minimax-optimal designs and estimators for estimating the sample average treatment effect in finite population randomized experiments, where both design and estimator are unrestricted. For binary potential outcomes, we show this minimax risk is equivalent to the minimax risk $\rho_n^*$ of an estimation problem with $2$ unknown parameters. We leverage this reduction to establish a second-order risk expansion $\rho_n^* = n^{-1} - Cn^{-4/3} + o_n(n^{-4/3})$ for an explicit constant $C$ related to the Airy function. The minimax risk is attained by Bernoulli randomization with a nonlinear shrinkage estimator. Our results show that standard procedures such as complete randomization with difference in means are only minimax optimal up to first order in $n.$ We derive further results on admissibility of these procedures and discuss the practical implications of our results.
\end{abstract}

\noindent\textbf{Keywords:} Minimax risk, experimental design, Bernoulli randomization, nonlinear shrinkage, design-based causal inference, least-favorable prior


\section{Introduction}
\label{sec:introduction}

The fundamental goal of randomized experiments is to estimate the effect of an intervention on a population of interest \cite{fisher1935design}. Two factors are crucial to this enterprise: the choice of experimental design used to randomize the treatment and control, and the choice of estimator used to analyze the data. Naturally, a vast literature has developed on the optimality of experimental designs, and separately, estimators, under various criteria \cite{elfving1952optimum, kiefer1974general, pukelsheim2006optimal, wu1981robustness, li1983minimaxity, hooper1989minimaxity, waite2022minimax, karwa2023admissibility, bai2022optimality, basse2023minimax, kallus2018optimal, kallus2021optimality}. However, a fundamental understanding of optimal \textit{joint} selection over the space of feasible estimators and designs remains incomplete. 

An important goal towards closing this gap is to characterize the minimax risk of estimating the sample average treatment effect, optimizing over all choices of estimators and designs. Understanding the minimax risk is important: it serves as a theoretical benchmark of robustness and efficiency, allowing us to distinguish between designs and estimators. This question has been investigated only recently in \cite{kandiros2026design} for finite-population network experiments and in special cases by \cite{bai2023randomize, kallus2018optimal, kallus2021optimality}. In this paper, we provide a precise characterization of this minimax risk under no interference. We determine the sharp constant and second-order term in the minimax risk and show that an optimal choice of design and estimator is given by balanced Bernoulli randomization with a nonlinear shrinkage estimator.

\subsection{Main Results}
Following the potential outcome framework \cite{neyman1923application, rubin1974estimating}, let $Y_i(a), i=1,\dots,n$ denote a fixed binary potential outcome had unit $i$ received treatment $a=0,1$. Let $\tau=\frac{1}{n} \sum_{i=1}^n \left(Y_i(1) - Y_i(0)\right)$ be the sample average treatment effect, and $A_1,\dots,A_n$ be treatment indicators with joint law $\cl{A}$, which we call the \textit{design}. We observe data $Y_i = A_iY_i(1) + (1-A_i)Y_i(0)$ and wish to estimate $\tau$ using an estimator $\hat \tau(A,Y)$. We adopt the terminology in \cite{kandiros2026design} and call a joint choice of design and estimator a \textit{procedure}. For any procedure $(\cl{A},\hat \tau)$, let $R_n(\cl{A},\hat \tau)$ denote its risk $\E[(\hat \tau(A,Y) - \tau)^2]$, where the expectation is taken over the randomness in the design. The choice of risk as mean squared error averaging over the treatment allocation is a standard metric of accuracy, because it succinctly summarizes robustness and efficiency of a given estimator under a fixed design. It is therefore natural to characterize its optimal behavior, in particular the minimax risk
\begin{equation}
\label{eq:minimax_risk_joint_estimator_design}
\inf_{\cl{A}, \hat \tau} \sup_{ \cl{P} \in \set{0,1}^{2n}} R_n(\cl{A},\hat \tau).
\end{equation}
Here, $\cl{P} = \set{Y_i(1),Y_i(0)}_{i=1}^n$ denotes a configuration of potential outcomes and the infimum is taken over all choices of procedure. 

We show that \eqref{eq:minimax_risk_joint_estimator_design} admits a characterization as the minimax risk $\rho_n^*$ of a reduced model in which we have nonnegative integer parameters $p,q,r$ which sum to $n$, we observe data $X = p + \Bin(r,1/2)$, and we wish to estimate $(p-q)/n$.  For moderate $n$, it is feasible to exactly compute $\rho_n^*$ as well as optimal procedures. A minimax optimal procedure is Bernoulli randomization with equal treatment probability and a nonlinear function applied to the transformed data $\sum_{i=1}^n (A_iY_i + (1-A_i)(1-Y_i)).$ The nonlinearity corresponds to shrinkage towards zero under a least-favorable prior. These results are discussed in Section \ref{sec:reduction_to_simplified_model}, which also establishes similar results for bounded potential outcomes whether discrete or continuous.

On the theoretical front, we establish in Section \ref{sec:second_order_risk} a second-order expansion of the minimax risk 
\begin{equation}
\label{eq:second_order_risk_minimax_intro}
\rho_n^* = \frac{1}{n} - \frac{C_A}{n^{4/3}} + o_n(n^{-4/3}).
\end{equation}
The constant $C_A$ is related to the Airy function, one of the linearly independent solutions of the differential equation $d^2y/dx^2 - yx = 0.$ The asymptotic expansion to second-order is evocative of similar results in the minimax estimation of bounded normal means \cite{bickel1981minimax} and other settings \cite{johnstone1992minimax}. The proof strategy for \eqref{eq:second_order_risk_minimax_intro} is similar to the line of work by Levit \cite{levit1981asymptotic,levit1983minimax,levit1986second}, which exposes a connection between the second-order term in a minimax risk expansion and the solution of a problem-specific differential equation, in this case the Airy equation. To our knowledge, the appearance of the $n^{-4/3}$ term and connection to the Airy function is nonstandard and new in this literature.

Our results provide novel perspectives on standard procedures, carefully discussed in Section \ref{sec:comparison_to_standard_methods}. Common procedures in the literature typically use complete randomization (\CRE) or Bernoulli randomization (\BRE). These designs appear in the basic textbooks in the field \cite{imbens2015causal, ding2024first} and have a long, rich tradition \cite{fisher1992arrangement,neyman1923application,kempthorne1955randomization}. For example, Imbens and Rubin name these among the four classical experimental designs \cite{imbens2015causal}.  These designs are often paired with the difference-in-means (\DIM) or the Horvitz-Thompson (\HT) estimators. Much work in the design-based causal inference literature focuses on these estimators \cite{aronow2013class,aronow2014sharp,lin2013agnostic, athey2017econometrics, imbens2015causal, chattopadhyay2024neymanian, li2017general, harshaw2021algorithmic}. In light of \eqref{eq:second_order_risk_minimax_intro}, we find that the procedures (\CRE, \DIM), (\BRE, \DIM) are only minimax-optimal to first order. Figure \ref{fig:minimax_risk_curves} displays the maximum risks of these procedures alongside $\rho_n^*$. For small to moderately sized $n$, the maximum risk of these procedures is substantively larger than $\rho_n^*$, which also serves as a lower bound for the maximum risk. For example, at $n = 100$, the maximum risk for (\CRE, \DIM) is about 40\% larger than the optimum. Randomized controlled trials in practice are often of this size \cite{marshall2021state}. Complementing our analysis on minimaxity, we show that (\BRE, \Opt) is admissible. On the other hand, (\CRE, \DIM) is also admissible and dominates two other commonly used procedures, (\BRE, \DIM), (\BRE, \HT). Neither (\BRE, \Opt) nor (\CRE, \DIM) dominates the other.

\begin{figure}[H]
    \centering
    \includegraphics[width=0.8\linewidth]{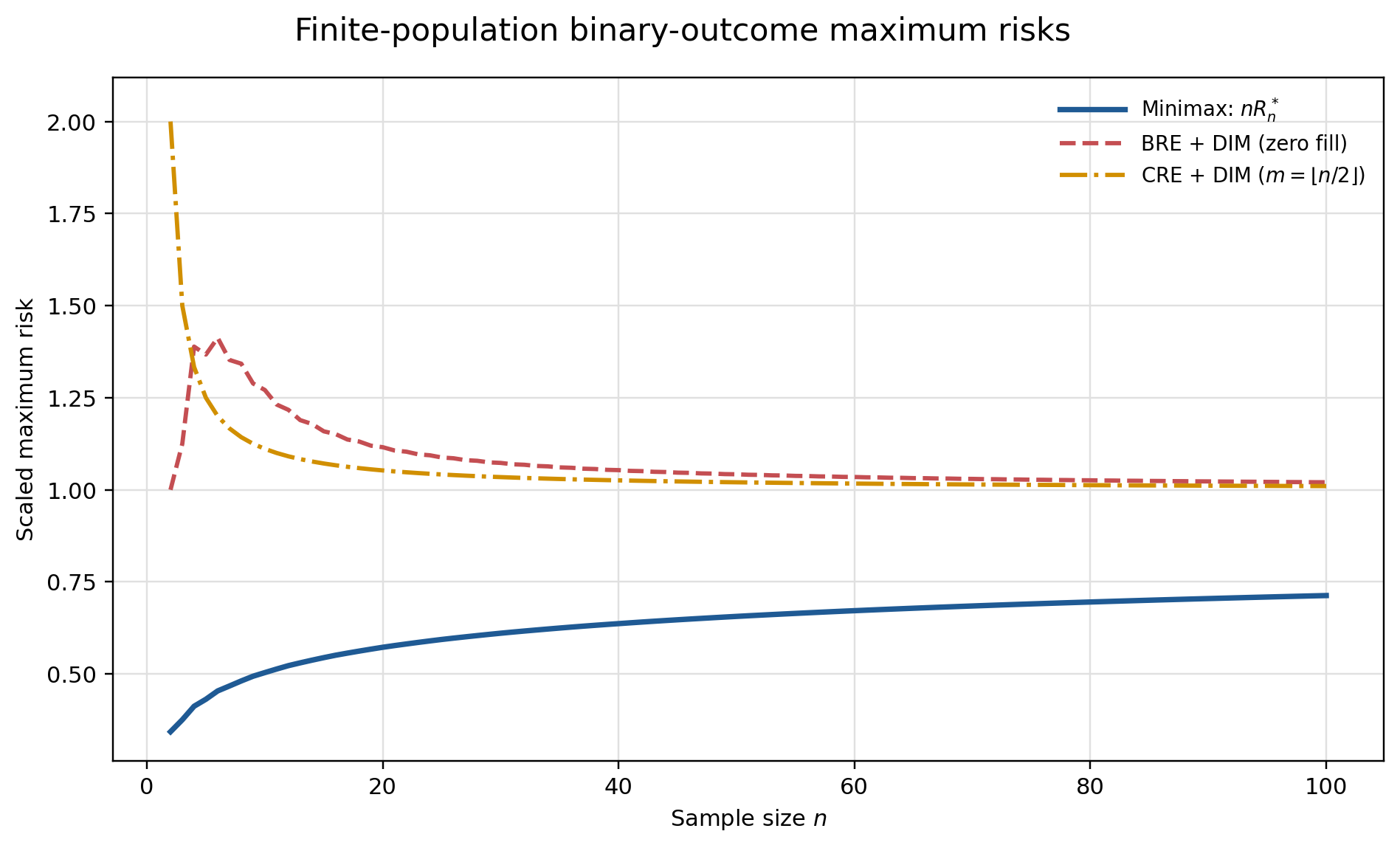}
    \caption{Maximum Risks of Various Designs and Estimators, multiplied by $n$. The maximum risk of \BRE \ with the Horvitz-Thompson estimator is $4/n$ and not plotted here.}
    \label{fig:minimax_risk_curves}
\end{figure}

\subsection{Related Work} 

Most closely related to this work are previous studies by Kallus \cite{kallus2018optimal,kallus2021optimality}, Bai \cite{bai2023randomize}, and Kandiros et al. \cite{kandiros2026design}. These works have a slightly richer model of nature than the present paper, allowing for covariates, distributions on potential outcomes, or violations of SUTVA. Thus, these works are complementary to our paper. In \cite{kallus2018optimal,kallus2021optimality}, Kallus solves for the minimax optimal design, fixing the estimator to be difference in means. Complete randomization is shown to be minimax optimal under bounded norm assumptions on the potential outcome matrix. Under restrictions on the relationship between the potential outcomes and covariates, other designs are found to be minimax optimal. \cite{bai2023randomize} generalizes some of the previous results. Under a superpopulation framework, Bai proves a general design minimaxity theorem for estimating arbitrary contrasts of potential outcome means, assuming a class of potential outcome distributions invariant under some group of permutations. As a corollary of this general theorem, $(\CRE,\DIM)$ is found to be the minimax optimal procedure restricted to estimators which are \textit{linear} in the observed data $Y$. 

Kandiros et al. \cite{kandiros2026design} tackle our same question of minimax optimality over joint choices of design and estimator for network interference experiments. They introduce the terminology \textit{procedure} to refer to such a joint choice, which we will follow throughout this paper. \cite{kandiros2026design} characterizes optimal minimax rates for estimating the global average treatment effect and other estimands, with upper and lower bounds that depend on properties of the interference graph. Specializing to the SUTVA case gives a minimax rate of $\Theta(1/n)$. The present paper sharpens the SUTVA analysis, precisely characterizing the constant as well as the second-order term. In the interference literature, \cite{karwa2023admissibility} provides another example of decision-theoretic optimality results on standard causal estimators.

The problem of determining a minimax procedure has also been considered in survey sampling \cite{stenger1979minimax, stenger1988asymptotic, rinott2009some}. Proposition 17 of the survey by Rinott \cite{rinott2009some} provides the answer: Among procedures sampling $n$ units from a population of size $N$, the minimax optimal procedure is simple random sampling with an affine shrinkage rule introduced in \cite{hodges1982minimax}. The minimax procedure and risk admit explicit formulas, from which it can be shown that
\begin{equation}
\frac{a_\lambda}{N} - \frac{b_\lambda}{N^{3/2}} + o(N^{-3/2}),
\end{equation}
for a finite population with elements in $[0,1]$ and $n = \floor{\lambda N}, \lambda \in (0,1)$, with some constants $a_\lambda,b_{\lambda}.$ This result serves as an interesting point of comparison to the randomized experiment setting. The second-order term decays faster than in the randomized experiment setting, and  
affine shrinkage is optimal instead of nonlinear shrinkage. 

The recent paper
\cite{aronow2026minimax} extends these results, deriving the minimax optimal choice of procedure under unbiasedness restrictions and non-symmetric parameter spaces. Therein, they show the centered version of the Horvitz-Thompson estimator \cite{aronow2013class} is minimax optimal. This estimator also plays a role in the randomized experiment setting. We also note that results separately optimizing the sampling design or estimator abound in this literature \cite{godambe1965admissibility, hodges1982minimax, cheng1983minimax, cheng1987optimality, bickel2011minimax, joshi1979best, scott1975minimax}.

\section{Reduction to a Simplified Model}
\label{sec:reduction_to_simplified_model}

In this section, we describe the reduced two-parameter model with minimax risk $\rho_n^*$ and analyze its properties. The connection to the original procedure problem is made in the next subsection. Consider first a statistical model with unknown parameters $\Delta_1,\dots,\Delta_n \in \set{-1,0,1}$ and data $(S_1,\dots,S_n)$ given by 
\begin{equation}
\label{eq:S_definition} 
\begin{cases}
S_i = 1 & \text{ if } \Delta_i = 1 \\
S_i = 0 & \text{ if } \Delta_i = -1 \\
S_i \sim \textsf{Ber}(1/2) & \text{ if } \Delta_i = 0,
\end{cases}
\end{equation}
all mutually independent. $\Delta_i$ will be interpreted as the individual treatment effects $Y_i(1) - Y_i(0)$ in the context of the original problem. Let $p$ denote the number of indices $i$ with $\Delta_i = 1,$ $q$ the number of indices with $\Delta_i = -1$ and $r$ the remainder. We are interested in estimating 
\[
\tau(\Delta) := \frac{1}{n}\sum_{i=1}^n \Delta_i = \frac{p - q}{n}.
\]
Let $r_n^*$ denote the minimax risk for estimating $\tau(\Delta)$ given data $(S_1,\dots,S_n)$. By an invariance argument, it suffices to keep track of the parameters $p,q,r$ and to use estimators which are functions of $\sum_{i=1}^n S_i$. Therefore, it suffices to study the minimax risk of the even simpler model with parameters $(p,q,r)$ in the index set
\begin{equation}
\cl{I}_n := \set{(p,q,r) \in \bb{Z}_{\geq 0}^3: p+q+r = n},
\end{equation}
where we observe data $X \sim p + \Bin(r,1/2)$, with estimand $(p-q)/n$. Let $\rho_n^*$ be the minimax risk of this binomial experiment, which we will refer to as the \textit{reduced model} throughout. Then:

\begin{theorem}
\label{thm:simplified_model_minimax_risk}
\begin{equation}
\label{eq:simplified_minimax_risk}
    r_n^* = \rho_n^* = \inf_f \sup_{\cl{I}_n} \E\left(f(p + \textsf{Bin}(r,1/2)) - \frac{p-q}{n} \right)^2.
\end{equation}
Moreover, there is a unique $f^*_n$ minimizing \eqref{eq:simplified_minimax_risk} and it is the posterior mean of $(p-q)/n$ under a least-favorable prior $\pi_n^*$ on $\cl{I}_n$. As a consequence, $f^*_n$ is admissible and $\rho_n^*$ is the Bayes risk of  $\pi_n^*.$
\end{theorem}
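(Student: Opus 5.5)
The proof has two parts: first the reduction $r_n^*=\rho_n^*$, then existence and uniqueness of the minimax rule together with its Bayes characterization.

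\textbf{Reduction.} For $\rho_n^*\ge r_n^*$: any $f$ for the reduced model gives the estimator $f(\sum_i S_i)$ in the full model. Its risk at $\Delta$ depends only on $(p,q,r)$, because $\sum_i S_i\sim p+\Bin(r,1/2)$. The two maximal risks therefore coincide.

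For $r_n^*\ge\rho_n^*$, I would use the invariance argument. The symmetric group acts on the parameters $\Delta$ and on the data $S$ by coordinate permutation, and the loss is invariant under this action. Given any estimator $g(S)$, I would form the symmetrized estimator $\bar g(S)=\frac{1}{n!}\sum_\sigma g(\sigma S)$.
\begin{itemize}
\item By convexity of squared loss and invariance, $\sup_\Delta R(\bar g,\Delta)\le\sup_\Delta R(g,\Delta)$.
\item A symmetric $\bar g$ on $\{0,1\}^n$ is a function of $\sum_i S_i$ alone.
\end{itemize}
Restricting to symmetric rules thus loses nothing, which gives the reverse inequality.

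\textbf{Existence and uniqueness.} The reduced problem is a finite game. The parameter set $\cl I_n$ is finite, and a rule $f$ is a vector in $\mathbb{R}^{n+1}$, since $X\in\{0,\dots,n\}$. Without loss of generality each $f(x)$ lies in $[-1,1]$, because clipping to $[-1,1]$ only reduces the loss. The risk $R(f,\theta)$ is convex and continuous in $f$ and linear in mixtures over $\theta$. The minimax theorem for a compact convex strategy set against a finite set (e.g.\ Sion's theorem) then applies. It gives
\[
\inf_f\sup_\theta R=\sup_\pi\inf_f r(\pi,f),
\]
and the supremum over the simplex of priors is attained at some $\pi_n^*$ by compactness and continuity of the Bayes risk. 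A minimax $f$ exists by compactness.

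Next I would show that any minimax $f$ is Bayes for $\pi_n^*$. We have
\[
r(\pi_n^*,f)\le\sup_\theta R(f,\theta)=\rho_n^*=\inf_g r(\pi_n^*,g),
\]
so $f$ attains the Bayes risk and equals the posterior mean on every $x$ with positive marginal probability under $\pi_n^*$.

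\textbf{Main obstacle.} The key step is showing that every $x\in\{0,\dots,n\}$ has positive marginal probability under $\pi_n^*$. This is what pins $f$ down uniquely and gives admissibility, since a unique Bayes rule is admissible. I would argue by contradiction.
\begin{itemize}
\item Suppose the marginal of $x_0$ is zero. Then every $\theta$ in the support of $\pi_n^*$ has $r=0$ or otherwise cannot reach $x_0$, i.e.\ $x_0\notin[p,p+r]$.
\item The value $f(x_0)$ is then unconstrained by Bayes optimality.
\item I would exhibit a point $\theta'$, for example the point with $r=n$, whose data reach every $x$ with positive probability.
\item Moving a small amount of prior mass onto $\theta'$ strictly increases the Bayes risk, contradicting least-favorability.
\end{itemize}
The increase would be shown from the first-order variation of the Bayes risk, which is $R(f_{\pi^*},\theta')-\rho_n^*$ minimized over the free choice of $f(x_0)$. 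This derivative must be $\le0$ for every free choice of $f(x_0)$. But the risk at $\theta'$ is a strictly convex, unbounded quadratic in $f(x_0)$, because $P_{\theta'}(x_0)>0$. Choosing $f(x_0)$ far away makes it exceed $\rho_n^*$, while that choice remains Bayes for $\pi_n^*$. This contradicts the equalizer property of the least-favorable pair: every Bayes rule $g$ for $\pi_n^*$ has $\sup_\theta R(g,\theta)\ge\rho_n^*$, and equality must hold on the support of $\pi_n^*$.

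This last step is delicate and needs care. The cleaner route is to argue through the game: at the saddle point, the minimax rule must be a best response. If some $x_0$ has zero marginal, the minimax player could vary $f(x_0)$ freely, and that would conflict with strict convexity at points outside the support.

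Once full support of the marginal is established, $f_n^*$ is the unique Bayes rule for $\pi_n^*$, hence unique minimax and admissible, with $\rho_n^*$ equal to the Bayes risk of $\pi_n^*$.
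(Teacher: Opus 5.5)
The reduction $r_n^*=\rho_n^*$, the use of Sion's theorem, and the identification of any minimax rule with the $\pi_n^*$-posterior mean wherever the $\pi_n^*$-marginal of $X$ is positive are all correct and match the paper. The gap is the full-support step: your perturbation argument there does not work.

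For $\pi_\epsilon=(1-\epsilon)\pi_n^*+\epsilon\delta_{\theta'}$, the right derivative at $\epsilon=0$ of the concave Bayes risk is $\min_g R(g,\theta')-\rho_n^*$, where the minimum runs over Bayes rules $g$ for $\pi_n^*$. Least-favorability only requires this minimum to be $\le 0$. It does not require $R(g,\theta')\le\rho_n^*$ for every free choice of $g(x_0)$. Since $f_n^*$ is itself such a Bayes rule with $R(f_n^*,\theta')\le\rho_n^*$, the condition holds automatically. A Bayes rule with $g(x_0)$ far away simply fails to be minimax, which contradicts nothing. The equalizer property on the support of $\pi_n^*$ is also unaffected, because no support point charges $x_0$.

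In fact, no argument using only that some $\theta'$ charges every $x$, with risk strictly convex in $f(x_0)$, can succeed. Take $\Theta=\{a,b,c\}$, $\mathcal{X}=\{0,1\}$, estimands $-1,1,0$, $P_a=P_b=\delta_0$, and $P_c$ uniform on $\{0,1\}$. Then:
\begin{itemize}
\item the unique least-favorable prior is $(1/2,1/2,0)$, and its marginal misses $x=1$;
\item every rule with $f(0)=0$ and $f(1)\in[-1,1]$ is minimax, so the minimax rule is not unique;
\item moving mass $\epsilon$ onto $c$ lowers the Bayes risk to $1-\epsilon$.
\end{itemize}
The same example defeats your alternative saddle-point route.

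What is missing is a problem-specific reason why a prior whose marginal vanishes at some $x$ has Bayes risk strictly below $\rho_n^*$. The paper supplies one. Such a prior is supported on $\{p\ge x+1\}\cup\{q\ge n-x+1\}$. On these two pieces $X$ ranges over the disjoint sets $\{x+1,\dots,n\}$ and $\{0,\dots,x-1\}$, so the minimax risk over the union is the larger of the two separate minimax risks. Shifting $p$ (respectively $q$) identifies these subproblems with copies of the reduced model of sizes $n-x-1$ and $x-1$. In terms of the unnormalized risks $M_k^*=k^2\rho_k^*$, this gives $M_n^*\le\max(M_{x-1}^*,M_{n-x-1}^*)$. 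That contradicts strict monotonicity of $M_k^*$. Monotonicity follows from superadditivity $M_m^*+M_l^*\le M_{m+l}^*$ together with $M_1^*>0$. Superadditivity is proved by adding independent draws from least-favorable priors of sizes $m$ and $l$ and using $\mathbb{E}\,\mathrm{Var}(P-Q\mid X^{(1)}+X^{(2)})\ge\mathbb{E}\,\mathrm{Var}(P-Q\mid X^{(1)},X^{(2)})$.
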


Although the minimax estimator $f_n^*$ is unique, the least favorable prior need not be unique. Computing the minimax optimal estimator and least-favorable prior in the reduced model can be done using convex optimization for moderate values of $n$ below a few hundred. In contrast, directly attempting to solve \eqref{eq:minimax_risk_joint_estimator_design} is computationally prohibitive. Figure \ref{fig:numerical_solutions_minimax} shows the numerical solutions $f_n^*$ to \eqref{eq:simplified_minimax_risk}, rewritten as functions of the natural unbiased estimator $(2\sum_{i=1}^n S_i - n)/n$. It is clear from the right-hand panel of the figure that the optimal minimax estimators nonlinearly shrink the unbiased estimator $(2\sum_{i=1}^n S_i - n)/n$ towards zero.

\begin{figure}
    \centering
    \includegraphics[width=\linewidth]{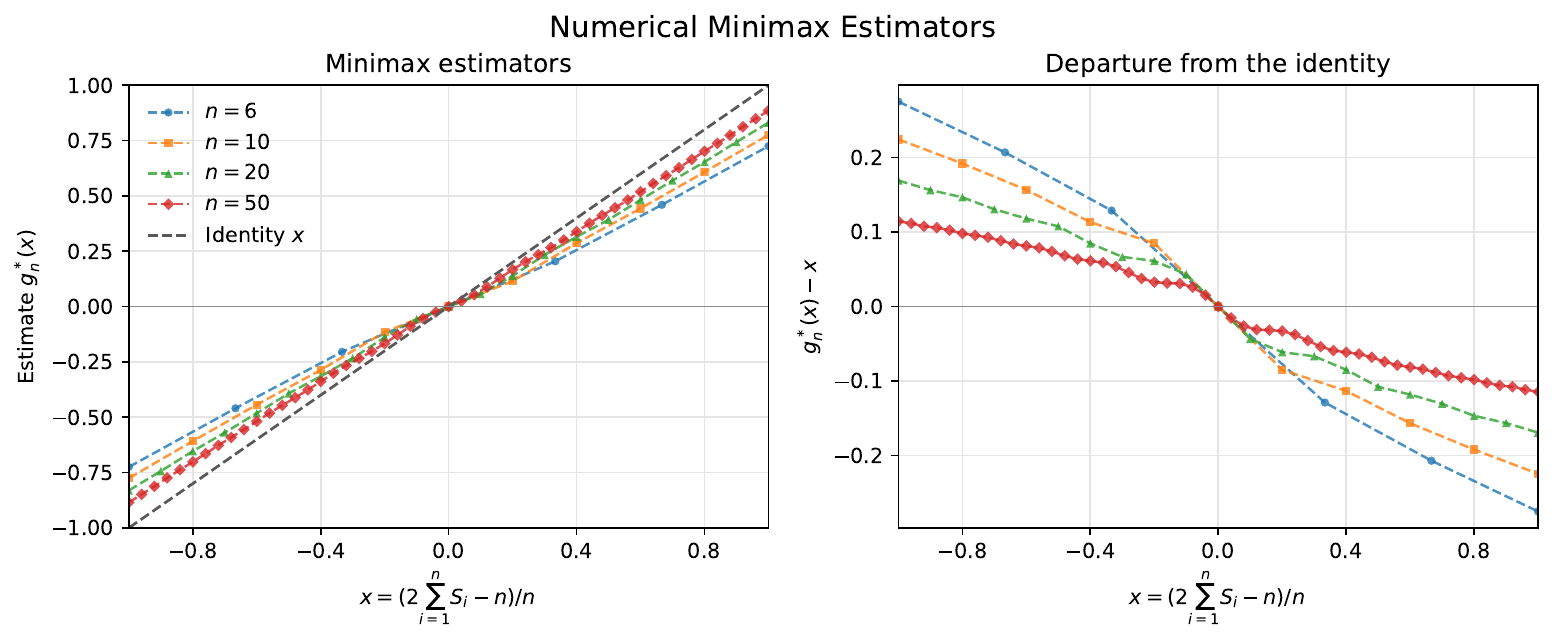}
    \caption{Numerical solutions of the minimax estimator for various values of $n$, displayed as functions of $(2\sum_{i=1}^n S_i - n)/n$. In particular, $g_n^*(x) := f_n^*\left( n(x + 1)/2\right)$ is plotted. The right panel shows deviation from the identity and emphasizes the nonlinear shrinkage.}
    \label{fig:numerical_solutions_minimax}
\end{figure}

The equivalence between the reduced model and randomized experiments is formally made by treating the transformed data $A_iY_i + (1 - A_i)(1 - Y_i)$ as the observations $S_i$ and interpreting $\Delta_i$ as the ITE $Y_i(1) - Y_i(0)$. These quantities $S$ have been considered before in \cite{jaskowski2012uplift}, where they are called \textit{class-variable transformations} and utilized for heterogeneous treatment effect targeting.

\begin{theorem}
\label{thm:minimax_risk_reduction}
Let $f_n^*$ be the function minimizing \eqref{eq:simplified_minimax_risk} in the reduced model. Then
\begin{equation}
\label{eq:minimax_risk_is_equal_to_simplified_minimax_risk}
\inf_{\cl{A}, \hat \tau} \sup_{\cl{P} \in \set{0,1}^{2n}} R_n(\cl{A},\hat \tau) = \rho_n^*.
\end{equation}
A minimax procedure achieving the upper bound is given by Bernoulli$(1/2)$ randomization with corresponding estimator $\hat{\tau}_{\Opt}=f^*_n(\sum_{i=1}^n S_i)$. Furthermore, this procedure is admissible.
\end{theorem}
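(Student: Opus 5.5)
The proof has two halves. The upper bound comes from exhibiting a procedure, and the lower bound comes from a least-favorable prior over potential-outcome configurations that defeats every design simultaneously. Admissibility is then deduced from the uniqueness statement in Theorem \ref{thm:simplified_model_minimax_risk}.

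\emph{Upper bound.} Under Bernoulli$(1/2)$ randomization, set $S_i = A_iY_i + (1-A_i)(1-Y_i)$.
\begin{itemize}
\item If $\Delta_i = Y_i(1)-Y_i(0) = 1$, then $Y_i(1)=1$ and $Y_i(0)=0$, so $S_i = 1$ whatever $A_i$ is.
\item If $\Delta_i = -1$, then $S_i = 0$ deterministically.
\item If $\Delta_i = 0$, then $Y_i(1)=Y_i(0)=y$, so $S_i = A_i y + (1-A_i)(1-y)$. This equals $A_i$ or $1-A_i$, and in either case it is $\textsf{Ber}(1/2)$.
\end{itemize}
Independence of the $A_i$ gives independence of the $S_i$. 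Hence the law of $(S_1,\dots,S_n)$ is exactly model \eqref{eq:S_definition} with $\tau(\Delta)$ equal to the sample ATE. It follows that $\sum_i S_i \sim p + \Bin(r,1/2)$, and the risk of $\hat\tau_{\Opt} = f_n^*(\sum_i S_i)$ on configuration $\cl{P}$ equals the reduced-model risk at $(p,q,r)$. Taking the supremum gives a maximum risk of $\rho_n^*$.

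\emph{Lower bound.} Fix an arbitrary procedure $(\cl{A},\hat\tau)$. Take a least-favorable prior $\pi_n^*$ on $\cl{I}_n$ from Theorem \ref{thm:simplified_model_minimax_risk} and lift it to a prior $\Pi$ on potential outcomes in three steps:
\begin{enumerate}
\item Draw $(p,q,r) \sim \pi_n^*$.
\item Assign the labels $+1,-1,0$ to units by a uniformly random permutation; in fact the argument below does not need the permutation.
\item For each unit with $\Delta_i = 1$ set $(Y_i(1),Y_i(0)) = (1,0)$; for $\Delta_i=-1$ set $(0,1)$; for $\Delta_i = 0$ set $(y,y)$ with $y \sim \textsf{Ber}(1/2)$ drawn independently.
\end{enumerate}
The key claim is that for \emph{every} fixed realization $a$ of the treatment vector, the observed $Y_i$ under $\Pi$ are distributed given $\Delta$ as $S_i$ in \eqref{eq:S_definition}, up to a known relabeling. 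When $a_i=1$, $Y_i = Y_i(1)$ equals $1$, $0$, or $\textsf{Ber}(1/2)$ for $\Delta_i = 1,-1,0$ respectively. When $a_i=0$, $Y_i = Y_i(0)$ gives the mirror image, so $1-Y_i$ has the $S_i$ law.

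Thus, conditional on $A=a$, the statistician observes $(S_1,\dots,S_n)$ with $S_i = a_iY_i+(1-a_i)(1-Y_i)$, a bijective function of $Y$ given $a$, drawn from the model of \eqref{eq:S_definition} with $\Delta$ drawn from the lifted prior. The Bayes risk of $\hat\tau(a,\cdot)$ is therefore at least the Bayes risk of the $\Delta$-prior in the $S$-model. By the invariance reduction (sufficiency of $\sum S_i$ under the exchangeable prior, or directly $r_n^*=\rho_n^*$) this is at least the Bayes risk of $\pi_n^*$, which equals $\rho_n^*$. Here I would check that symmetrizing the prior over permutations keeps the Bayes risk in the $S$-model equal to that of $\pi_n^*$ in the reduced model. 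This holds because the permutation-symmetric prior makes $\sum S_i$ sufficient for $(p,q,r)$, and the posterior mean depends only on it.

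Averaging over $a \sim \cl{A}$ gives $\sup_{\cl{P}} R_n \ge \E_\Pi R_n \ge \rho_n^*$. Combined with the upper bound, this proves \eqref{eq:minimax_risk_is_equal_to_simplified_minimax_risk}.

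\emph{Admissibility.} This is the main obstacle, since domination must be ruled out against arbitrary designs and estimators, including randomized ones. Suppose $(\cl{A},\hat\tau)$ weakly dominates $(\BRE,\hat\tau_{\Opt})$. Then its $\Pi$-Bayes risk is at most $\rho_n^*$, so for every $a$ in the support of $\cl{A}$ the chain of inequalities above is tight. In that case $\hat\tau(a,\cdot)$, viewed as a function of $S$, must be Bayes for the lifted prior in the $S$-model. Bayes estimators are unique on the set of data with positive marginal probability, so on that set $\hat\tau(a,\cdot)$ must equal the posterior mean, which is $f_n^*(\sum S_i)$.

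The delicate point is data outside the support of the marginal under $\pi_n^*$. I would first try to show that $\pi_n^*$ gives every value of $\sum_i S_i \in \{0,\dots,n\}$ positive marginal mass, for example by noting that the support must include points with $r>0$, or else the risk would fall below $\rho_n^*$. If that holds, $\hat\tau(a,\cdot)$ agrees with $\hat\tau_{\Opt}$ everywhere for every design realization. Because the $S$-model law is the same under every design, both procedures then have identical risk at every $\cl{P}$, so no strict domination is possible. If the positivity step fails, I would instead invoke the uniqueness of $f_n^*$ from Theorem \ref{thm:simplified_model_minimax_risk}: a dominating procedure, applied conditionally on each $a$, induces a minimax estimator in the reduced model, which uniqueness forces to equal $f_n^*$.
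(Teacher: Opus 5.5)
Your upper and lower bounds are correct and essentially follow the paper. Lifting $\pi_n^*$ by a random labeling is equivalent to the paper's use of a least-favorable prior on $\Delta$ together with $r_n^*=\rho_n^*$. Contrary to your parenthetical, the permutation is not optional. With a deterministic labeling the full vector $S$ can be strictly more informative than $\sum_i S_i$. For example, it reveals $\Delta_1$ when the prior charges only $(1,0,n-1)$ and $(0,1,n-1)$. Your argument does end up using the symmetric prior, so this is harmless.

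\textbf{Main gap: the last step of the admissibility argument.} You claim that ``because the $S$-model law is the same under every design, both procedures have identical risk at every $\cl{P}$.'' This is false. At a \emph{fixed} configuration, $S_i$ equals $A_i$ or $1-A_i$ on units with $\Delta_i=0$, so the law of $\sum_i S_i$ depends on the design. The independence $S\perp A$ holds only after averaging over the $(0,0)/(1,1)$ coin flips of $P_\Delta$. For instance, put every unit at $(1,1)$ with $n$ even. Under \CRE\ we get $\sum_i S_i\equiv n/2$ and risk $f_n^*(n/2)^2=0$, using the symmetry $f_n^*(x)=-f_n^*(n-x)$ implied by uniqueness. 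Under \BRE\ the risk is $\E f_n^*(\Bin(n,1/2))^2>0$.

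So showing that a competitor must use $f_n^*(\sum_i S_i)$ does not by itself exclude domination. To finish, average the domination inequality over $P_{\Delta_0}$:
\begin{itemize}
\item The risk of $(\BRE,\Opt)$ is constant over configurations sharing $(p,q,r)$.
\item Any design paired with $f_n^*(\sum_i S_i)$ has $P_{\Delta_0}$-averaged risk equal to that same constant, because $S\perp A$ under $P_{\Delta_0}\times\cl{A}$.
\item Strictness at $\cl{P}_0$, which has positive $P_{\Delta_0}$-mass, would make this average strictly smaller, a contradiction.
\end{itemize}

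\textbf{Both routes for identifying the competitor's estimator are incomplete.}

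\emph{Positivity.} That $\Prob_{\pi_n^*}(X=x)>0$ for every $x$ does not follow from the support of $\pi_n^*$ containing points with $r>0$. The paper proves it inside Theorem~\ref{thm:simplified_model_minimax_risk} by contradiction, using strict monotonicity of $M_n^*=n^2\rho_n^*$ via superadditivity. You would at least need to extract it from that theorem's assertion that $f_n^*$ is the posterior mean at every $x$.

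\emph{Fallback.} Conditioning on a fixed $a$ fails. The $S$-model risk of $\hat\tau(a,\cdot)$ at a given $\Delta$ need not be bounded by that of $f_n^*$, because domination (after averaging over $P_\Delta$) controls only the $\cl{A}$-average of these conditional risks. Tightness of the Bayes risk pins $\hat\tau(a,\cdot)$ down only on the marginal support, which does not yield minimaxity.

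The missing device, which is the paper's entire admissibility proof, is to average over $A$ rather than condition on it:
\begin{enumerate}
\item After averaging over $P_\Delta$, use $S\perp A\mid\Delta$ and Jensen to pass to $\bar\tau(s)=\E_{\cl{A}}\hat\tau(A,s)$.
\item This $\bar\tau$ is no worse than $f_n^*$ at every $\Delta$ and strictly better at $\Delta_0$.
\item Symmetrizing over permutations gives a minimax estimator in the reduced model that is strictly better at $(p_0,q_0,r_0)$.
\item This contradicts the uniqueness in Theorem~\ref{thm:simplified_model_minimax_risk}.
\end{enumerate}
This route needs neither positivity nor per-$a$ tightness.
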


We will refer to this minimax admissible procedure as (\BRE, \Opt). A proof sketch of Eq. \eqref{eq:minimax_risk_is_equal_to_simplified_minimax_risk} is instructive. The idea for the lower bound is to consider a particular prior on the potential outcomes $Y_i(1),Y_i(0)$, which exactly mirrors the reduced model. For any fixed vector $\Delta \in \set{-1,0,1}^n$, generate $(Y_i(1),Y_i(0))$ according to 
\begin{equation}
\label{eq:bayes_model}
(Y_i(1),Y_i(0)) = 
\begin{cases}
(1,0) & \text{ if } \Delta_i = 1 \\
(0,1) & \text{ if } \Delta_i = -1 \\
(0,0) \text{ or } (1,1) \text{ w.p. } 1/2 & \text{ if } \Delta_i = 0.
\end{cases}
\end{equation}
The crucial property is that the transformed quantities $S_i = A_iY_i(1) + (1 - A_i)(1-Y_i(0))$ are independent of the design $A$, under this model. There is a bijection between $(A,Y)$ and $(A,S)$, and so estimators which depend only on $S$ may be considered. The optimal estimator is then the posterior mean of $\tau,$ which equals $\frac{1}{n}\sum_{i=1}^n \Delta_i$, given the data $S$.  This is the Bayes risk of the reduced model under a particular prior on $\Delta$. Choosing a least-favorable prior and applying a finite minimax theorem shows the lower bound. The upper bound is achieved simply by taking the $\BRE(1/2)$ design, under which the quantities $S_i$ have exactly the law in \eqref{eq:S_definition}.

Although the setting of binary outcomes is already highly relevant in causal inference \cite{ding2024first, imbens2015causal}, the results can be extended further to bounded potential outcomes. The next result characterizes the minimax risk when the potential outcomes are known to lie in an interval $[L,U].$ 

\begin{theorem}
\label{thm:minimax_risk_reduction_bounded_case}
For any finite $U > L$, we have
\begin{equation}
\inf_{\cl{A}, \hat \tau} \sup_{\cl{P} \in [L,U]^{2n}} R_n(\cl{A},\hat \tau) = (U - L)^2r_n^*.
\end{equation}
A minimax optimal procedure is $\BRE(1/2)$ with the estimator
$\E\left[f^*_n\left(\sum_{i=1}^n B_i\right) \mid S \right], B_i \stackrel{ind}{\sim} \Ber(S_i).$
\end{theorem}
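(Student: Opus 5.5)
By affine invariance I will first reduce to $[L,U]=[0,1]$. Indeed, if $Y_i(a)=L+(U-L)\tilde Y_i(a)$, then $\tau=(U-L)\tilde\tau$. Any procedure for one scale maps to a procedure for the other, and its risk scales by $(U-L)^2$. So it suffices to show that the minimax risk over $[0,1]^{2n}$ equals $r_n^*=\rho_n^*$, and that the stated procedure attains it. The lower bound is then immediate. Since $\set{0,1}^{2n}\subset[0,1]^{2n}$, the supremum over the larger set is at least the supremum over binary configurations, for every procedure. Theorem~\ref{thm:minimax_risk_reduction} therefore gives $\inf\sup_{[0,1]^{2n}}R_n\ge\rho_n^*$.

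For the upper bound I use $\BRE(1/2)$ with $S_i=A_iY_i+(1-A_i)(1-Y_i)\in[0,1]$ and $\hat\tau=\E[f_n^*(\sum_i B_i)\mid S]$, where $B_i\sim\Ber(S_i)$ independently given $S$. The key step is a randomized rounding of the potential outcomes. Given a configuration $\cl{P}\in[0,1]^{2n}$, draw independent $\tilde Y_i(1)\sim\Ber(Y_i(1))$ and $\tilde Y_i(0)\sim\Ber(Y_i(0))$, independent of $A$. Write $\tilde S_i=A_i\tilde Y_i(1)+(1-A_i)(1-\tilde Y_i(0))$. Conditional on $(A,\cl{P})$, the $\tilde S_i$ are independent with $\tilde S_i\sim\Ber(S_i)$, since $\Pr(\tilde S_i=1\mid A)$ equals $Y_i(1)$ if $A_i=1$ and $1-Y_i(0)$ if $A_i=0$. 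So $(A,\tilde S)$ has the same joint law as $(A,B)$, and $\hat\tau=\E[f_n^*(\sum\tilde S_i)\mid A,\cl{P}]$.

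Let $\tilde\tau=\frac1n\sum(\tilde Y_i(1)-\tilde Y_i(0))$, so that $\E[\tilde\tau]=\tau$. Because $\hat\tau$ is a conditional expectation given $A$ and the rounding is independent of $A$, Jensen's inequality yields
\[
\E(\hat\tau-\tau)^2=\E\bigl(\E[f_n^*(\textstyle\sum\tilde S_i)-\tilde\tau\mid A]+\E[\tilde\tau]-\tau\bigr)^2\le\E\bigl(f_n^*(\textstyle\sum\tilde S_i)-\tilde\tau\bigr)^2 .
\]
The middle equality needs $\E[\tilde\tau\mid A]=\E\tilde\tau=\tau$, which holds by independence. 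The right-hand side averages, over the rounding, the risk of (\BRE, \Opt) at the binary configuration $\tilde{\cl P}$. Under $\BRE(1/2)$ with binary outcomes, $\sum\tilde S_i\sim p+\Bin(r,1/2)$, so that risk is at most $\sup_{\cl I_n}\E(f_n^*(X)-(p-q)/n)^2=\rho_n^*$ by Theorem~\ref{thm:simplified_model_minimax_risk}. Averaging over the rounding preserves the bound, which gives the upper bound and shows that the procedure is minimax.

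The step I expect to need the most care is the Jensen step. I must check that, after conditioning on $A$, the estimator depends only on the conditional law of $\tilde S$ given $A$ and not on the particular coupling. I must also check that $\tau$ is exactly the conditional mean of $\tilde\tau$ given $A$. Both follow from the rounding being independent of the design and coordinatewise independent. Finally, rescaling back to $[L,U]$ gives the factor $(U-L)^2$ and the estimator $L$-shifted accordingly; since $\tau$ is a difference, the shift cancels, and only the scaling of $Y$ into $S$ matters.
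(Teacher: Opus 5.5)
Your proposal is correct and follows essentially the same route as the paper: rescale to $[0,1]$, get the lower bound by restricting to binary configurations, and get the upper bound by independently rounding $Y_i(a)$ to $\Ber(Y_i(a))$ outcomes. This rounding represents the estimator as a conditional expectation given $A$ with $\E[\tilde\tau\mid A]=\tau$, and Jensen plus the binary-case bound $\rho_n^*$ then finish the argument. One small wording point: the shift by $L$ does not cancel when forming $S_i$, since it is needed to make $S_i\in[0,1]$; it only cancels in the estimand.
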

If the outcomes are unbounded, the theorem immediately implies that the worst-case risk is infinite.


\section{Analysis of Minimax Risk}
\label{sec:second_order_risk}

Figure \ref{fig:minimax_risk_curves} shows the curious fact that $n\rho_n^*$ increases rather slowly to its limiting value of $1$. The next theorem, our main result, provides an explanation by expanding the minimax risk up to second order. It also provides insight into the form of the optimal minimax estimator and the associated least-favorable prior.

\begin{theorem}
\label{thm:second_order_minimax_risk_behavior}
Let $a_1'$ be the largest negative zero of the derivative of the Airy function. The minimax risk satisfies
\[
\rho_n^* = \frac{1}{n} - \frac{C_A}{n^{4/3}} + o_n(n^{-4/3}),
\]
with $C_A = -4^{1/3}a_1' = 1.617\dots$.
\end{theorem}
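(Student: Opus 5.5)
We prove the expansion through matching upper and lower bounds on $\rho_n^*$, working in the reduced model of Theorem~\ref{thm:simplified_model_minimax_risk}. It helps to reparametrize. Write $\theta=(p-q)/n\in[-1,1]$ and $Z=(2X-n)/n$. Then $\E Z=\theta$ and $\mathrm{Var}(Z)=r/n^2=(1-|\theta|-2\min(p,q)/n)/n$. The variance is therefore largest when $\min(p,q)=0$, so the hardest configurations are $r=n(1-|\theta|)$, where $\mathrm{Var}(Z)=(1-|\theta|)/n$. The unbiased estimator $Z$ has maximum risk exactly $1/n$, attained at $\theta=0$. The second-order gain comes from the fact that the variance $(1-|\theta|)/n$ drops linearly as $|\theta|$ moves away from $0$. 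So near $\theta=0$ we face a local Gaussian problem with location $\theta$ and variance $(1-|\theta|)/n$. The relevant scale is set by balancing the bias cost against the variance reduction, $|\theta|\cdot\frac1n\asymp$ (curvature of the prior), and this yields $|\theta|\asymp n^{-1/3}$. We therefore rescale $\theta=n^{-1/3}t$.

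\textbf{Lower bound.} We use Bayes risks of priors supported on $\{\min(p,q)=0\}$ with densities $g(t)^2$ on the scale $n^{-1/3}t$. Under such a prior the observation is, locally, $Z\approx\theta+n^{-1/2}(1-|\theta|)^{1/2}\xi$ up to a local CLT/Edgeworth error. This error must be shown to be $o(n^{-4/3})$ for smooth compactly supported $g$; we would obtain it via a Le Cam/strong approximation of the binomial at this scale. For the location model with variance $\sigma^2(\theta)$, van Trees' inequality (Bayesian Cramér--Rao) gives
\[
\text{Bayes risk}\ \ge\ \frac{1}{\E_\pi[\sigma^{-2}(\theta)]+I(\pi)}.
\]
With $\sigma^{-2}=n/(1-|\theta|)\approx n(1+|\theta|)$ and $I(\pi)=4n^{2/3}\int g'^2$, expansion gives the bound
\[
\frac1n-n^{-4/3}\int\bigl(|t|g^2-4g'^2\bigr)+o(n^{-4/3}).
\]
We then optimize over $g$, and this is Levit's device. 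Maximizing $\int(|t|g^2-4g'^2)$ over even $g$ with $\int g^2=1$ is a variational problem. Its Euler equation $4g''+|t|g=-\lambda g$ is an Airy equation after the change of variables $t=4^{1/3}s$. Evenness forces $g'(0)=0$. Together with decay at infinity this makes $-\lambda/4^{1/3}$ correspond to a zero $a_1'$ of $\mathrm{Ai}'$; we need to check the sign conventions here, since the potential is confining in the variable $-|t|$. The largest such zero gives the constant $C_A=-4^{1/3}a_1'$, possibly after absorbing the sign of $|t|$ by reflecting. We truncate the Airy eigenfunction to compact support, which loses only $\varepsilon$ in the constant. Any compact-support prior is admissible because $|\theta|\lesssim n^{-1/3}\ll1$.

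\textbf{Upper bound.} We take the Bayes estimator under the corresponding (truncated) prior, or more concretely
\[
\hat\theta(Z)=Z+n^{-2/3}\,\psi\bigl(n^{1/3}Z\bigr),\qquad \psi=4\,(\log g)'\ \text{(suitably scaled)}.
\]
This is the Stein/Tweedie form, in which the risk equals the unbiased-risk-estimate formula $\frac{1-|\theta|}{n}+n^{-4/3}\E[\psi^2+2\psi']+\dots$. Near $\theta=0$, the choice of $\psi$ from the Airy eigenfunction makes the local risk equal to $1/n-C_An^{-4/3}+o(n^{-4/3})$ uniformly in $|t|\le K$; this is the equalizer property of the eigenfunction. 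Where $g$ decays, $\psi$ grows linearly, so the shrinkage is bounded by a cap. For $|t|>K$ the variance reduction $|\theta|/n$ exceeds $K n^{-4/3}$, which dominates the bounded shrinkage bias once $K$ is large. For $\min(p,q)>0$ the variance is only smaller. Finally, the binomial-to-Gaussian discrepancy in the risk formula has to be controlled uniformly in $\theta$, via discrete Stein identities for the binomial.

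\textbf{Main obstacle.} The main difficulty is making the upper bound uniform: we need a single estimator whose risk is $\le 1/n-C_An^{-4/3}+o(n^{-4/3})$ simultaneously over all $(p,q,r)$. This requires handling the nonsmooth $|\theta|$ at $0$ (the kink in the potential is exactly what produces Airy rather than Hermite functions) and matching the ODE solution to the shrinkage cap at the transition region $|t|\approx K$. We would first attempt a direct discrete Stein-identity computation for $X\sim p+\Bin(r,1/2)$, which gives an exact risk formula, and only then pass to the Airy limit.
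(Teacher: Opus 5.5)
Your strategy is the paper's: an $n^{-2/3}$-scale Stein/Tweedie perturbation of $X/n$ built from the Airy ground state for the upper bound, and van Trees with prior $\propto\phi_A^2$ at scale $n^{2/3}$ on the boundary $r=n-|p-q|$ for the lower bound. But the lower-bound computation fails as written, because your expansion has the wrong sign on $4g'^2$.

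In van Trees the prior information adds to the sampling information. The total information is therefore $n+n^{2/3}\int(|t|g^2+4g'^2)$, and the bound is $\frac1n-n^{-4/3}\int(4g'^2+|t|g^2)+o(n^{-4/3})$. You must \emph{minimize} this energy, i.e.\ take the ground state of $-4\frac{d^2}{dt^2}+|t|$. Your functional $\int(|t|g^2-4g'^2)$ is unbounded above and below on $\{\int g^2=1\}$. Your Euler equation $4g''+|t|g=-\lambda g$ has the anti-confining potential $-|t|$, whose solutions oscillate and decay only like $|t|^{-1/4}$. So there is no $L^2$ eigenfunction, and no reflection or change of variables repairs this. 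The correct equation $-4g''+|t|g=\lambda g$, with $g'(0)=0$ and decay, gives $\lambda=C_A$.

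Independently, your route needs two things you do not supply. First, a Gaussian approximation of the binomial accurate to $o(n^{-4/3})$, i.e.\ relative error $o(n^{-1/3})$. Second, some way to apply a continuous van Trees inequality when the parameter $p-q$ is integer-valued. The paper avoids both with a discrete van Trees inequality on the binomial family itself, using the discrete score $(\rho_{k+1}P_{k+1}-\rho_kP_k)/(\rho_kP_k)$. This yields an exact information $I_n(\rho)$ whose two pieces are $4n^{-4/3}\int\phi_A'^2+o(n^{-4/3})$ and $\frac1n+n^{-4/3}\int t\phi_A^2+o(n^{-4/3})$.

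For the upper bound, the ``main obstacle'' you name comes from your choice to truncate. The right perturbation is $\psi=2(\log\phi_A)'=-h_A$, for which $\psi^2+2\psi'=4\phi_A''/\phi_A=|t|-C_A$; your factor $4$ breaks this identity. Also, $\psi$ grows like $\sqrt{|t|}$, not linearly, and $|h_A'|\lesssim(1+\sqrt{|t|})^{-1}$. Both $h_A^2$ and $h_A'$ are globally Lipschitz, and the Riccati identity $h_A^2-2h_A'-|t|=-C_A$ holds for \emph{every} $t$; the kink of $|t|$ at $0$ is absorbed by $\mathrm{Ai}'(a_1')=0$.

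So nothing needs truncating. The exact Rademacher Stein identity gives risk $\frac{r}{n^2}(1-2h_A'(\tilde\theta)n^{-1/3})+h_A(\tilde\theta)^2n^{-4/3}+O(n^{-3/2})$ uniformly, with $\tilde\theta=(p-q)/n^{2/3}$. Maximizing over $r\le n-|p-q|$ then gives $\frac1n-C_An^{-4/3}+O(n^{-3/2})$.

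Your cap argument is wrong as stated. A cap matched to $\psi$ at $|t|=K$ has level $B$ with $B^2=K-C_A+2h_A'(K)$. Just beyond $K$, the second-order coefficient $B^2-|t|$ therefore exceeds $-C_A$ by $2h_A'(K)>0$, so the variance reduction does not dominate the bias. You recover only $C_A-O(K^{-1/2})$ and need an extra $K\to\infty$ limit; with the linear growth you assumed, the argument would fail outright. Finally, the Bayes rule for a compactly supported prior is not uniformly good either: for $|t|\gg K$ it incurs bias of order $(|t|-K)n^{-1/3}$, far above $n^{-1/2}$.
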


To give some intuition for the proof, we first reparametrize the model of Section \ref{sec:reduction_to_simplified_model} by considering $2\sum_i S_i - n = \sum_{i=1}^r \e_i + (p-q)$ with $\e_i$ i.i.d. Rademacher $1/2$ random variables. Let $\theta := (p-q)$. Then $\rho_n^*$ is equivalent to the minimax risk of estimating $\theta/n$ on the basis of the observation $X = \left(\theta + \sum_{i=1}^r \e_i\right)$ with unknown parameters $(\theta,r)$ in the space
\begin{equation}
    \Theta_n := \set{(\theta,r): 0 \leq r \leq n, |\theta| + r \leq n, \theta + r \equiv n \mod 2}.
\end{equation}
For each $(\theta,r) \in \Theta_n$ there exists $p,q,r$ such that $p - q = \theta$ and $p,q,r \geq 0 , p + q + r = n,$ so this is indeed an equivalent parametrization.

In this model, it is not difficult to show that $\rho_n^* = (1 + o(1))/n$ with the natural unbiased estimator $X/n$ being first-order minimax with maximum risk $1/n$. Therefore, we expect estimators achieving minimax risk up to second order to be small perturbations of the estimator $X/n$. Consider the candidate estimator
\begin{equation}
\label{eq:second_order_minimax_estimator_h}
\delta_{n,h}(X) := \frac{X}{n} - n^{-\alpha} h(X/n^\alpha),
\end{equation}
for some function $h$ and $\alpha \geq 0$. To heuristically choose $h$ and $\alpha$, suppose $h$ is sufficiently well-behaved so that $h'$ and $h^2$ are Lipschitz. Then the risk $\delta_{n,h}(X)$ can be expanded by using Lemma \ref{lemma:stein_rademacher_sums}, a version of Stein's identity for i.i.d. sums of Rademacher random variables. Let $U_r = \sum_{i=1}^r \e_i$, $\tilde{\theta} = \theta/n^\alpha$. We have 
\begin{align*}
    \E[(\delta_{n,h}(X) - \theta/n)^2] & = \E\left[\left(\frac{U_r}{n} - n^{-\alpha} h(X/n^{\alpha})\right)^2 \right] \\
    & = \frac{r}{n^2} - 2n^{-\alpha - 1}\E[U_r h(\tilde \theta + U_r/n^\alpha)] + n^{-2\alpha}\E[h(\tilde \theta + U_r/n^\alpha)^2] \\
    & \approx \frac{r}{n^2} - 2rn^{-2\alpha - 1} \E[h'(\tilde{\theta} + U_{r}/n^{\alpha})] + n^{-2\alpha}h(\tilde \theta)^2 + O(n^{-3\alpha}\sqrt{r}) \\
    & = \frac{r}{n^2} - 2rn^{-2\alpha - 1} h'(\tilde \theta)+ n^{-2\alpha}h(\tilde \theta)^2 + O(n^{-3\alpha}\sqrt{r}).
\end{align*}
It is intuitively clear that the hardest parameters $(\theta,r)$ are those on the boundary $r = n - |\theta|.$ Thus, ignoring lower order terms, the worst-case risk is given by
\begin{equation}
\sup_{\tilde \theta} \frac{1}{n} - \frac{|\tilde \theta|}{n^{2-\alpha}} - \frac{2h'(\tilde \theta)}{n^{2\alpha}} + \frac{h(\tilde \theta)^2}{n^{2\alpha}}.
\end{equation}
The sharpest upper bound on the minimax risk is then obtained by solving the following problem:
\[
\frac{1}{n} + \inf_h \sup_{x} \left\lbrace \frac{h(x)^2 - 2h'(x)}{n^{2\alpha}} - \frac{|x|}{n^{2-\alpha}} \right \rbrace.
\]
For the latter term to remain second-order, we require $\alpha \in (1/2,1).$ For $\alpha < 2/3$, the second-order term in the minimax problem is $\inf_h \sup_x (h(x)^2 - 2h'(x))/n^{2\alpha}$. It can be shown that the solution is trivial and given by $h = 0.$ If $\alpha > 2/3$, the second-order term is $-|x|/n^{2-\alpha}$ and the perturbation is irrelevant. Therefore, a second-order risk improvement is obtained only when $\alpha = 2/3$, which yields
\begin{equation}
\frac{1}{n} + \inf_h \sup_{x} \left\lbrace \frac{h(x)^2 - 2h'(x) - |x|}{n^{4/3}} \right \rbrace,
\end{equation}
and the variational problem of choosing $h$ to minimize $M(h) := \sup_{x \in \bR} h(x)^2 - 2h'(x) - |x|.$ By integrating against smooth test functions $\phi(x)^2$ and completing the square, it can be shown that
\begin{equation}
\label{eq:variational_problem}
    M(h) \geq - \inf_{\norm{\phi}_2 = 1} \int_{\bR} 4\phi'(x)^2 + |x|\phi(x)^2 dx
\end{equation}
with equality if and only if $h(x) = -2\phi_A'(x)/\phi_A(x)$, where $\phi_A$ is the optimizer of the right-hand side. The quantity $\cl{E}(\phi) = \int_\bR 4\phi'(x)^2 + |x|\phi(x)^2$ is the quadratic form of the second-order differential operator $-4\frac{d^2}{dx^2} + |x|$. By the Rayleigh quotient characterization, the value of the variational problem $\inf_\phi \cl{E}(\phi)$ is the smallest eigenvalue of the differential operator. This turns out to be the constant $C_A$ in the statement of Theorem \ref{thm:second_order_minimax_risk_behavior}, so $M(h) \geq - C_A.$ The corresponding eigenfunction $\phi_A$ satisfies $-4\phi_A''(x) + |x|\phi_A(x) = C_A \phi_A(x),$ from which it can be shown that 
\begin{equation}
\phi_A(t) \propto \Ai\left( \frac{|t| - C_A}{4^{1/3}}\right).
\end{equation}
Thus, taking $h = -2\phi_A(x)'/\phi_A(x)$ in \eqref{eq:second_order_minimax_estimator_h} yields the candidate second-order minimax estimator. The lower bound establishing Theorem \ref{thm:second_order_minimax_risk_behavior} is obtained by considering priors that are discrete approximations of  $\phi_A(x)^2$. 

This tight interplay between second-order differential equations and variational problems appears throughout the literature on second-order minimax risk expansions. Essentially the same strategy was exploited for bounded normal mean estimation, pioneered primarily by a sequence of papers by Levit \cite{levit1981asymptotic, levit1983minimax, levit1986second, koranyi2002asymptotically} and also Bickel \cite{bickel1981minimax}. In this literature, the Gaussian structure is exploited by using Brown's identity to relate Bayes risk to prior information. In our proof, we instead use Van Trees' inequality, a softer tool. A similar interplay happens in \cite{johnstone1992minimax, Aslan2006} outside of the Gaussian setting.

The appearance of the Airy function and the slower $n^{-4/3}$ second-order term seems to be novel. The primary reason underlying this phenomenon is that the maximum variance $\Var(X/n) = (n - |\theta|)/n^2 = \frac{1}{n} - \frac{|\theta|}{n^2}$ at a fixed $\theta$ decays linearly in $|\theta|$. In estimation problems with bounded parameter spaces like the bounded normal means problem, the variance is typically constant.

\section{Comparison to Standard Procedures}
\label{sec:comparison_to_standard_methods}

In practice, the most widely used designs are the completely randomized experiment and Bernoulli randomized experiment \cite{ding2024first, imbens2015causal}. Recall that the \CRE \ selects $n_1$ of the $n$ units uniformly at random to be treated, and Bernoulli randomization (\BRE) assigns treatment via indicators $A_i$ drawn i.i.d. $\Ber(p)$.  We will focus on the \CRE \ where $n/2$ units are treated, and on \BRE s with treatment probability $1/2$. For simplicity throughout this section, we will assume $n$ is even. Two commonly used estimators are the difference-in-means and Horvitz-Thompson estimators
\begin{align}
\label{eq:dim}
 \hat \tau_{\DIM}(A,Y) & := \frac{\sum_{i=1}^n A_i Y_i}{\sum_{i=1}^n A_i} - \frac{\sum_{i=1}^n (1-A_i) Y_i}{\sum_{i=1}^n (1-A_i)}, \\
 \label{eq:horvitz_thompson}
 \hat \tau_{\text{\HT}}(A,Y) & := \frac{\sum_{i=1}^n A_i Y_i}{n/2} - \frac{\sum_{i=1}^n (1-A_i) Y_i}{n/2}.
\end{align}

In addition, we will consider the centered Horvitz-Thompson estimator \cHT, which has been studied in \cite{aronow2013class, aronow2026minimax}:

\begin{equation}
\label{eq:centered_horvitz_thompson}
\hat \tau_{\text{\cHT}}(A,Y) := \frac{\sum_{i=1}^n A_i (Y_i - 1/2)}{n/2} - \frac{\sum_{i=1}^n (1-A_i) (Y_i-1/2)}{n/2}.   
\end{equation}

$\hat \tau_{\text{\cHT}}$ is a special case of the family of estimators introduced in \cite{aronow2013class}. It appears in essentially the same form in the survey sampling literature \cite{aronow2026minimax}, where their form of $\hat \tau_{\text{\cHT}}$ is shown minimax optimal over design-unbiased estimators under independent sampling. The estimator arises naturally in our problem: a quick derivation shows that $\hat \tau_{\cHT}(A,Y) = \frac{1}{n}\left(2\sum_{i=1}^n S_i  - n\right)$, which is exactly the natural unbiased estimator in the reduced model of Section \ref{sec:reduction_to_simplified_model}. 

For the \CRE, $\cHT$ is equal to both $\DIM$ and $\HT$; in \BRE, they are distinct. For (\BRE, \DIM), we take the convention $\hat \tau_{\DIM} = 0$ if $\sum_i A_i \in \set{0,n}.$ Note that in the Bernoulli randomized experiment, \HT \ is known to be asymptotically less efficient than \DIM, but we retain it nonetheless. We will therefore consider four procedures: (\CRE, \DIM), (\BRE, \DIM), (\BRE, \cHT) and (\BRE, \HT). By computing their maximum risks, we can show that the three procedures $(\CRE,\DIM), (\BRE,\DIM), (\BRE,\cHT)$ are only minimax to first order, while $(\BRE,\HT)$ is not. 

\begin{proposition}[Maximum Risk]
\label{prop:maximum_risks}
Let $n$ be even. Then the following are true.
\begin{align*}
\sup_{\cl{P} \in \set{0,1}^{2n}} R_n(\CRE,\DIM) & = \frac{1}{n-1} \qquad 
&\sup_{\cl{P} \in \set{0,1}^{2n}} R_n(\BRE,\HT)  = \frac{4}{n} \\ 
\sup_{\cl{P} \in \set{0,1}^{2n}} R_n(\BRE,\DIM) & = \frac{1}{n} + \frac{2}{n^2} + O(n^{-3}) \qquad
&\sup_{\cl{P} \in \set{0,1}^{2n}} R_n(\BRE,\cHT) = \frac{1}{n}.
\end{align*}
\end{proposition}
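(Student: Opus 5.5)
We compute the four maximum risks separately. Each estimator is (conditionally) unbiased, so in each case the risk reduces to a variance that we can write explicitly and maximize over $\cl{P}\in\set{0,1}^{2n}$. Throughout, write $b_i := Y_i(1)+Y_i(0)\in\set{0,1,2}$, let $S_1^2,S_0^2,S_\tau^2$ be the finite-population variances (denominator $n-1$) of $Y(1),Y(0),Y(1)-Y(0)$, and note that with $n_1=n/2$,
\[
\hat\tau_{\DIM}=\tfrac{2}{n}\textstyle\sum_i A_i b_i-\tfrac{2}{n}\sum_i Y_i(0).
\]

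\textbf{Cases $(\BRE,\HT)$ and $(\BRE,\cHT)$.} These are sums of independent terms, and both estimators are unbiased under $\BRE(1/2)$. For \HT, the $i$-th summand is $\frac{2}{n}\bigl(A_iY_i(1)-(1-A_i)Y_i(0)\bigr)$, whose variance is $\frac{4}{n^2}\cdot\frac{b_i^2}{4}$. For \cHT\ the variance is $\frac{4}{n^2}\cdot\frac{(b_i-1)^2}{4}$. Summing over $i$, the risk of \HT\ is maximized when $b_i=2$ for all $i$, giving $4/n$. The risk of \cHT\ is maximized when every $b_i\in\set{0,2}$, giving $1/n$.

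\textbf{Case $(\CRE,\DIM)$.} Use the sampling-without-replacement variance formula
\[
\Var\Bigl(\textstyle\sum_i A_ib_i\Bigr)=\frac{m(n-m)}{n(n-1)}\sum_i(b_i-\bar b)^2 ,\qquad m=n/2,
\]
which gives the risk $\frac{1}{n(n-1)}\sum_i(b_i-\bar b)^2$. Over $b_i\in[0,2]$ the sum $\sum_i(b_i-\bar b)^2$ is at most $n$. This bound is attained exactly when half of the $b_i$ equal $0$ and half equal $2$, which is possible because $n$ is even. Hence the maximum risk is $1/(n-1)$.

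\textbf{Case $(\BRE,\DIM)$.} Condition on $N_1=\sum_i A_i$. Given $N_1=m$ with $1\le m\le n-1$, the design is a \CRE\ with $m$ treated units, so \DIM\ is conditionally unbiased with Neyman variance $S_1^2/m+S_0^2/(n-m)-S_\tau^2/n$. On the event $N_1\in\set{0,n}$ the error is $\tau^2\le1$, and this event has probability $2^{1-n}$. By symmetry of $\Bin(n,1/2)$, setting $a_n:=\E[N_1^{-1}\mathbf 1\{1\le N_1\le n-1\}]$,
\[
R_n=a_n\,(S_1^2+S_0^2)-\frac{S_\tau^2}{n}+O(2^{-n}).
\]
For binary outcomes $S_j^2\le \frac{n}{4(n-1)}$, and $S_\tau^2\ge 0$. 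All three bounds are attained simultaneously by taking $Y_i(1)=Y_i(0)$, equal to $1$ for half of the units and $0$ for the rest. That configuration gives $\tau=0$, so the $O(2^{-n})$ term vanishes there, and the same bounds give an upper bound for every configuration. Therefore the maximum risk is $a_n\frac{n}{2(n-1)}+O(2^{-n})$.

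It remains to expand $a_n$. Expand $1/N_1$ around $n/2$ on the event $|N_1-n/2|\le n/4$ and control the complement with a Hoeffding bound. This gives
\[
a_n=\frac{2}{n}\Bigl(1+\frac{\Var N_1}{(n/2)^2}+O(n^{-2})\Bigr)=\frac{2}{n}\Bigl(1+\frac1n+O(n^{-2})\Bigr).
\]
Multiplying by $\frac{n}{2(n-1)}=\frac12\bigl(1+\frac1n+O(n^{-2})\bigr)$ yields $\frac1n+\frac{2}{n^2}+O(n^{-3})$.

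The main obstacle is this last expansion: it requires tracking the third- and fourth-order terms of the Taylor expansion of $1/N_1$ (odd central moments vanish by symmetry, and the fourth central moment is $O(n^2)$) to certify that the remainder is $O(n^{-3})$ overall. A secondary point to verify is that the exponentially small $N_1\in\set{0,n}$ contribution cannot change the supremum beyond $O(n^{-3})$.
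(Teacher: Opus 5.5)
Your proof is correct. It shares the paper's skeleton: explicit variances for the two Horvitz--Thompson variants, conditioning on $N_1$ and applying Neyman's formula for \DIM, and the extremal configuration with half the units at $(1,1)$ and half at $(0,0)$. It departs from the paper in three places.

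For $(\CRE,\DIM)$ you write the estimator as $\frac{2}{n}\sum_i A_i b_i$ minus a constant. This gives the exact risk $\frac{1}{n(n-1)}\sum_i (b_i-\bar b)^2$ and identifies all maximizers at once. The paper instead drops the $-S_\tau^2/n$ term from Neyman's formula and bounds $S_1^2$ and $S_0^2$ separately.

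For $(\BRE,\DIM)$ you bound the edge-event term $2^{1-n}\tau^2$ crudely, so you get the supremum only up to $O(2^{-n})$. The paper gets the exact value $\frac{na_n}{2(n-1)}$. It uses $\mu_1(1-\mu_1)+\mu_0(1-\mu_0)=2\bar\mu-2\bar\mu^2-\tau^2/2\le(1-\tau^2)/2$ together with $\frac{na_n}{2(n-1)}\ge 2^{1-n}$. For the stated $O(n^{-3})$ claim your sandwich $\frac{na_n}{2(n-1)}\le\sup\le\frac{na_n}{2(n-1)}+2^{1-n}$ is enough. It already settles the ``secondary point'' you flag, so nothing further needs checking there.

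For $a_n$ you expand $1/N_1$ to fourth order about $n/2$ on a symmetric central event and use Hoeffding for the complement. The paper instead proves the exact identity $\sum_{j=1}^n\binom{n}{j}/j=\sum_{k=1}^n(2^k-1)/k$ via Pascal's rule. It then expands $a_n=\sum_{l=0}^{n-1}2^{-l}/(n-l)-2^{-n}(H_n+1/n)$. The paper's route is exact and yields further terms directly. Yours is the generic delta-method argument, which would carry over to other treatment probabilities. Its cost is the moment bookkeeping you correctly identify: a third-order remainder alone would only give an $O(n^{-5/2})$ error.
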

The proof of Proposition \ref{prop:maximum_risks} also exposes the worst-case potential outcome configurations for each procedure. For the procedures using $\DIM$ with $n$ even, the worst-case is $n/2$ units with $(Y_i(1),Y_i(0)) = (1,1)$ and $n/2$ units with $(0,0)$. These worst-case configurations have zero SATE and are unique up to relabeling of units. For $(\BRE,\HT)$, the worst-case is  $Y_i(1) = Y_i(0) = 1$ for all $i;$ for $(\BRE,\cHT),$ every configuration for which $Y_i(1) - Y_i(0) = 0, \forall i$ achieves the maximum risk. For $(\BRE,\Opt)$, there are multiple worst-case configurations. Recall that the minimax optimal procedure is a function of $\sum_{i=1}^n S_i$, and under \BRE, $\sum_{i=1}^n S_i \sim p + \Bin(r,1/2)$, with $S_i = Y_iA_i + (1-Y_i)(1-A_i)$ and $p,q,r$ being the number of units with ITE equal to $1,-1,0$ respectively. Then any potential outcome configuration with the same $(p,q,r)$ values shares the same risk. Figure \ref{fig:opt_worst_case_configurations} displays some numerically-computed worst-case values of $(p,q,r)$ for various values of $n$. Note there are worst-case configurations with SATE equal to $-1,0,1$ and various values in between. All configurations appear to satisfy $p = 0$ or $q = 0$ and the set is symmetric upon swapping $p,q$.\\

\begin{figure}
    \centering
    \includegraphics[width=\linewidth]{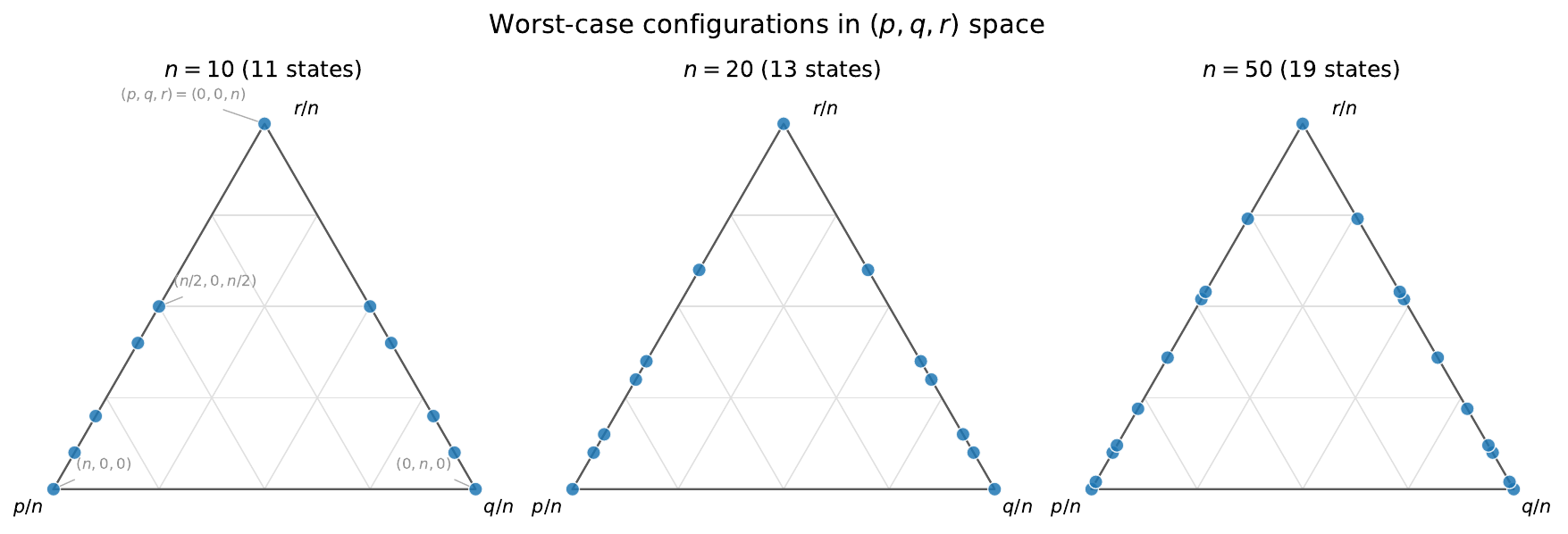}
    \caption{Blue dots represent numerically computed worst-case configurations for $(\BRE,\Opt)$, up to a tolerance of $10^{-7}.$ The plots represent the two-dimensional simplex $p+q+r = n$. Some explicit labels are provided on the left plot.}
    \label{fig:opt_worst_case_configurations}
\end{figure}

Despite its aesthetic and theoretical appeal, minimaxity may be too conservative a criterion to have practical implications -- a practitioner may suspect these worst-case configurations are too unrealistic. A useful complement is to consider admissibility and dominance as another lens. Recall that a procedure $(\cl{A},\hat\tau)$ \textit{dominates} another procedure $(\cl{B},\hat\theta)$ if 
\[
\E_{\cl{A}}\left[\left(\hat\tau(A,Y) - \tau \right)^2\right] \leq \E_{\cl{B}}\left[\left(\hat\theta(A,Y) - \tau \right)^2\right]
\]
with strict inequality at one configuration $\cl{P} \in \set{0,1}^{2n}.$ A procedure is \textit{admissible} if no other procedure dominates it.

\begin{theorem}[Admissibility of procedures]
\label{thm:admissibility_of_procedures}
For all even $n \geq 4$, (\CRE, \DIM) and (\BRE, \cHT) are admissible. Furthermore (\CRE, \DIM) dominates (\BRE, \DIM), (\BRE, \HT). Therefore, they are inadmissible.
\end{theorem}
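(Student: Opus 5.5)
The plan is to prove the two admissibility claims together with a Bayes-type argument that uses the prior \eqref{eq:bayes_model} from the proof sketch of Theorem \ref{thm:minimax_risk_reduction}. The two dominance claims I would then get by direct variance computations. The observation that drives admissibility is this. Fix $\Delta\in\set{-1,0,1}^n$ and draw the potential outcomes from \eqref{eq:bayes_model}. Under that prior, $S=(S_1,\dots,S_n)$ is independent of $A$ for every design, and it has the law \eqref{eq:S_definition}. Moreover, $\hat\tau_{\cHT}=(2\sum_i S_i-n)/n$ under both \CRE\ and \BRE. Hence for both (\CRE, \DIM) and (\BRE, \cHT), the prior-averaged risk equals $\Var(\sum_{i:\Delta_i=0}(2S_i-1))/n^2=r/n^2$, whatever the design. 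For (\BRE, \cHT) this also holds pointwise: its risk at any configuration is exactly $r/n^2$, where $r$ is the number of units with zero ITE.

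Now suppose some procedure $(\cl{A},\hat\tau)$ dominates either one. First step: look at configurations with every $\Delta_i\in\set{\pm1}$, i.e.\ $(Y_i(1),Y_i(0))\in\set{(1,0),(0,1)}$. Both reference procedures have risk $0$ there, so $\hat\tau(A,Y)=\tau$ for every $A$ in the support of $\cl{A}$. Because $(A,Y)\leftrightarrow(A,S)$ is a bijection and such configurations realize every $s\in\set{0,1}^n$, this forces $\hat\tau=(2\sum_i s_i-n)/n$ whenever the pair $(A,Y)$ corresponds to some $(A,s)$. That is, $\hat\tau$ coincides with $\hat\tau_{\cHT}$ as a function of $(A,S)$ on the support. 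Second step: for a general $\Delta$, average the competitor's risk over the prior \eqref{eq:bayes_model}. Since $A\perp S$, the estimator is still $(2\sum S_i-n)/n$, so its averaged risk is exactly $r/n^2$. This is the same averaged risk as the reference procedure. Pointwise dominance on the support of the prior (all configurations with ITE vector $\Delta$) then forces equality at every such configuration. Every configuration lies in the support for $\Delta=$ its own ITE vector, so no strict inequality is possible. This contradicts dominance and proves that both (\CRE, \DIM) and (\BRE, \cHT) are admissible.

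For the dominance claims I would compute risks explicitly. Under \CRE\ with $n/2$ treated, Neyman's formula gives
\[
R_n(\CRE,\DIM)=\frac{2S_1^2}{n}+\frac{2S_0^2}{n}-\frac{S_\tau^2}{n},
\]
where the finite-population variances have divisor $n-1$. For (\BRE, \HT), each unit contributes independently, giving
\[
R_n(\BRE,\HT)=\frac{1}{n^2}\sum_i\bigl(Y_i(1)+Y_i(0)\bigr)^2 .
\]
I would then show that the difference from the \CRE\ risk is a nonnegative quadratic form, strict at e.g.\ all $Y_i(1)=Y_i(0)=1$. For (\BRE, \DIM), I would condition on $m=\sum_iA_i$. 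Given $m\notin\set{0,n}$ the design is a \CRE\ with $m$ treated and \DIM\ is conditionally unbiased, so its conditional risk is $V(m)=S_1^2/m+S_0^2/(n-m)-S_\tau^2/n$. Pairing $m$ with $n-m$ and using convexity of $1/m$ gives $\tfrac12(V(m)+V(n-m))\ge V(n/2)$.

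The main obstacle is the boundary event $m\in\set{0,n}$, where the estimator is $0$ and the risk is $\tau^2$. This event has probability $2^{1-n}$. If $\tau^2$ is smaller than $V(n/2)$ there, it could break the inequality. I would handle it by using the slack from the convexity step at $m=1,n-1$, which has weight $2n\,2^{-n}$ and where $1/m+1/(n-m)$ exceeds $4/n$ by order $1$. The task is then to bound $V(n/2)\le\tfrac{2}{n}(S_1^2+S_0^2)$ by a multiple of that slack. This explicit check is where the hypothesis $n\ge4$ should enter. Strictness again holds at the worst-case configuration ($n/2$ units at $(1,1)$ and $n/2$ at $(0,0)$). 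Inadmissibility of (\BRE, \DIM) and (\BRE, \HT) then follows immediately.
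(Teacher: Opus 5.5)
Your proposal is correct, and most of it follows the paper. Your first admissibility step is the paper's: zero risk at configurations whose ITEs are all $\pm1$ forces the competitor's estimator to equal $\hat\tau_{\cHT}$ on the support of $\cl{A}$. Your treatment of (\BRE, \DIM) is the paper's argument in different bookkeeping. The paper writes the risk difference as $(a_n-\tfrac{2}{n})(S_1^2+S_0^2)+\tfrac{2^{1-n}}{n}S_\tau^2+2^{1-n}\tau^2$ and proves $a_n>2/n$ by pairing $k$ with $n-k$ and keeping the $k=1,n-1$ terms, which is exactly your check $(n-2)^2/(n-1)>4/n$.

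The genuine difference is the second admissibility step. The paper writes the risk of $(\cl{A},\hat\tau_{\cHT})$ as $u^\top\Sigma_{\cl{A}}u/n^2$, with $u=Y(1)+Y(0)-1\in\{-1,0,1\}^n$ and $\Sigma_{\cl{A}}=\E[\tilde A\tilde A^\top]$. It then uses the test vectors $u=\pm e_i$ and $e_i\pm e_j$ to force $\Sigma_{\cl{A}}$ to equal the reference design's matrix entrywise, so the risks agree everywhere. You instead average over the prior \eqref{eq:bayes_model} and use the ancillarity $S\perp A$ from the proof of Theorem~\ref{thm:minimax_risk_reduction}. Every design paired with $\hat\tau_{\cHT}$ then has averaged risk $r/n^2$, so pointwise inequalities carrying positive weights $2^{-r}$ with equal averages must all be equalities. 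In matrix language, you only use that the sign-average of $u^\top(\Sigma_{\cl{A}}-\Sigma_{\mathrm{ref}})u$ is the trace over the support of $u$, which vanishes because $\tilde A_i^2=1$.

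What each route buys: yours needs no design-moment computation, handles both reference procedures at once, and ties admissibility to the same Bayes structure that drives the minimax reduction. The paper's route makes explicit that any procedure matching the reference must pair $\hat\tau_{\cHT}$ with a design having the same pairwise moments $\E[\tilde A_i\tilde A_j]$.

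One small precision for (\BRE, \HT): the risk difference $\frac{1}{n^2(n-1)}\bigl[(\sum_i v_i)^2-\sum_i v_i^2\bigr]$, with $v_i=Y_i(1)+Y_i(0)$, is not a positive semidefinite form. It is nonnegative only because $v_i\ge 0$, which is how the paper argues it.
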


The domination of (\BRE, \DIM), (\BRE, \HT), all else equal, suggests we should prefer (\BRE, \Opt), (\BRE, \cHT), or (\CRE, \DIM). Although (\BRE, \Opt) demonstrates a substantial improvement in maximum risk per Figure \ref{fig:minimax_risk_curves}, a clear practical recommendation of which admissible procedure to use requires further research. We provide some numerical comparisons of the risk of these 3 procedures across various subsets of the potential outcome space $\set{0,1}^{2n}$.

The first simulation determines the fraction of the configuration space $\set{0,1}^{2n}$ for which each procedure has the smallest risk amongst all competitors. Two measures are used -- the standard uniform measure on the hypercube, and the measure under which count vectors $(n_{11},n_{10},n_{01},n_{00})$ are weighted equally, where $n_{ab} = \#\set{i: Y_i(1)=a,Y_i(0) = b}$. Figure \ref{fig:admissible_procedure_comparison_full_space} shows the results. It preempts the criticism that (\BRE, \Opt) might only reduce risk at its worst-case configurations, while being uncompetitive everywhere else. Under the uniform prior as $n$ increases, (\BRE, \Opt) has smaller risk than the other procedures on nearly every configuration. Under the count-vector weighting scheme (\CRE, \DIM) appears to be the best performer, although (\BRE, \Opt) is still useful. One reason explaining the phenomenon in the left panel of Figure \ref{fig:admissible_procedure_comparison_full_space} is simply that (\BRE, \Opt) uses a shrinkage estimator and performs well when the SATE is small. Figure \ref{fig:fixed_tau_admissible_procedure_comparison} corroborates this, showing the same comparisons on the subsets of the configuration space with fixed $\tau.$ As $|\tau|$ increases, (\BRE, \Opt) is less competitive than the other procedures. Note that these plots do not show the difference in risk between the three methods. 

\begin{figure}
    \centering
    \includegraphics[width=\linewidth]{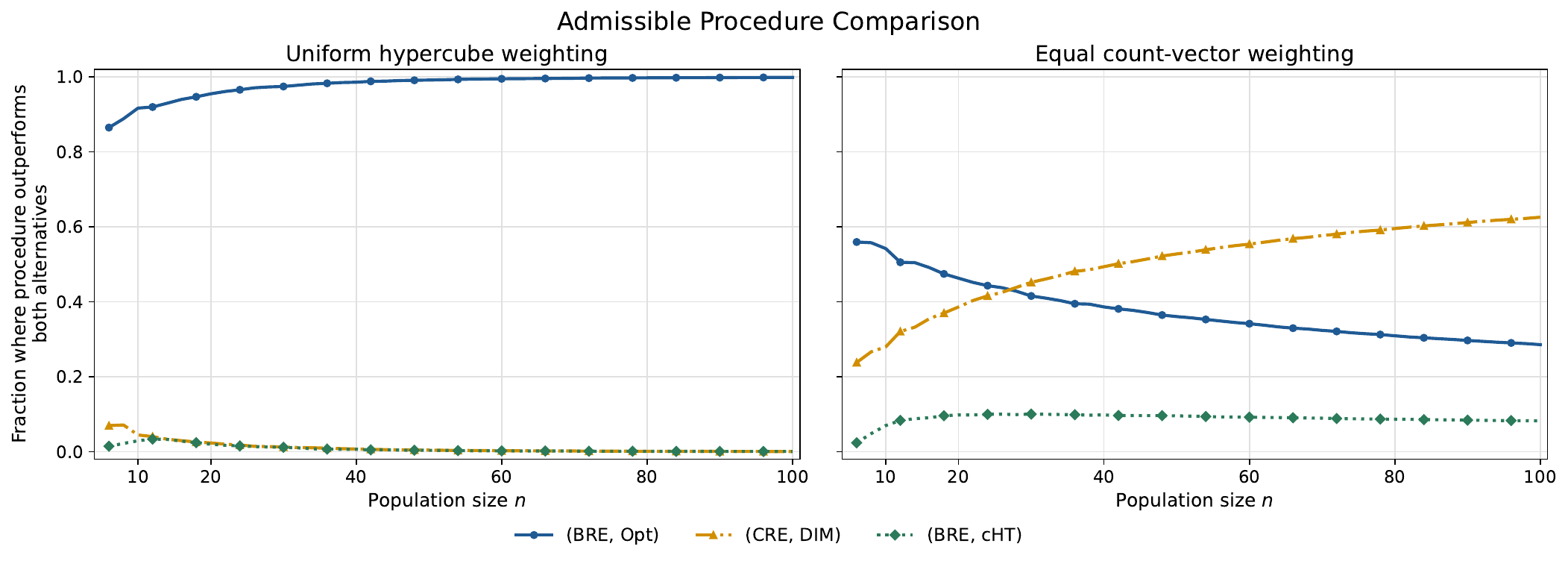}
    \caption{Each line shows the measure of configuration space $\set{0,1}^{2n}$ on which a fixed procedure has strictly smaller risk than its competitors, among the three admissible procedures for even $n$ studied in this paper. Ties are ignored. Two measures are used as in the text.}
    \label{fig:admissible_procedure_comparison_full_space}
\end{figure}

\begin{figure}
    \centering
    \includegraphics[width=\linewidth]{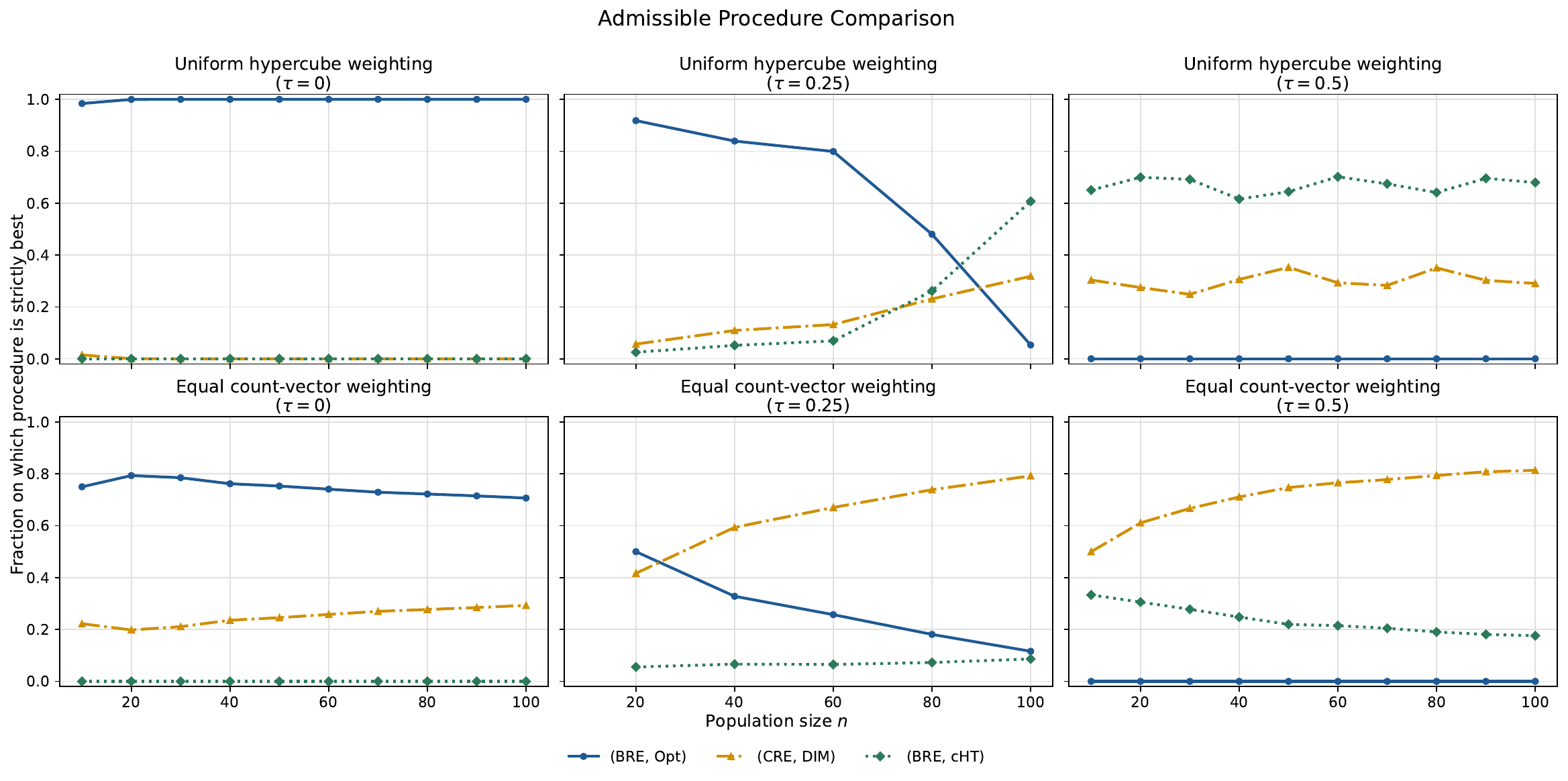}
    \caption{Same as Figure \ref{fig:admissible_procedure_comparison_full_space}, with configuration space restricted to those with fixed $\tau$. Only values of $n$ for which $n\tau \in \bb{Z}$ and which are multiples of $10$ are plotted.}
    \label{fig:fixed_tau_admissible_procedure_comparison}
\end{figure}

\section{Discussion}
\label{sec:discussion}

In this paper, we offer a sharp description of the minimax risk of estimating the sample average treatment effect with bounded potential outcomes, minimizing over joint choices of design and estimator. We show the minimax risk is exactly that of a simplified experiment with parameters $p,q,r \geq 0$ such that $p+q+r = n$, we observe data $X \sim p + \Bin(r,1/2)$, and wish to estimate $(p - q)/n$. We derive a second-order expansion for the minimax risk of this simplified experiment, revealing the slow second-order rate $n^{-4/3}$ and an interesting connection to the Airy function. 

Although our results are intended to be a theoretical benchmark, the procedure $(\BRE,\Opt)$ may potentially be developed into a practical method for the analysis of small randomized controlled trials. The sample size $n$ should be in some intermediate range (perhaps $n \leq 100$) where computing the exact rule is not computationally prohibitive and the nonlinear rule offers a substantial reduction in maximum risk over $(\CRE,\DIM), (\BRE,\cHT)$. Broad surveys of medical RCTs suggest that the number of participants recruited frequently falls within this range \cite{totton2023review, chang2022survey, marshall2021state}. Our numerical results in Figures \ref{fig:admissible_procedure_comparison_full_space}, \ref{fig:fixed_tau_admissible_procedure_comparison}  also suggest $(\BRE,\Opt)$ is a useful procedure for small effect sizes, which are important in practice. Also, the ability to perform inference with the minimax optimal procedure would be an important problem to address, as well as the potential to incorporate measured baseline covariates to further enhance efficiency and power. For inference, a randomization test might be well-suited for $(\BRE,\Opt)$. The test may be inverted to give confidence intervals by considering weak null hypotheses as in \cite{wu2021randomization}. 

We put forward (\BRE, \Opt) as a provably minimax optimal procedure deserving more research to determine its practical applicability. In the meantime, our results justify the continued use of (\CRE, \DIM) and (\BRE, \cHT), both of which are first-order minimax optimal, admissible over all procedures, unbiased, and have a well-developed inferential theory. 

\section*{Acknowledgements}

TS thanks Lihua Lei, Harrison Li, and Maggie Wang for helpful comments. This work was supported in part by the US NSF, ARO, ONR, and the Sloan Foundation. AI assistance (ChatGPT 5.6) was used in the preparation of this work, including generating code and figures, suggesting and checking proofs, and revising the article. The authors wrote the exposition and prose. All proofs were verified by the authors and we take responsibility for any errors.

\bibliography{main}

\appendix

\section{Computation of the Minimax Estimator}
\label{sec:computation}

Solving the reduced minimax problem in Theorem~ \ref{thm:simplified_model_minimax_risk} is substantially more
tractable than solving the original minimax problem directly. For
$s=(p,q,r)\in\mathcal I_n$, define $\theta_s := \frac{p-q}{n}$
and let
\[
    P_s(k)
    :=
    2^{-r}\binom{r}{k-p}
    \mathbf 1\{p\leq k\leq p+r\},
    \qquad k=0,\ldots,n,
\]
denote the probability of observing $X=k$ under state $s$. Then, for an
estimator $f:\{0,\ldots,n\}\to[-1,1]$, its risk at $s$ is
\[
    R_s(f)
    :=
    \sum_{k=0}^n
    P_s(k)\bigl(f(k)-\theta_s\bigr)^2.
\]

Introducing an epigraph variable $t$, the minimax risk can be written as the following quadratically-constrained convex program:
\begin{equation}
\label{eq:appendix-primal-program}
\begin{aligned}
    \text{minimize}_{f,t}\quad & t \\
    \text{subject to}\quad
    & \sum_{k=0}^n
      P_s(k)\bigl(f(k)-\theta_s\bigr)^2
      \leq t,
      && s\in\mathcal I_n,\\
    & -1\leq f(k)\leq 1,
      && k=0,\ldots,n.
\end{aligned}
\end{equation}
The restriction $f(k)\in[-1,1]$ is without loss of generality, since the
estimand $\theta_s$ lies in $[-1,1]$ and truncation to this interval cannot
increase mean squared error. Each constraint in
\eqref{eq:appendix-primal-program} is convex in $f$. This can be used as a simple way of computing the minimax risk and the minimax optimal estimator $f^*_n.$

A more efficient approach is to solve the dual formulation, which produces both a least-favorable prior and the associated Bayes estimator. Let
\[
    \Pi_n
    :=
    \left\{
        \pi\in\mathbb R_+^{\mathcal I_n}:
        \sum_{s\in\mathcal I_n}\pi_s=1
    \right\}
\]
denote the simplex of priors on $\mathcal I_n$. For $\pi\in\Pi_n$, define $a_k(\pi) := \sum_{s \in \cl{I}_n}\pi_sP_s(k), b_k(\pi) := \sum_{s \in \cl{I}_n}\pi_s\theta_sP_s(k)$, and $c(\pi) = \sum_{s \in \cl{I}_n} \pi_s \theta_s^2$. In particular, $a_k(\pi) = \Prob_{\pi}(X = k)$ and whenever $a_k(\pi)>0$,
\[
\frac{b_k(\pi)}{a_k(\pi)} = \mathbb E_\pi[\theta_s\mid X=k].
\]
Note that $a_k(\pi) = 0 \Rightarrow b_k(\pi) = 0.$ The Bayes risk can easily be calculated as
\begin{equation}
\label{eq:appendix-bayes-risk}
 B_n(\pi) = c(\pi) - \sum_{\substack{0\leq k \leq n \\ a_k(\pi)>0} } \frac{ b_k(\pi)^2 }{ a_k(\pi) }.
\end{equation}
Then $B_n(\pi)$ is concave in $\pi$, since $a_k(\pi),b_k(\pi),c(\pi)$ are linear in $\pi$ and the function with $\psi(a,b) =  b^2/a, a > 0$ and $\psi(a,b) = 0, (a,b) = (0,0)$ is convex, being the perspective of the convex function $b\mapsto b^2$. By the finite minimax theorem, $\rho_n^* = \max_{\pi\in\Pi_n} B_n(\pi).$ Consequently, a least-favorable prior can be computed by solving
\begin{equation}
\label{eq:appendix-full-dual}
\begin{aligned}
    \text{maximize}_{\pi}\quad
    &
    \sum_{s\in\mathcal I_n}\pi_s\theta_s^2 - \sum_{\substack{0\leq k\leq n\\a_k(\pi)>0}}
    \frac{\left(\sum_{s\in\mathcal I_n} \pi_s\theta_sP_s(k) \right)^2}{\sum_{s\in\mathcal I_n}\pi_sP_s(k)}\\
    \text{subject to}\quad
    &
    \pi_s\geq 0,
    \qquad s\in\mathcal I_n,\\
    &
    \sum_{s\in\mathcal I_n}\pi_s=1.
\end{aligned}
\end{equation}
This is a standard convex optimization problem. If $\pi_n^*$ is a solution, then the
corresponding minimax optimal estimator is $f_n^*(k) = \frac{b_k(\pi_n^*)}{a_k(\pi_n^*)}.$

\section{Proofs for Section \ref{sec:reduction_to_simplified_model}}

\begin{proof}[Proof of Theorem \ref{thm:simplified_model_minimax_risk}]
First, we will show $r_n^* = \rho_n^*$ by showing it suffices to restrict to permutation invariant estimators of $S$. Let $\delta(S)$ be any estimator. Consider
\[
\overline{\delta}(S) := \frac{1}{n!} \sum_{\sigma \in S_n} \delta(\sigma S).
\]
Let $P_\Delta$ be the law of $S$ given $\Delta$. Because the law of $S_i \mid \Delta_i$ is the same for each coordinate, $\sigma S \sim P_{\sigma \Delta}.$ 
By Jensen's inequality and the invariance $\tau(\Delta) = \tau(\sigma \Delta)$, we have 
\begin{align*}
\E_\Delta(\overline{\delta}(S) - \tau(\Delta))^2 & \leq \frac{1}{n!} \sum_{\sigma \in S_n} \E_\Delta(\delta(\sigma S) - \tau(\Delta))^2  \\ 
& = \frac{1}{n!} \sum_{\sigma \in S_n} \E_{\sigma\Delta}(\delta(S) - \tau(\sigma\Delta))^2 \\
& \leq \sup_\Delta \E_\Delta(\delta(S) - \tau(\Delta))^2.
\end{align*}
Since $S \in \set{0,1}^n$ the permutation invariant functions are functions of $\sum S_i$. Therefore, we may restrict to estimators of the form $f(\sum_i S_i)$. Now, observe $\sum_i S_i \sim p + \Bin(r,1/2)$ and recall $\tau(\Delta) = (p-q)/n$. Then, the minimax risk $r_n^*$ is given by
\begin{align*}
    & \inf_f \sup_{\Delta} \E_\Delta( f(\sum_i S_i) - \tau(\Delta))^2 \\
    = & \inf_f \sup_{\Delta} \E( f(p + \Bin(r,1/2)) - (p-q)/n)^2 \\
    = & \inf_f \sup_{p,q,r} \E( f(p + \Bin(r,1/2)) - (p-q)/n)^2,
\end{align*} 
which establishes the first claim.\\

For the second claim, apply Lemma \ref{lemma:existence_of_lfp} with $\Theta = \cl{I}_n, \cl{X} = \set{0,1,\dots,n}, P_\theta = \cl{L}(X) = p + \Bin(r,1/2).$ Note the estimand is $(p-q)/n \in [-1,1]$ and we may restrict to estimators $f$ in $[-1,1].$ We obtain a least favorable prior $\pi_n^*$ on $\cl{I}_n$ such that
\[
\rho_n^* = \inf_f \sup_{p,q,r \in \cl{I}_n}  \E\left( f(X) - \frac{p-q}{n}\right)^2 = \inf_\delta \E_{\pi_n^*}\E\left( \delta(X) - \frac{p-q}{n}\right)^2.
\]
The estimator achieving the infimum on the right-hand side is simply $\delta^*(X) = \E[(p-q)/n \mid X].$ Call the resulting Bayes risk $R_{\pi_n^*}(\delta^*)$. Lemma \ref{lemma:existence_of_lfp} also establishes the existence of a minimax estimator $f_n^*$, so that $\sup_{\cl{I}_n} \E\left( f^*_n(X) - \frac{p-q}{n}\right)^2 = \rho_n^*$. Then, its average risk under $\pi_n^*,$ denoted $R_{\pi_n^*}(f_n^*)$, is bounded above by its maximum risk and therefore by $\rho_n^*$:
\[
R_{\pi_n^*}(\delta^*) \leq R_{\pi_n^*}(f_n^*) \leq \rho_n^* = R_{\pi_n^*}(\delta^*).
\]
By projection properties of Bayes estimators, we conclude that $\E_{\pi_n^*}\E[(f^*_n(X) - \delta^*(X))^2] = 0$, or more explicitly 
\begin{equation}
    \sum_{x = 0}^n \Prob_{\pi_n^*}(X = x) (f^*_n(x) - \delta^*(x))^2 = 0.
\end{equation}

In the remainder of the proof, we will show that $\Prob_{\pi_n^*}(X = x) > 0$ for all $x$. If this is true, it is immediate that $f^*_n(x) = \delta^*(x)$ for all $x,$ which shows uniqueness, equality to the posterior mean of $(p-q)/n$ under $\pi_n^*$, and therefore admissibility.

Towards this goal, let $M_n^*$ be the unnormalized minimax risks
\begin{equation}
\label{eq:unnormalized_minimax_risks}
M_n^* = \inf_f \sup_{\cl{I}_n} \E\left[\left(f\left(X\right) - (p-q) \right)^2\right],
\end{equation}
so that $M^*_n = n^2\rho_n^*$. For the edge case, let $M_0^* = 0.$ Suppose for the sake of contradiction that $\Prob_{\pi_n^*}(X = x) = 0$ for some $x \in \set{0,1,\dots,n}$. Because the support of $X$ is exactly $\set{p,p+1,\dots,p+r}$, it must follow that either 1) $p \geq x + 1$, or 2) $p + r  + 1\leq x$, which is equivalent to $q \geq n - x + 1.$ Let $\cl{I}_n^{(1)}$ denote the subset of $\cl{I}_n$ such that the first case holds, and $\cl{I}_n^{(2)}$ the subset for the second case. Therefore, $\pi_n^*$ is supported on $\cl{I}_n^{(1)} \cup \cl{I}_n^{(2)}$. \\

We can explicitly compute the minimax risk on this union space $\cl{I}_n^{(1)} \cup \cl{I}_n^{(2)}$ in terms of two easier subproblems:
\begin{align*}
 & \inf_f \sup_{\cl{I}^{(1)}_n \cup \cl{I}^{(2)}_n} \E\left[ (f(X) - (p-q))^2 \right] \\
 = & \max\left( \inf_f\sup_{\cl{I}^{(1)}_n} \E\left[ (f(X) - (p-q))^2 \right], \inf_f \sup_{\cl{I}^{(2)}_n} \E\left[ (f(X) - (p-q))^2 \right] \right).
\end{align*}
The last equality holds due to the following crucial observation: on $\cl{I}_n^{(1)}$, the possible support of $X$ is $\set{x+1,\dots,n}$ while on $\cl{I}_n^{(2)}$ it is $\set{0,\dots,x-1}$. Because these are disjoint, one can simply combine the minimax estimators on the individual spaces to obtain a minimax estimator on the union.

For any $(p,q,r) \in \cl{I}_n^{(1)},$ define $(p',q',r')$ by $p' = p - x - 1, q' = q, r' = r$. This transformation defines a bijection between $\cl{I}_n^{(1)}$ and $\cl{I}_{n - x - 1}.$ Then
\begin{align*}
    & \inf_f\sup_{\cl{I}^{(1)}_n} \E\left[ (f(X) - (p-q))^2 \right] \\ 
     = & \inf_f\sup_{\cl{I}^{(1)}_n} \E\left[ (f(p' + \Bin(r,1/2) + (x+1))-(x+1) - (p'-q'))^2 \right] \\
     = & \inf_{\tilde{f}} \sup_{(p',q',r') \in \cl{I}_{n-x-1}} \E\left[ (\tilde{f}(p' + \Bin(r',1/2)) - (p'-q'))^2 \right].
\end{align*}

Therefore, the minimax risk over $\cl{I}_n^{(1)}$ is equal to $M_{n-x-1}^*$. The same argument applies to the $\cl{I}_n^{(2)}$, using the bijection which maps $(p,q,r) \in \cl{I}_n^{(2)}$ to $(p',q',r') \in \cl{I}_{x-1}$ via $p' = p, q' = q - n + x - 1, r' = r.$ This shows the minimax risk over 
$\cl{I}_n^{(2)}$ is exactly $M_{x-1}^*$. Combining these facts, we conclude 
\begin{equation}
\inf_f \sup_{\cl{I}^{(1)}_n \cup \cl{I}^{(2)}_n} \E\left[ (f(X) - (p-q))^2 \right] = \max(M_{x-1}^*, M^*_{n - x - 1}).
\end{equation}
If either $x-1$ or $n-x-1$ falls outside the set $\set{0,1,\dots,n}$, omit the corresponding term from the maximum. Because $\pi_n^*$ is supported on $\cl{I}_n^{(1)} \cup \cl{I}_n^{(2)}$, we conclude that the Bayes risk of $\pi_n^*$ is at most the minimax risk on $\cl{I}_n^{(1)} \cup \cl{I}_n^{(2)}$, yielding
\begin{equation}
\label{eq:unnormalized_risk_inequality_easier_subproblem}
M_n^* \leq \max(M_{x-1}^*, M^*_{n - x - 1}).
\end{equation}
Lemma \ref{lemma:unnormalized_minimax_risks_strictly_increasing}  shows that $M_n^*$ is strictly increasing in $n$, which is a contradiction to \eqref{eq:unnormalized_risk_inequality_easier_subproblem}.

\end{proof}

\begin{lemma}
\label{lemma:unnormalized_minimax_risks_strictly_increasing}
Let $M_n^*$ be the unnormalized risks defined in Eq. \eqref{eq:unnormalized_minimax_risks}. Then
for any $m,l$, $ M^*_m + M^*_l \leq M^*_{m+l}$. In particular, the sequence $M_n^*$ is strictly increasing.
\end{lemma}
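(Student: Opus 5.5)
The plan is to prove superadditivity $M_m^* + M_l^* \leq M_{m+l}^*$ with a product-prior (independent-blocks) argument. Strict monotonicity then follows once we know $M_l^* > 0$ for $l \geq 1$.

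\textbf{Step 1 (minimax equals maximin).} For each $k$, Lemma \ref{lemma:existence_of_lfp} (as used in the proof of Theorem \ref{thm:simplified_model_minimax_risk}) gives a least-favorable prior $\pi_k^*$ on $\cl{I}_k$. Its Bayes risk for the unnormalized estimand $p-q$ equals $M_k^*$. Thus it suffices to exhibit a prior on $\cl{I}_{m+l}$ whose Bayes risk for $p-q$ is at least $M_m^* + M_l^*$.

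\textbf{Step 2 (the prior on $\cl{I}_{m+l}$).} Draw $(p_1,q_1,r_1)\sim \pi_m^*$ and $(p_2,q_2,r_2)\sim\pi_l^*$ independently. Draw $X_1 = p_1 + \Bin(r_1,1/2)$ and $X_2 = p_2+\Bin(r_2,1/2)$ independently given the parameters. Set $(p,q,r) = (p_1+p_2, q_1+q_2, r_1+r_2)\in \cl{I}_{m+l}$. Then $X := X_1+X_2$ has exactly the law $p+\Bin(r,1/2)$, so the pushforward prior on $(p,q,r)$ together with the observation $X$ is an instance of the reduced model on $\cl{I}_{m+l}$.

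\textbf{Step 3 (Bayes risk bound).} Any estimator based on $X$ is in particular a function of $(X_1,X_2)$. Hence the Bayes risk of estimating $\theta_1+\theta_2$ from $X$ is at least that from $(X_1,X_2)$, where $\theta_j = p_j - q_j$. Given $(X_1,X_2)$, the pairs $(\theta_1,X_1)$ and $(\theta_2,X_2)$ are independent. So the posterior variance of $\theta_1+\theta_2$ is $\Var(\theta_1\mid X_1)+\Var(\theta_2\mid X_2)$, and taking expectations gives a Bayes risk of $M_m^* + M_l^*$. This proves superadditivity. It also handles the edge cases $m=0$ or $l=0$, since $M_0^*=0$.

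\textbf{Step 4 (strict positivity).} Strict increase requires $M_1^* > 0$; superadditivity then gives $M_{n+1}^*\geq M_n^*+M_1^*>M_n^*$. In $\cl{I}_1$, the states $(0,0,1)$ and $(1,0,0)$ both put positive mass on $X=1$ but have estimands $0$ and $1$. Hence $f(1)$ cannot equal both, so every $f$ has positive maximum risk, and $M_1^*>0$ (indeed $M_n^*>0$ for all $n\ge1$). The only mildly delicate point is Step 3's independence claim, which relies on the product structure of the joint law of $(\theta_1,X_1,\theta_2,X_2)$. Otherwise the argument is routine.
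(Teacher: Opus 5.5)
Your proposal is correct and follows essentially the same route as the paper. You combine independent draws from $\pi_m^*$ and $\pi_l^*$, bound the resulting Bayes risk by $M_{m+l}^*$, refine the conditioning from $X$ to $(X^{(1)},X^{(2)})$, use the product structure to split the posterior variance, and deduce strict monotonicity from $M_1^*>0$. Your Step 4 also justifies $M_1^*>0$, which the paper only asserts; to pass from ``every $f$ has positive maximum risk'' to a positive infimum, note that a minimax estimator exists by Lemma~\ref{lemma:existence_of_lfp}, or that $\min_t \max(t^2/2,(t-1)^2)>0$.
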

\begin{proof}
As in the proof of Theorem \ref{thm:simplified_model_minimax_risk}, there exist least-favorable priors $\pi_n^*$ for each $n$ with Bayes risks equal to $M_n^*$. Draw $(P^{(1)},Q^{(1)},R^{(1)})$ random variables from $\pi_m^*$ and independently $(P^{(2)},Q^{(2)},R^{(2)}) \sim \pi_l^*$. Generate $X^{(1)} \sim P^{(1)} + \Bin(R^{(1)},1/2), X^{(2)} \sim P^{(2)} + \Bin(R^{(2)},1/2)$, conditionally independently. Set $(P,Q,R) = (P^{(1)} + P^{(2)},Q^{(1)} + Q^{(2)},R^{(1)} + R^{(2)})$, which belongs to $\cl{I}_{m + l}.$ Now, define
\[
X := X^{(1)} + X^{(2)} \sim P + \Bin(R,1/2). 
\]
Consider the model with prior $G$ on $\cl{I}_{m+l}$ given by the law of $(P,Q,R)$, with observation $X$ and estimand $P-Q$. Its Bayes risk is the expected conditional variance $\E_G[\Var_G(P - Q \mid X)]$. This is at most $M^*_{m + l}$, since Theorem \ref{thm:simplified_model_minimax_risk} shows $M^*_{m+l}$ is equal to the Bayes risk of the least favorable prior $\pi^*_{m+l}$ on $\cl{I}_{m+l}$ under the same observation model. By the standard data processing inequality $\E[\Var(X \mid \cl{G}_1)] \leq \E[\Var(X \mid \cl{G}_2)]$ for any random variable $X$ and sigma-algebras $\cl{G}_1 \supseteq \cl{G}_2$, we have
\begin{align*}
    \E_G[\Var_G(P - Q \mid X)] & \geq \E_G[\Var_G(P - Q \mid X^{(1)},X^{(2)})] \\
    & = \E_G[\Var_G(P^{(1)} - Q^{(1)} \mid X^{(1)},X^{(2)})] + \E_G[\Var_G(P^{(2)} - Q^{(2)} \mid X^{(1)},X^{(2)})] \\
    & = \E_G[\Var_G(P^{(1)} - Q^{(1)} \mid X^{(1)})] + \E_G[\Var_G(P^{(2)} - Q^{(2)} \mid X^{(2)})] \\
    & = M^*_m + M^*_l. 
\end{align*}
This proves the superadditivity property. Finally, observe that $M_1^* > 0$. This completes the proof.
\end{proof}

\begin{proof}[Proof of Theorem \ref{thm:minimax_risk_reduction}]
\noindent \textit{Lower Bound.} Consider any prior $\pi(G)$ on potential outcomes $\cl{P}$ generated by first sampling parameters $\Delta \in \set{-1,0,1}^n$ from a distribution $G$, and independently sampling the potential outcomes as in \eqref{eq:bayes_model}. Throughout this proof, note that $\Delta_i = Y_i(1) - Y_i(0)$ and also define 
\[
S_i = A_i Y_i + (1-A_i)(1-Y_i).
\] 
Regardless of $A,$ the law of $S$ is that described in \eqref{eq:S_definition}. Indeed, when $\Delta_i = 1,$ we have $S_i = A_iY_i(1) + (1 - A_i)(1 - Y_i(0)) = 1.$ When $\Delta_i = -1$, we have $S_i = A_i \cdot 0 + (1 - A_i)\cdot 0 = 0.$ Finally, when $(Y_i(1),Y_i(0)) = (1,1), S_i = A_i$ and when $(Y_i(1),Y_i(0)) = (0,0), S_i = 1 - A_i$. Therefore, conditional on $\Delta_i = 0, S_i \sim \textsf{Ber}(1/2)$ independently of $A$. In particular, $\Prob(S,A \mid \Delta) = \Prob(S \mid \Delta)\Prob_{\cl{A}}(A)$.

By Bayes' Rule, we have
\begin{equation}
    \Prob(\Delta \mid S,A) = \Prob(\Delta \mid S),
\end{equation}
so $A$ is ancillary for the parameter $\Delta$. Then, for any design $\cl{A}$ and estimator $\hat \tau$,
\begin{align*}
\sup_{\cl{P}} \E_{A,Y}\left[(\hat \tau(A,Y) - \tau)^2 \right] & \geq \sup_{\pi(G)} \E_{\cl{P} \sim \pi(G)}\E\left[(\hat \tau(A,Y) - \tau)^2 \right] \\ 
& \geq \sup_{\pi(G)} \E_{\cl{P},A,Y}(\E[\tau \mid S,A] - \tau)^2  \\
& = \sup_{\pi(G)}  \E_{\cl{P},Y}(\E[\tau \mid S] - \tau)^2.
\end{align*}
The latter quantity is the worst-case Bayes risk of estimating $\tau$ under the hierarchical model $\Delta \sim G$ and $\cl{P} \mid \Delta \sim \pi(G)$, as in Equation \eqref{eq:bayes_model}. This is exactly the structure of the reduced model. Apply Lemma \ref{lemma:existence_of_lfp} by taking $\Theta = \set{-1,0,1}^n, \cl{X} = \set{0,1}^n$ and $P_\theta$ the law of $S$. We conclude there is a least favorable prior $G_{\textsf{LF}}$ on $\Delta$ such that the Bayes risk under $G_{\textsf{LF}}$ is equal to $\rho_n^*$. Note the restriction to $[-1,1]$ in Lemma \ref{lemma:existence_of_lfp} does not increase risk and is thus immaterial for the reduced model. Then
\[
\sup_{\cl{P}} \E_{A,Y}\left[(\hat \tau(A,Y) - \tau)^2 \right] \geq r_n^*.
\]
By Theorem \ref{thm:simplified_model_minimax_risk}, $r_n^* = \rho_n^*$.\\

\noindent \textit{Upper Bound.} Consider the Bernoulli Randomized Experiment with treatment probability $1/2$ so $A_i \sim \textsf{Ber}(1/2)$ i.i.d. Then consider again $S_i = A_i Y_i + (1-A_i)(1-Y_i)$. As we checked above, 
\begin{align*}
S_i =
\begin{cases}
1 & \text{ if } (Y_i(1),Y_i(0)) = (1,0) \\
0 & \text{ if } (Y_i(1),Y_i(0)) = (0,1) \\
1 - A_i & \text{ if } (Y_i(1),Y_i(0)) = (0,0) \\
A_i & \text{ if } (Y_i(1),Y_i(0)) = (1,1).
\end{cases}
\end{align*}
Under Bernoulli randomization, this is exactly the law of the reduced model \eqref{eq:S_definition}. Then $f^*_n(\sum S_i)$ has maximum risk exactly $\rho_n^* = r_n^*$.\\

\noindent \textit{Admissibility.} Suppose the procedure $(\BRE,\Opt)$ were dominated by some other procedure $(\cl{A},\hat \theta)$. Reparametrize the estimator $\hat \theta$ to be a function of the assignment vector $A \sim \cl{A}$ and $S_i.$ Domination implies for every potential outcome configuration $\cl{P}$,
\begin{equation}
\label{eq:risk_domination_of_alternative_procedure}
\textstyle \E\left[\left(f^*_n\left(\sum_i S_i \right) - \tau \right)^2 \right] \geq \E_{\cl{A}}[(\hat \theta(A,S) - \tau)^2] 
\end{equation}
with strict inequality for one configuration $\cl{P}_0$. 
Let $\Delta_0$ be the vector of individual treatment effects under $\cl{P}_0.$ 

For any fixed vector $\Delta \in \set{-1,0,1}^n$, let $P_\Delta$ be the prior on potential outcomes from \eqref{eq:bayes_model}, and let $p,q,r$ denote the number of $1,-1,0$ ITEs respectively in $\Delta$. Averaging \eqref{eq:risk_domination_of_alternative_procedure} over $P_\Delta$ yields for any $\Delta$
\[
\textstyle \E_{P_\Delta} \E\left[\left(f^*_n\left(\sum_i S_i \right) - \tau \right)^2 \right] \geq \E_{P_\Delta} \E_{\cl{A}}[(\hat \theta(A,S) - \tau)^2],
\]
with strict inequality for $\Delta_0$. The left hand side is equal to $ \E\left[\left(f^*_n\left(\sum_i S_i \right) - \tau \right)^2 \right]$, because $P_\Delta$ is supported on configurations with the same values of $(p,q,r)$ as $\Delta$ and the law of $\sum_i S_i$ only depends on $\Delta$ through $(p,q,r)$. Define the estimator $\hat \tau(s) = \E_{\cl{A}}\hat \theta(A,s)$ for the reduced model. It was shown in the first part of the proof that for any fixed $\Delta$, $S \indep A \mid \Delta$. With this fact and Jensen's inequality, we obtain
\begin{equation}
\label{eq:risk_domination_of_alternative_procedure_II}
\textstyle \E\left[\left(f^*_n\left(\sum_i S_i \right) - \tau \right)^2 \right] \geq \E_{P_{\Delta}}[(\hat \tau(S) - \tau)^2],
\end{equation}
for all $\Delta$ with strict inequality for $\Delta_0$. Under $P_\Delta$, the data $S$ has exactly the same law as in the reduced model \eqref{eq:S_definition}. Therefore \eqref{eq:risk_domination_of_alternative_procedure_II} can be interpreted strictly as a statement about the reduced model of Section \ref{sec:reduction_to_simplified_model}. By a similar argument in the proof of Theorem \ref{thm:simplified_model_minimax_risk}, we may symmetrize $\hat \tau$ to obtain another estimator $\bar \tau (\sum_i S_i)$ so that
\begin{equation}
\textstyle \E\left[\left(f^*_n\left(\sum_i S_i \right) - \tau \right)^2 \right] \geq \E_{P_{\Delta}}[(\bar \tau(\sum_i S_i) - \tau)^2],
\end{equation}
and strictly smaller risk at $\Delta_0.$ In particular, 
\begin{equation}
\textstyle \E_{p,q,r}\left[\left(f^*_n\left(p + \Bin(r,1/2) \right) - (p-q)/n \right)^2 \right] \geq \E_{p,q,r}[(\bar \tau(p + \Bin(r,1/2)) - (p-q)/n)^2],
\end{equation}
for all $(p,q,r) \in \cl{I}_n$ and strict inequality at the $(p,q,r)$ values of $\Delta_0$. However, this implies $\bar{\tau}$ is minimax and therefore equal to $f_n^*$ by the uniqueness property of Theorem \ref{thm:simplified_model_minimax_risk}. This contradicts the purported strict risk dominance at $\Delta_0$. 

\end{proof}

\begin{proof}[Proof of Theorem \ref{thm:minimax_risk_reduction_bounded_case}]
We will first show the lower bound. Suppose $Y_i(a) \in [L,U], a = 0,1$. Define the recentered potential outcomes
\[
\tilde Y_i(a) := \frac{Y_i(a) - L}{U - L}
\]
which are in $[0,1]$ and let $\tilde Y$ be the same transformation applied to the observed data $Y$. This transformation is bijective. Furthermore, let $\tilde \tau = \tau / (U - L)$, which also equals $\frac{1}{n}\sum_{i=1}^n \tilde Y_i(1) - \tilde Y_i(0)$. For each estimator $\hat \tau(A,Y),$ define 
\[
\hat \tau_{\Bin}(a,y) = \frac{1}{U-L}\hat \tau\left(a,(U-L)y+L\right),
\]
so $\hat \tau(A,Y) = (U-L)\hat \tau_{\Bin}(A,\tilde{Y}).$
Then
\begin{align*}
    & \inf_{A,\hat \tau}\sup_{[L,U]^{2n}}\E\left[\left(\hat \tau(A,Y) - \tau\right)^2 \right] \\
    & = (U-L)^2 \inf_{A,\hat \tau}\sup_{[L,U]^{2n}}\E\left[\left(\hat \tau_{\Bin}(A,\tilde Y) - \tilde \tau\right)^2 \right] \\
    & = (U-L)^2 \inf_{A,\hat \tau}\sup_{\tilde Y(a) \in [0,1]^{2n}}\E\left[\left(\hat \tau(A,\tilde Y) - \tilde \tau\right)^2 \right] \\
    & \geq (U-L)^2 \inf_{A,\hat \tau}\sup_{\tilde Y(a) \in \set{0,1}^{2n}}\E\left[\left(\hat \tau(A,\tilde Y) - \tilde \tau\right)^2 \right] \\
    & = (U-L)^2r_n^*.
\end{align*}

For the upper bound, it suffices to handle the case where $[L,U] = [0,1]$ because the previous transformation $(Y_i(a) - L)/(U - L)$ may then be applied. Fix $\set{Y_i(1),Y_i(0)}_{i=1}^n \in [0,1]^{2n}$ and consider again $S_i = A_i Y_i(1) + (1-A_i)(1-Y_i(0)) \in [0,1]$. Let $f_n^*$ be the minimax optimal estimator in the simplified experiment, from Theorem \ref{thm:simplified_model_minimax_risk}. We claim the procedure with \BRE \ and the estimator 
\[
g^*(S_1,\dots,S_n) := \E\left[f^*_n\left(\sum_{i=1}^n B_i\right) \mid S \right], \quad B_i \stackrel{ind}{\sim} \Ber(S_i)
\]
has maximum risk $\rho_n^*$. We claim $g^*$ can equivalently be written as 
\begin{equation}
\label{eq:helper_1}
g^*(S_1,\dots,S_n) = \E_G\left[ f^*_n\left(\sum_{i=1}^n T_i \right) \mid A\right]
\end{equation}
where $T_i = A_iZ_i(1) + (1-A_i)(1-Z_i(0))$ and $G$ is a prior on the potential outcomes $(Z_i(1),Z_i(0))_{i=1}^n$ given by $Z_i(a) \stackrel{ind}{\sim} \Ber(Y_i(a))$ for all $i=1,\dots,n, a \in \set{0,1}.$ Under $G$, it is easy to see that $T_i \mid A_i = 1 \sim \Ber(Y_i(1)) = \Ber(S_i),$ and $T_i \mid A_i = 0 \sim \Ber(1 - Y_i(0)) = \Ber(S_i).$ Therefore, conditional on $A,$ $T_i$ are independent with laws $\Ber(S_i)$, which establishes \eqref{eq:helper_1}. 

Now, $\tau = \frac{1}{n}\sum_{i=1}^n Y_i(1) - Y_i(0) = \E_G\left[\frac{1}{n}\sum_{i=1}^n Z_i(1) - Z_i(0) \right]$. Let $\tau_Z := \frac{1}{n}\sum_{i=1}^n Z_i(1) - Z_i(0)$. Because the design $A$ is independent of the outcomes $Z,$ we conclude that $\tau = \E[\tau_Z \mid A]$. Expanding the risk of $g^*$ yields
\begin{align*}
    \E\left[\left(g^*(S_1,\dots,S_n) - \tau \right)^2\right] & = \E\left[\left(\E_G \left[f^*_n\left(\sum T_i \right) - \tau_Z \mid A\right]\right)^2\right] \\
    & \leq \E\left[\E_G \left(f^*_n\left(\sum T_i \right) - \tau_Z \right)^2\right] \\
    & = \E_G\left[ \E \left(f^*_n\left(\sum T_i \right) - \tau_Z \right)^2\right].
\end{align*}
The inner quantity, for any fixed realization of $Z$, is just the risk of $f_n^*$ on the binary potential outcome configuration $(Z_i(1),Z_i(0))_i$. This is upper bounded by $\rho_n^* = r_n^*$, as desired.
\end{proof}

\section{Proofs for Section \ref{sec:second_order_risk}}
 
Following the notation in Section \ref{sec:second_order_risk}, consider the model where we observe data $X = \theta + \sum_{i=1}^r \e_i$ with unknown parameters $(\theta,r) \in \Theta_n$ and wish to estimate $\theta/n$. Recall the function
\begin{equation}
\label{eq:variational_minimizer}
\phi_A(t) := c \Ai\left( \frac{|t| - C_A}{4^{1/3}}\right),
\end{equation}
where $C_A = -4^{1/3}a_1'$, with $a_1'$ being the largest negative zero of the derivative of the Airy function. We will choose $c$ such that $\int_0^\infty \phi_A(t)^2 dt = 1$. Finally, define $h_A := -2\phi'_A/\phi_A$. Proposition \ref{prop:airy_solution_properties} will show $\phi_A > 0$ and so $h_A$ is well-defined. Figure \ref{fig:airy_ground_state_and_shrinkage} visualizes these functions alongside the Airy function for convenience.

\begin{figure}[H]
    \centering
    \includegraphics[width=\linewidth]{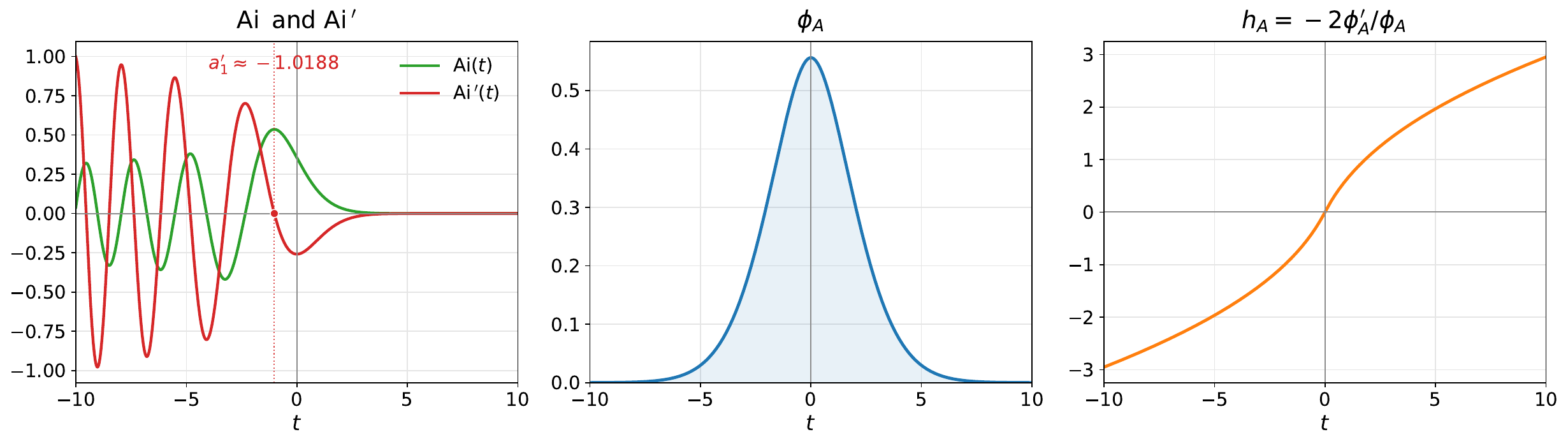}
    \caption{Plots of the Airy function and its derivative along with $\phi_A, h_A$. }
    \label{fig:airy_ground_state_and_shrinkage}
\end{figure}

Theorem \ref{thm:second_order_minimax_risk_behavior} will follow as a direct consequence of the following results, which provide the explicit second-order minimax estimator and a sequence of asymptotically least-favorable priors that yields the lower bound. Our proofs do not justify the variational heuristic in Section \ref{sec:second_order_risk}, which was only used to derive the form of the second-order minimax estimator. 

\begin{theorem}
\label{thm:second_order_minimax_risk_behavior_detailed_upper_bound}
The estimator $\hat \tau_n^A(X) := \frac{X}{n} - \frac{1}{n^{2/3}}h_A\left( \frac{X}{n^{2/3}}\right)$ satisfies the risk bound
\[
\sup_{(\theta,r) \in \Theta_n} \E[(\hat \tau_n^A(X) - \theta/n)^2] \leq \frac{1}{n} - \frac{C_A}{n^{4/3}} + o_n(n^{-4/3}).
\]
\end{theorem}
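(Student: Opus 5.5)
The plan is to make the heuristic expansion of Section~\ref{sec:second_order_risk} rigorous for the specific choice $h=h_A$, with $\alpha=2/3$. First I would record the needed properties of $h_A$, presumably those of Proposition~\ref{prop:airy_solution_properties}. Since $\phi_A>0$, $h_A=-2\phi_A'/\phi_A$ is odd and smooth away from $0$. At $0$ it is continuous, with $h_A(0)=0$ because $a_1'$ is a zero of $\Ai'$, so $\phi_A'(0^\pm)=0$. It is bounded on compacts. As $|x|\to\infty$ the Airy asymptotics $\Ai'(z)/\Ai(z)\sim-\sqrt z$ give $h_A(x)\sim \mathrm{sgn}(x)\sqrt{|x|}$, up to constants. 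The key algebraic identity, obtained by substituting $\phi_A'=-h_A\phi_A/2$ into $-4\phi_A''+|x|\phi_A=C_A\phi_A$, is the Riccati equation
\[
h_A(x)^2-2h_A'(x)-|x|=-C_A \quad \text{for all } x\neq 0 .
\]
The function $h_A'$ has at most a jump-free kink at $0$ ($h_A'$ is continuous since $h_A'=(h_A^2-|x|+C_A)/2$), and $h_A''$ is bounded on compacts and $O(|x|^{-1/2})$ at infinity.

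Next, fix $(\theta,r)\in\Theta_n$, write $U_r=\sum_{i\le r}\e_i$ and $\tilde\theta=\theta/n^{2/3}$, and expand the risk exactly:
\[
\frac{r}{n^2}-\frac{2}{n^{5/3}}\E[U_r h_A(\tilde\theta+U_r/n^{2/3})]+\frac{1}{n^{4/3}}\E[h_A(\tilde\theta+U_r/n^{2/3})^2].
\]
For the cross term I would use the Rademacher Stein identity, Lemma~\ref{lemma:stein_rademacher_sums}: $\E[\e_i g(U)]=\E[(g(U_{-i}+1)-g(U_{-i}-1))/2]$. This gives $\E[U_r h_A(\cdot)]=r n^{-2/3}\E[h_A'(\tilde\theta+U_r/n^{2/3})]+r\,O(n^{-4/3}\sup|h_A''|)$, with the second-difference error controlled locally. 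Substituting the Riccati identity $h_A^2-2h_A'=|x|-C_A$ and using $r\le n-|\theta|$ bounds the risk by
\[
\frac{r}{n^2}+\frac{1}{n^{4/3}}\E\Big[\big(1-\tfrac rn\big)h_A(Z)^2+\tfrac rn\big(|Z|-C_A\big)\Big]+\text{error},
\qquad Z=\tilde\theta+U_r/n^{2/3}.
\]

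Then I would split into regimes. \emph{Central regime}, $|\theta|\le Kn^{2/3}$: here $r/n=1-O(n^{-1/3})$ and $\E|Z|=|\tilde\theta|+O(n^{-1/6})$. The bracket becomes $|\tilde\theta|-C_A+o(1)$, so the risk is at most $\frac{n-|\theta|}{n^2}+\frac{|\tilde\theta|-C_A}{n^{4/3}}+o(n^{-4/3})=\frac1n-\frac{C_A}{n^{4/3}}+o(n^{-4/3})$, since the $|\theta|/n^2$ terms cancel exactly. This cancellation is precisely why $\alpha=2/3$ works. \emph{Large-$\theta$ regime}, $|\theta|\ge Kn^{2/3}$: here the variance term $r/n^2\le \frac1n-Kn^{-4/3}$ already gives slack $K$. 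The perturbation contributes at most $n^{-4/3}\E[h_A(Z)^2]\lesssim n^{-4/3}(1+\E|Z|)$, and the cross term is similar, with $\E|Z|\le |\tilde\theta|+\sqrt r/n^{2/3}$. The growth $h_A^2\sim|x|$ must be beaten by the $-|\theta|/n^2$ gain. Using the Riccati identity again, the $|Z|$ contributions cancel to leading order against $-|\theta|/n^2$, leaving $-C_A+(1-r/n)h_A^2$ terms. When $|\theta|$ is of order $n$, $(1-r/n)\E h_A(Z)^2\approx (|\theta|/n)|\tilde\theta|=|\theta|^2/n^{5/3}$, giving risk $\approx r/n^2+|\theta|^2/n^3\le 1/n$ plus slack. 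I would check this quadratic comparison directly, using $r\le n-|\theta|$: $(n-|\theta|)/n^2+\theta^2/n^3\le 1/n-(|\theta|/n^2)(1-|\theta|/n)$. That suffices except near $|\theta|\approx n$, where $r$ is tiny and a direct bound ($U_r$ small, $h_A$ Lipschitz-ish) works.

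The main obstacle I expect is uniformity of the error terms across all $(\theta,r)$. Specifically, Stein remainders involving $h_A''$ have to be made uniform despite the kink of $h_A$ at $0$ and the $\sqrt{|x|}$ growth, and the constants in the large-$|\theta|$ regime must be carefully matched so that the $o(n^{-4/3})$ bound is uniform. I would handle the kink by noting that $h_A'$ is Lipschitz globally (its derivative $h_Ah_A'-\tfrac12\mathrm{sgn}$ is bounded), so a one-step Taylor error suffices.
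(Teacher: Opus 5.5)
Your skeleton (exact risk expansion, the Rademacher Stein identity with $h_A'$ Lipschitz, then the Riccati identity) is the same as the paper's. The regime analysis, however, contains an error that hides where the real work lies.

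\textbf{The large-$|\theta|$ regime.} You claim the risk is $\approx r/n^2+\theta^2/n^3$, with slack $\frac{|\theta|}{n^2}(1-\frac{|\theta|}{n})$. This drops the term $\frac{r}{n}\E|Z|\,n^{-4/3}\approx r|\theta|/n^3$ from your own bracket. Equivalently, you use the $-|\theta|/n^2$ gain twice: once to cancel the $|Z|$ contribution and once more as slack. Restoring the term, at $r=n-|\theta|$ the first-order pieces add to exactly
\[
\frac{n-|\theta|}{n^2}+\frac{(n-|\theta|)|\theta|}{n^3}+\frac{\theta^2}{n^3}=\frac1n .
\]
Carrying the Riccati identity through then gives, for every $\theta$,
\[
\frac1n-\frac{C_A}{n^{4/3}}+\frac{2|\theta|\,h_A'(\tilde\theta)}{n^{7/3}}+O(n^{-3/2}).
\]
So $\hat\tau_n^A$ is a second-order equalizer along the whole boundary $r=n-|\theta|$. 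There is no slack of order $1/n$, or even of order $n^{-4/3}$, in any regime. This includes $|\theta|=n$: there $r=0$ and the risk is exactly $h_A(n^{1/3})^2/n^{4/3}$, so your crude ``direct bound'' near $|\theta|\approx n$ cannot work either.

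What actually closes the argument uniformly is the following:
\begin{enumerate}
\item $h_A^2$ is globally Lipschitz, because $h_Ah_A'=O(1)$. Hence $\E h_A(Z)^2=h_A(\tilde\theta)^2+O(n^{-1/6})$ and $\E|Z|=|\tilde\theta|+O(n^{-1/6})$, uniformly over $\Theta_n$.
\item $h_A'$ decays: $|h_A'(t)|\lesssim(1+\sqrt{|t|})^{-1}$. Hence $|\tilde\theta|\,h_A'(\tilde\theta)=O(n^{1/6})$ for $|\tilde\theta|\le n^{1/3}$, and the residual term above is $O(n^{-3/2})$.
\end{enumerate}
Your plan never isolates this residual, because it assumes slack that is not there.

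\textbf{The central regime.} You assert $r/n=1-O(n^{-1/3})$, but $r$ ranges freely over $[0,n-|\theta|]$ at fixed $\theta$. The missing step is this: after replacing $Z$ by $\tilde\theta$, the bound
\[
\frac{r}{n^2}+n^{-4/3}\Bigl[\bigl(1-\tfrac rn\bigr)h_A(\tilde\theta)^2+\tfrac rn\bigl(|\tilde\theta|-C_A\bigr)\Bigr]
\]
is affine in $r$. By Riccati its slope is $n^{-2}\bigl(1-2h_A'(\tilde\theta)n^{-1/3}\bigr)$, which is positive for large $n$ since $h_A'$ is bounded. So the supremum over $r$ is attained at $r=n-|\theta|$. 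With these two fixes no regime split is needed, and your argument becomes exactly the paper's proof.
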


\begin{theorem}
\label{thm:second_order_minimax_risk_behavior_detailed_lower_bound}
The priors $\Pi_n$ on $\Theta_n$ defined by
\begin{align*}
    & \Pi_n((0,n)) \propto \phi_A(0)^2 \\
    & \Pi_n((k,n-k)) = \Pi_n((-k,n-k)) \propto \frac{1}{2}\phi_A\left(\frac{k}{n^{2/3}}\right)^2, \quad 1\leq k \leq n.
\end{align*}
have Bayes risks lower bounded by $\frac{1}{n} - \frac{C_A}{n^{4/3}} + o_n(n^{-4/3}).$ 
\end{theorem}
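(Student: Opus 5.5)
The plan is a discrete Van Trees (Bayesian Cram\'er--Rao) argument, carried out directly on the lattice supporting $\Pi_n$. The hope is that the resulting information bound reproduces, to second order, the Rayleigh quotient $\int 4\phi_A'^2+|t|\phi_A^2$.

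\textbf{Step 1: a discrete identity.} All atoms of $\Pi_n$ lie on the boundary $r=n-|\theta|$. There $X=\theta+U_{n-|\theta|}$ always satisfies $X\equiv n \pmod 2$, so moving $\theta$ by $\pm1$ changes the support of $X$. I will therefore use shifts of size $2$.

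Write $\pi_\theta:=\Pi_n((\theta,n-|\theta|))$ and $P_\theta$ for the law of $X$ under $(\theta,n-|\theta|)$. For any estimator $\delta$ of $\theta$, summation by parts in $\theta$ gives
\[
\sum_x\sum_\theta(\delta(x)-\theta)\bigl(\pi_{\theta-2}P_{\theta-2}(x)-\pi_\theta P_\theta(x)\bigr)=2\sum_\theta \pi_{\theta-2}\approx 2 .
\]
The error comes only from the endpoints $|\theta|\approx n$, where $\phi_A$ is superexponentially small. Multiply and divide by $(\pi_\theta P_\theta)^{1/2}$ and apply Cauchy--Schwarz. This yields
\[
\E_{\Pi_n}(\delta-\theta)^2\ \ge\ \frac{4(1-o(1))}{J_n},\qquad
J_n:=\sum_\theta\pi_\theta\sum_x P_\theta(x)\Bigl(\frac{\pi_{\theta-2}P_{\theta-2}(x)}{\pi_\theta P_\theta(x)}-1\Bigr)^2 .
\]
Here $J_n$ is a $\chi^2$-type Fisher information combining the data and the prior. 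Dividing by $n^2$, the Bayes risk for $\theta/n$ is then at least $4/(n^2J_n)$. It remains to show
\[
J_n=4\bigl(n^{-1}+C_An^{-4/3}+o(n^{-4/3})\bigr).
\]

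\textbf{Step 2: expanding $J_n$.} Set $t=\theta/n^{2/3}$. The prior ratio is $\pi_{\theta-2}/\pi_\theta=\phi_A(t-2n^{-2/3})^2/\phi_A(t)^2$. The likelihood ratio $P_{\theta-2}/P_\theta$ is a ratio of shifted binomials whose $r$ differs by $2$. By a local CLT / Taylor expansion of $\log$ of binomial coefficients, it equals approximately $1-2(x-\theta)/r$ plus smaller terms. Expanding the square, $J_n$ splits into three parts.
\begin{itemize}
\item The data term $\approx 4\,\E_\pi[1/r]$. Since $1/r=1/(n-|\theta|)\approx n^{-1}+|\theta|/n^2$, this contributes $4n^{-1}+4n^{-4/3}\int|t|\phi_A^2$.
\item The prior term $\approx \sum_\theta\pi_\theta\bigl(2\cdot 2n^{-2/3}\phi_A'/\phi_A\bigr)^2/4\cdot\ldots$, which becomes $4n^{-4/3}\cdot4\int\phi_A'^2$ after Riemann-sum approximation with mesh $n^{-2/3}$. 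The factor $\tfrac12$ and the normalization $\int_0^\infty\phi_A^2=1$ make $\Pi_n$ approximate the density $\phi_A^2/2$ on $\bR$. This must be tracked so that constants come out with total mass one.
\item The cross term between the prior score and the data score. It vanishes to leading order because $\E_\theta[x-\theta]=0$.
\end{itemize}
The variation of $r$ with $\theta$ contributes only $O(n^{-2})$. Combining, $J_n/4\approx n^{-1}+n^{-4/3}\int(4\phi_A'^2+|t|\phi_A^2)$ with $\int\phi_A^2=1$. The eigenvalue equation $-4\phi_A''+|t|\phi_A=C_A\phi_A$ and integration by parts show this integral equals $C_A$. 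Hence $4/(n^2J_n)=n^{-1}-C_An^{-4/3}+o(n^{-4/3})$.

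\textbf{Main obstacle.} The hard step is making Step 2 rigorous uniformly, which requires three controls.
\begin{itemize}
\item The binomial likelihood-ratio expansion must hold with remainders that are $o(n^{-4/3})$ after averaging. This needs moment bounds on $(U_r/r)^k$ and the exact ratio formula $\binom{r-2}{\cdot}/\binom{r}{\cdot}$ rather than a Gaussian approximation.
\item The Riemann-sum replacement of $\sum_\theta\pi_\theta(\phi_A'/\phi_A)^2$ by $\int\phi_A'^2$ must be justified. Since $\phi_A$ is $C^1$ with a kink in $\phi_A''$ at $0$ and superexponential Airy decay, the tails and the region near $t=0$ can be handled separately.
\item The region where $\phi_A(t)$ is tiny needs care, because $\phi_A'/\phi_A\sim -\sqrt{|t|}/2$ grows there. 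I expect to truncate $\Pi_n$ to $|t|\le K_n$ with $K_n\to\infty$ slowly, and absorb the truncation error using the Airy tail bound.
\end{itemize}
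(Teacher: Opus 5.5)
\textbf{There is a genuine gap in Step~1.} Your overall strategy is the paper's: a discrete Van Trees bound for the Airy prior, with the information splitting into a prior part $\approx 4\int_0^\infty \phi_A'^2$ and a data part $\approx n^{-1}+n^{-4/3}\int_0^\infty t\phi_A^2$. The constants you aim for in Step~2 are the right ones.

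The obstruction in Step~1 is not parity. A unit move along $r=n-|\theta|$ keeps $X\equiv n \pmod 2$, so shifts of size $1$ are fine. The real issue is support nesting: for $\theta\ge 0$ the support of $P_\theta$ is $\{2\theta-n,2\theta-n+2,\dots,n\}$, and it shrinks as $\theta$ grows. Your shifted law $P_{\theta-2}$ is a binomial with $r$ larger by $2$, not smaller. So for $\theta\ge 2$ it charges $x=2\theta-n-4$ and $x=2\theta-n-2$, where $P_\theta(x)=0$; likewise $x=-n$ at $\theta=1$. At those points you cannot divide by $(\pi_\theta P_\theta(x))^{1/2}$. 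Either $J_n=+\infty$ and the bound is vacuous, or, if you sum only over the support of $P_\theta$, the Cauchy--Schwarz step drops the terms $(\delta(x)-\theta)\pi_{\theta-2}P_{\theta-2}(x)$. Those terms are not controlled by the Bayes risk at $(\theta,x)$. No simple fix of the direction works: a uniform shift by $+2$ fails symmetrically for $\theta<0$. Using outward shifts separately on each half-line makes the two halves of your summation-by-parts identity cancel.

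\textbf{The missing idea} is the symmetrization in Lemma~\ref{lemma:discrete_van_trees}. Since $(\theta,X)\stackrel{d}{=}(-\theta,-X)$ under $\Pi_n$, convexity lets you restrict to odd $\delta$. Then fold to $k=|\theta|$ with $Y=(n-X)/2\sim\Bin(n-k,1/2)$, and use the outward unit shift $k\to k+1$, for which $\Bin(n-k-1,1/2)\ll\Bin(n-k,1/2)$. The identity then equals exactly $(1-\rho_0)/n$, because $\E_0\tilde\delta(Y)=0$ by oddness. The information is also available in closed form, since $\E_k[P_{k+1}/P_k]=1$ and the $\chi^2$-divergence is $1/(n-k)$. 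This gives $I_n(\rho)$ with no local-CLT or Taylor remainders to control.

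Alternatively, you could keep your shift and bound the off-support terms separately. Their $P_{\theta-2}$-mass is at most $(n+3)2^{-(n-\theta)}$, $\Pi_n$ puts mass $O(e^{-c\sqrt n})$ on $|\theta|>n/2$, and you may assume $|\delta|\le n$. Some such argument is needed, and your proposal does not contain one.
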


We will establish the upper bound first and handle the lower bound in the next subsection. The proof relies on the analysis of the function in \eqref{eq:variational_minimizer}.

\begin{proposition}
\label{prop:airy_solution_properties} 
The function $\phi_A(t)$ is strictly positive, even, $C^2(\bR)$, and  satisfies the differential equation $-4\phi_A''(t) + |t|\phi_A(t) = C_A \phi_A(t)$. Moreover, $\int_0^\infty 4(\phi_A')^2 + t\phi_A^2 dt = C_A.$ Finally, with $Q(t) = \phi_A(t)^2$ and some positive constants $C,c$,
\begin{equation}
Q(t) + |Q'(t)| + \frac{Q'(t)^2}{Q(t)} \leq Ce^{-c|t|^{3/2}}.
\end{equation}
\end{proposition}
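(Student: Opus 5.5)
We will work directly from the Airy equation $\Ai''(z) = z\Ai(z)$ and the definition $\phi_A(t) = c\,\Ai(z(t))$ with $z(t) = (|t| - C_A)/4^{1/3}$. The argument proceeds in four steps.

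\textbf{The ODE and smoothness.} For $t \neq 0$, the chain rule gives
\[
\phi_A''(t) = c\,4^{-2/3}\Ai''(z) = c\,4^{-2/3} z \Ai(z) = 4^{-1}(|t| - C_A)\phi_A(t),
\]
which is exactly $-4\phi_A'' + |t|\phi_A = C_A\phi_A$. Evenness is immediate from the dependence on $|t|$. The one-sided derivatives at $0$ are $\pm c\,4^{-1/3}\Ai'(-C_A/4^{1/3})$. Since $-C_A/4^{1/3} = a_1'$, these vanish, so $\phi_A \in C^1$. Moreover, $\phi_A''$ is given on both sides by the continuous even expression $4^{-1}(|t|-C_A)\phi_A(t)$. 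A $C^1$ function whose second derivative off $0$ extends continuously across $0$ is $C^2$, by the mean value theorem applied to $\phi_A'$.

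\textbf{Positivity.} We need $\Ai(z) > 0$ for $z \geq a_1'$. Recall two standard facts: all zeros of $\Ai$ are negative with largest zero $a_1$, and the zeros of $\Ai'$ interlace those of $\Ai$ with $a_1 < a_1' < 0$. Hence $\Ai$ has no zero on $[a_1', \infty)$, and since $\Ai(z) > 0$ for large $z$, it is strictly positive on this interval. I would cite these classical facts (e.g.\ from Abramowitz--Stegun/DLMF) rather than reprove them. If a self-contained argument is wanted, a Sturm comparison suffices: on $(a_1, \infty)$ the function $\Ai$ is positive, and $\Ai'$ vanishes at $a_1'$ where $\Ai''(a_1') = a_1'\Ai(a_1') < 0$.

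\textbf{The energy identity.} Multiply the ODE by $\phi_A$ and integrate over $[0, \infty)$. Integrating by parts, $-4\int_0^\infty \phi_A''\phi_A = 4\int_0^\infty (\phi_A')^2 - 4[\phi_A'\phi_A]_0^\infty$. The boundary term vanishes at $0$ because $\phi_A'(0) = 0$, and at $\infty$ by the superexponential decay of $\Ai$ and $\Ai'$. Using the normalization $\int_0^\infty \phi_A^2 = 1$, we get $\int_0^\infty 4(\phi_A')^2 + t\phi_A^2 \, dt = C_A$.

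\textbf{The decay bound.} This is the main technical step, though still routine. Use the standard asymptotics as $z \to +\infty$:
\[
\Ai(z) \sim \frac{e^{-\frac{2}{3}z^{3/2}}}{2\sqrt{\pi}\,z^{1/4}}, \qquad \Ai'(z) \sim -\frac{z^{1/4}e^{-\frac{2}{3}z^{3/2}}}{2\sqrt{\pi}}.
\]
Consequently $\Ai'(z)/\Ai(z) \sim -\sqrt{z}$. Write $Q = \phi_A^2$. Then $Q' = 2\phi_A\phi_A'$ and $Q'^2/Q = 4(\phi_A')^2$, so all three terms are bounded by a constant times $(1+|t|)\,\Ai(z)^2 + \Ai'(z)^2$. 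For $|t|$ large, $z \asymp |t|$, and each term is $O\bigl(|t|^{1/2}e^{-\frac43 z^{3/2}}\bigr) \le C e^{-c|t|^{3/2}}$ after shrinking $c$ to absorb polynomial factors and the shift by $C_A$. On a compact set the bound holds by continuity and positivity, after enlarging $C$.

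The only care needed is to handle the shift $z = (|t| - C_A)4^{-1/3}$ uniformly. Since $z^{3/2} \geq \tfrac12 (|t|/4^{1/3})^{3/2}$ for $|t|$ large, this yields the stated bound.
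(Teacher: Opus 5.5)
Your proposal is correct and follows essentially the paper's route: the chain rule with $\Ai''=z\Ai$ for the ODE, $\Ai'(a_1')=0$ for $C^1$ matching at $0$, integration by parts for the energy identity (you on $[0,\infty)$ using $\phi_A'(0)=0$, the paper on $\bR$ using evenness), and the standard Airy asymptotics for the decay bound, with your mean-value-theorem justification of $C^2$ slightly more careful than the paper's. The only blemish is the optional ``self-contained'' Sturm remark, which presupposes $\Ai(a_1')>0$ and so is circular; a clean version is that $\Ai$ is positive on $(a_1,\infty)$, vanishes at $a_1$ and tends to $0$ at $+\infty$, so it has an interior maximum there, which is a (necessarily negative) zero of $\Ai'$ and forces $a_1'>a_1$. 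Citing the classical interlacing fact, as you and the paper both do, already suffices.
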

\begin{proof}
Standard properties of the Airy function \cite{olver1997asymptotics} show that $\Ai$ is $C^2$, $\Ai(z) > 0, \Ai'(z) \leq 0$ for all $z \geq a_1'$, and the following asymptotics:
\begin{equation}
\label{eq:airy_asymptotics}
\Ai(x)=\frac{x^{-1/4}}{2\sqrt{\pi}}e^{-2x^{3/2}/3}
\left(1+O(x^{-3/2})\right),\qquad
\Ai'(x)=-\frac{x^{1/4}}{2\sqrt{\pi}}e^{-2x^{3/2}/3}
\left(1+O(x^{-3/2})\right).
\end{equation}
Write $\phi_A(t) = c\Ai\left(\frac{|t|}{4^{1/3}} + a_1'\right)$. It follows immediately that $\phi_A(t)$ is strictly positive and even. Computing the first derivative of $\phi_A(t)$ yields
\[
\phi_A'(t) = 
\begin{cases}
& c4^{-1/3}\Ai'\left(4^{-1/3}t + a_1' \right), \quad t \geq 0 \\
& -c4^{-1/3}\Ai'\left(-4^{-1/3}t + a_1' \right), \quad t \leq 0. 
\end{cases}
\]
The limit from the right and left is equal to $c4^{-1/3}\Ai'(a_1')$ and $-c4^{-1/3}\Ai'(a_1')$ respectively, which are both zero by definition of $a_1'$. Thus $\phi_A \in C^1(\bR)$. Computing the second derivative gives
\[
\phi_A''(t) = c4^{-2/3}\Ai''\left(4^{-1/3}|t| + a_1' \right),
\]
which is continuous. Therefore, $\phi_A$ is $C^2$. Set $z := (t - C_A)/4^{1/3}$. By the definition of the Airy function, $\Ai''(z) = z\Ai(z)$ for $z \in \bR$. Therefore, $\phi_A''(t) - \frac{t - C_A}{4}\phi_A(t) = 0$ for $t \geq 0$. Applying the same argument for $t \leq 0$ and combining yields
\begin{equation}
\label{eq:eigenvalue_equation_for_phi_A}
-4\phi_A''(t) + |t|\phi_A(t) = C_A\phi_A(t), \forall t \in \bR
\end{equation}
Next, multiply \eqref{eq:eigenvalue_equation_for_phi_A} by $\phi_A(t)$ and integrate over $\bR$. Because $\phi_A(t), \phi_A'(t) \rightarrow 0$ as $|t| \rightarrow \infty$ integrating by parts yields
\begin{align*}
    & -4\int_\bR \phi_A''(t) \phi_A(t) dt + \int_\bR |t|\phi_A(t)^2 = C_A \int_\bR \phi_A(t)^2 \\
    \Longrightarrow & 4\int_\bR \phi_A'(t)^2 dt + \int_\bR |t|\phi_A(t)^2 = C_A \int_\bR \phi_A(t)^2.
\end{align*}
Evenness and the normalization $\int_0^\infty \phi_A(t)^2 = 1$ gives 
\[
\int_0^\infty 4\phi_A'(t)^2 + t\phi_A^2 dt = C_A.
\]
For the last claim, the Airy asymptotics in Eq. \eqref{eq:airy_asymptotics}, after absorbing polynomial factors into the exponential and enlarging constants if necessary, yield
\[
|\phi_A(t)| + |\phi'_A(t)| \leq Ce^{-c|t|^{3/2}}
\]
for some constants $C,c > 0.$ With $Q(t) = \phi_A(t)^2, Q'(t) = 2\phi_A(t)\phi_A'(t), Q'(t)^2/Q(t) = 4\phi_A'(t)^2$, the desired bound is immediate.

\end{proof}

\begin{proposition}
\label{prop:second_order_perturbation_h_properties} 
$h_A$ is odd and positive for $t > 0.$ Moreover, the following are true:
\begin{enumerate}
    \item $h_A(t)$ satisfies the differential equation $h_A(t)^2 - 2h_A'(t) - |t| = -C_A.$
    \item $h_A(t)$ satisfies the estimates $|h_A(t)| \leq K(1 + \sqrt{|t|})$ and $|h_A'(t)| \leq K/(1 + \sqrt{|t|})$.
    \item $h_A^2,h_A'$ are globally Lipschitz continuous.
\end{enumerate}
\end{proposition}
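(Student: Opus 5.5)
Everything follows from the explicit form $h_A = -2\phi_A'/\phi_A$ together with Proposition \ref{prop:airy_solution_properties}. The plan is to establish the algebraic properties first, then the growth bounds through the Airy asymptotics, and to deduce the Lipschitz claims last.

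\emph{Oddness, positivity, and the Riccati equation.} Since $\phi_A$ is even, positive and $C^1$, $\phi_A'$ is odd, so $h_A$ is odd. For $t>0$ we have $\phi_A'(t) = c4^{-1/3}\Ai'(4^{-1/3}t + a_1')$. The argument lies in $(a_1',\infty)$, where $\Ai' \le 0$ by the facts quoted in the previous proof. To get strict negativity, note that $\Ai'' = z\Ai$ has a sign on $(a_1',0)$ and on $(0,\infty)$, so $\Ai'$ is strictly monotone on each piece. Combined with $\Ai'(a_1')=0$, $\Ai'(z)\to 0$ as $z\to\infty$, and the fact that $a_1'$ is the largest zero of $\Ai'$, this gives $\Ai'<0$ on $(a_1',\infty)$. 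Hence $h_A>0$ for $t>0$.

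Because $\phi_A\in C^2$ and $\phi_A>0$, the function $h_A$ is $C^1$ with
\[
h_A' = -2\frac{\phi_A''}{\phi_A} + 2\frac{(\phi_A')^2}{\phi_A^2} = -2\frac{\phi_A''}{\phi_A} + \frac{h_A^2}{2}.
\]
Therefore $h_A^2 - 2h_A' = 4\phi_A''/\phi_A = |t| - C_A$, using the ODE from Proposition \ref{prop:airy_solution_properties}. This is item 1.

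\emph{Growth estimates.} By oddness it suffices to treat $t\ge 0$. On compact sets, $h_A$ and $h_A'$ are continuous and hence bounded. For large $t$, set $z = 4^{-1/3}t + a_1'$ and apply the asymptotics \eqref{eq:airy_asymptotics}. They give $\Ai'(z)/\Ai(z) = -\sqrt z\,(1+O(z^{-3/2}))$, so
\[
h_A(t) = 2\cdot 4^{-1/3}\sqrt z\,(1+O(z^{-3/2})) = O(\sqrt t).
\]
This yields $|h_A|\le K(1+\sqrt{|t|})$. For $h_A'$, I will not differentiate the asymptotic expansion. Instead I use item 1 in the form $h_A' = \tfrac12(h_A^2 - |t| + C_A)$, which reduces the problem to showing that $h_A(t)^2 - t + C_A = O(t^{-1/2})$. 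This needs the asymptotic ratio to second order, $\Ai'/\Ai = -\sqrt z - \tfrac{1}{4z} + O(z^{-5/2})$, which is standard \cite{olver1997asymptotics}. With $t = 4^{1/3}(z - a_1')$ we get
\[
h_A^2 = 4^{1/3}\big(z + \tfrac{1}{2}z^{-1/2} + O(z^{-2})\big), \qquad t - C_A = 4^{1/3} z,
\]
so the difference is $O(z^{-1/2}) = O(t^{-1/2})$. This is item 2.

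A cleaner alternative, which avoids higher-order asymptotics, is a Riccati comparison. Set $g = h_A/2 = -\phi_A'/\phi_A$, which satisfies $g' = g^2 - (|t| - C_A)/4$. One then shows $g$ stays between $\sqrt{(t-C_A)/4}$ and that value plus a small correction, using the fact that solutions of a Riccati equation which deviate from the slow manifold blow up. I expect this step, obtaining the sharp decay of $h_A'$ (as opposed to mere boundedness), to be the main obstacle. I will take the asymptotic-expansion route first and fall back on the comparison argument if needed.

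\emph{Lipschitz claims.} $h_A'$ is bounded by item 2, so $h_A$ is globally Lipschitz. For $h_A^2$, its derivative $2h_Ah_A'$ is bounded by $2K^2(1+\sqrt{|t|})/(1+\sqrt{|t|}) = 2K^2$, which gives item 3 for $h_A^2$. For $h_A'$, item 1 gives $h_A' = \tfrac12(h_A^2 - |t| + C_A)$. This is a combination of the Lipschitz function $h_A^2$ and the 1-Lipschitz function $|t|$, so $h_A'$ is globally Lipschitz.
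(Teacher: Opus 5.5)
Your proof is correct and follows essentially the paper's route. Oddness and positivity come from the sign of $\Ai'$ on $(a_1',\infty)$, and item 1 comes from direct differentiation of $h_A=-2\phi_A'/\phi_A$. The bound on $h_A'$ comes from using the Riccati identity to reduce it to $h_A^2-|t|+C_A=O(|t|^{-1/2})$; the paper writes this as $m'=m^2-x$ for $m=-\Ai'/\Ai$. Your derivation of Lipschitzness of $h_A'$ directly from $h_A'=\frac{1}{2}(h_A^2-|t|+C_A)$ is a slightly cleaner alternative to the paper's piecewise bound on $h_A''$.

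One small correction: the second-order Airy expansion is not needed. Squaring the relation $-\Ai'(z)/\Ai(z)=\sqrt z\,(1+O(z^{-3/2}))$, which you already wrote down, gives $m^2=z+O(z^{-1/2})$ immediately, and this is exactly what the paper uses. So the Riccati-comparison fallback is also unnecessary.
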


\begin{proof}
Firstly, $h_A$ is odd because $\phi_A$ is even. The properties $\Ai(z) > 0, \Ai'(z) < 0$ for all $z > a_1'$ show that $h_A$ is positive for $t > 0.$ Direct differentiation of $h_A(t)$ yields
\[
h_A'(t) = -2\left(\frac{\phi_A''}{\phi_A} - \left(\frac{\phi_A'}{\phi_A}\right)^2 \right) = -\frac{2\phi_A''}{\phi_A} + \frac{1}{2}h_A^2
\]
Thus, $h_A^2(t) - 2h_A'(t) = 4\frac{\phi_A''(t)}{\phi_A(t)}$. From the eigenvalue equation in Proposition \ref{prop:airy_solution_properties}, we obtain $h_A^2(t) - 2h_A'(t) - |t| = -C_A$, which is the first claim. For the second claim, recall the Airy asymptotics from Eq. \eqref{eq:airy_asymptotics}. Dividing immediately gives
\[
m(x) := -\frac{\Ai'(x)}{\Ai(x)} = x^{1/2} + O(x^{-1}), x \rightarrow \infty.
\]
Writing $h_A(t) = \frac{2}{4^{1/3}}m\left(\frac{t - C_A}{4^{1/3}}\right), t \geq 0$ gives the estimate $|h_A(t)| \lesssim 1 + \sqrt{|t|}.$ Taking the derivative $m'(x)$ and using the Airy differential equation, we have
\begin{align*}
    m'(x) & = \frac{-\Ai''(x)\Ai(x) + \Ai'(x)^2}{\Ai(x)^2} \\
    & = -x + m(x)^2  = O(x^{-1/2}),
\end{align*}
which gives the estimate on $h_A'(t).$ Finally, the derivative of $h_A(t)^2$ is $2h_A(t)h_A'(t)$. The estimates just proved show that $|2h_A(t)h_A'(t)| = O(1),$ which proves Lipschitzness of $h_A^2$. From the differential equation $h_A(t)^2 - 2h_A'(t) - |t| = -C_A,$ rearranging and taking the derivative yields $h_A''(t) = h_A(t)h_A'(t) - \frac{1}{2}\text{sgn}(t), t \neq 0$, so $h_A'(t)$ is Lipschitz on each half-line. Continuity and triangle inequality shows the Lipschitz bound for any interval crossing $0.$
\end{proof}

With this, we will prove the upper bound on the minimax risk in Theorem \ref{thm:second_order_minimax_risk_behavior_detailed_upper_bound}, which elaborates on the risk expansion described in Section \ref{sec:second_order_risk}.

\begin{proof}[Proof of Theorem \ref{thm:second_order_minimax_risk_behavior_detailed_upper_bound}]
Let $\tilde \theta = \theta/n^{2/3}.$ We expand the risk of the estimator $\hat \tau_n^A(X)$ as
\begin{align*}
    & \E[(\hat\tau_n^A(X) - \theta/n)^2] \\
   = \ & \E\left[\left(\frac{U_r}{n} - \frac{1}{n^{2/3}} h_A\left( \frac{X}{n^{2/3}}\right)\right)^2 \right] \\
    = \ & \frac{r}{n^2} + \frac{1}{n^{4/3}}\E h_A\left(\tilde \theta + \frac{U_r}{n^{2/3}} \right)^2 - \frac{2}{n^{5/3}}\E\left[ U_r h_A\left(\tilde \theta + \frac{U_r}{n^{2/3}} \right) \right].
\end{align*}

Applying Lemma \ref{lemma:stein_rademacher_sums} and the fundamental theorem of calculus yields
\[
\E\left[ U_r h_A\left(\tilde \theta + \frac{U_r}{n^{2/3}} \right) \right] = \frac{r}{2n^{2/3}} \E \int_{-1}^1 h_A'\left(\tilde \theta + \frac{U_{r-1} + s}{n^{2/3}} \right) ds.
\]
Using $\E|U_r| \leq \sqrt{r} \leq \sqrt{n}$ and the Lipschitzness of $h',h^2$ from Proposition \ref{prop:second_order_perturbation_h_properties}, we have
\begin{align*}
    \E h_A\left(\tilde \theta + \frac{U_r}{n^{2/3}} \right)^2 = h_A(\tilde \theta)^2 + O(n^{-1/6}) \\
    \E \int_{-1}^1 h_A'\left(\tilde \theta + \frac{U_{r-1} + s}{n^{2/3}} \right) ds = 2h_A'(\tilde \theta) + O(n^{-1/6}).
\end{align*}
Assembling yields
\begin{align*}
\E[(\hat\tau_n^A(X) - \theta/n)^2] & = \frac{r}{n^2} + \frac{h_A(\tilde \theta)^2 - 2rh_A'(\tilde \theta)/n}{n^{4/3}} + O(n^{-3/2}) \\
& = \frac{r}{n^2}\left(1 - \frac{2h_A'(\tilde \theta)}{n^{1/3}} \right) + \frac{h_A(\tilde \theta)^2}{n^{4/3}} + O(n^{-3/2})
\end{align*}
which holds uniformly over $(\theta,r) \in \Theta_{n}.$ Proposition \ref{prop:second_order_perturbation_h_properties} shows $h_A'$ is bounded, so $1 - \frac{2h_A'(\tilde \theta)}{n^{1/3}} > 0$ for large enough $n$. Taking the supremum over $r$  over the bound $r \leq n - |\theta|$ yields
\[
\sup_{ \Theta_n} \E[(\hat\tau_n^A(X) - \theta/n)^2] = \frac{1}{n} + \sup_{\theta}\left( \frac{h_A(\tilde \theta)^2 -2h_A'(\tilde \theta) - |\tilde \theta|}{n^{4/3}} + \frac{2|\tilde \theta| h_A'(\tilde \theta)}{n^{5/3}} \right) + O(n^{-3/2}).
\]
By Proposition \ref{prop:second_order_perturbation_h_properties}, $h_A(t)^2 - 2h'_A(t) - |t| = - C_A$ for all $t.$ Thus, 
\[
\sup_{ \Theta_n} \E[(\hat\tau_n^A(X) - \theta/n)^2] = \frac{1}{n} - \frac{C_A}{n^{4/3}} + \frac{2\sup_{\theta} |\tilde \theta| h_A'(\tilde \theta)}{n^{5/3}}  + O(n^{-3/2}).
\]
By the same proposition, $ |th_A'(t)| \leq K\sqrt{|t|}$. Combining this and $\tilde \theta = O(n^{1/3})$ shows $\sup_{\theta} |\tilde \theta| h_A'(\tilde \theta) = O(n^{1/6})$. Therefore $\frac{2\sup_{\theta} |\tilde \theta| h_A'(\tilde \theta)}{n^{5/3}} = O(n^{-3/2})$. This shows the upper bound.
\end{proof}

\subsection{Proof of the Lower Bound.}

\begin{lemma}[Discrete Van Trees Inequality]
\label{lemma:discrete_van_trees}
Let $\theta \in \bb{Z} \cap [-n,n]$ and $r = n - |\theta|$. For any positive probability vector $\rho = (\rho_0,\dots,\rho_n)$, let $G$ be the symmetric prior given by $\Prob_G(\theta = 0) = \rho_0$ and
\[
\Prob_G(\theta = k) = \Prob_G(\theta = -k) = \rho_k/2, \quad 1 \leq k \leq n,
\]
Suppose we observe $X = \theta + \sum_{i=1}^{r} \e_i$ with $\theta \sim G$. Then for every estimator $\delta$,
\begin{equation}
\label{eq:van_trees_inequality}
\E\left[\left(\delta(X) - \theta/n \right)^2 \right] \geq \frac{(1-\rho_0)^2}{n^2 I_n(\rho)}
\end{equation}
where
\begin{equation}
\label{eq:discrete_information}
I_n(\rho) := \sum_{k=0}^n \frac{(\rho_{k+1} - \rho_k)^2}{\rho_k} + \sum_{k=0}^{n-1} \frac{\rho_{k+1}^2}{\rho_k(n-k)}
\end{equation}
and $\rho_{n+1}$ is interpreted as $0.$
\end{lemma}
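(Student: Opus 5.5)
The plan is a discrete analogue of the Cauchy--Schwarz proof of Van Trees' inequality. Work with the joint pmf $p(\theta,x) = \Prob_G(\theta)P_\theta(x)$ on $(\theta,X)$, where $P_\theta$ is the law of $\theta + U_{n-|\theta|}$. I will construct a function $\psi(\theta,x)$ with three properties:
\begin{enumerate}
\item[(i)] $\sum_\theta \psi(\theta,x)p(\theta,x) = 0$ for every $x$, so that $\E[\delta(X)\psi(\theta,X)] = 0$ for every estimator $\delta$;
\item[(ii)] $-\E[\theta\psi(\theta,X)] = 1-\rho_0$;
\item[(iii)] $\E[\psi^2] = I_n(\rho)$.
\end{enumerate}
Given these, $(1-\rho_0)/n = \E[(\delta(X)-\theta/n)\psi]$, and Cauchy--Schwarz gives \eqref{eq:van_trees_inequality} at once.

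\textbf{Choice of $\psi$.} Use an outward shift on each half-line. For $\theta = k \ge 1$, set
\[
\psi(k,x) = \frac{p(k+1,x) - p(k,x)}{p(k,x)},
\]
and for $\theta = -k$ use the mirror image with a sign flip, $\psi(-k,x) = -\frac{p(-k-1,x)-p(-k,x)}{p(-k,x)}$. At $\theta=0$, let $\psi(0,x)$ be the analogous ratio built from $p(\pm1,x)$ with mass $\rho_1/2$ on each side, chosen so that the telescoping below closes. Throughout, $p(\pm(n+1),x)=0$, consistent with $\rho_{n+1}=0$.

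The outward direction is forced by the supports. Since $X \equiv n \pmod 2$ always, there is no parity problem. For $k\ge0$ the support of $P_k$ is $\{2k-n,\dots,n\}$ (in steps of $2$), which contains the support of $P_{k+1}$. Hence the ratios $p(k+1,x)/p(k,x)$ are well defined wherever they are needed, and $\E[\psi^2]<\infty$. Shifting inward instead would enlarge the support and could make $\E[\psi^2]$ infinite.

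\textbf{Checking (i) and (ii).} Property (i) follows by telescoping $\sum_\theta (p(\theta\pm1,x)-p(\theta,x))$ over each half-line, with the boundary terms vanishing. For (ii), I reindex $\sum_k k\,p(k+1,x) = \sum_k (k-1)p(k,x)$ on each half-line. This leaves the total mass of $\{\theta\neq 0\}$, which is $1-\rho_0$; the $\theta=0$ atom contributes nothing because it is multiplied by $\theta$. I expect most of the bookkeeping to sit in this step and in (i): the indexing at $\theta=0$ and the factor $1/2$ in $\rho_k/2$ are where (ii) could come out as $1$ rather than $1-\rho_0$. If the symmetric construction at $0$ fails, the fallback is to let $\psi(0,\cdot)=0$ and verify (i) directly by using the symmetry of $G$ together with $P_{-k}(x)=P_k(-x)$.

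\textbf{Checking (iii), the main computation.} Write $m = n-k$ and
\[
\frac{p(k+1,x)}{p(k,x)} = \frac{\rho_{k+1}}{\rho_k}\,L_m(x), \qquad L_m = \frac{dP_{k+1}}{dP_k}.
\]
The key fact is the law of $L_m$ under $P_k$. If $X = k + U_m$ has $j$ positive signs among the $m$ Rademacher variables, then $X = k+2j-m$, and under $P_{k+1}$ the same $x$ requires $j-1$ positive signs among $m-1$. Therefore
\[
L_m = \frac{2\binom{m-1}{j-1}}{\binom{m}{j}} = \frac{2j}{m},
\]
with $j\sim\Bin(m,1/2)$. This gives $\E L_m = 1$ and $\E L_m^2 = 1 + 1/m$.

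Expanding, the contribution of atom $k$ is
\[
\frac{\E_{P_k}\bigl(\rho_{k+1}L_m - \rho_k\bigr)^2}{\rho_k} = \frac{(\rho_{k+1}-\rho_k)^2}{\rho_k} + \frac{\rho_{k+1}^2}{\rho_k(n-k)}.
\]
The endpoint $k=n$ contributes only $\rho_n$, since $\rho_{n+1}=0$. The halves of the prior mass on $\pm k$ recombine, and summing over $k$ yields exactly $I_n(\rho)$ as defined in \eqref{eq:discrete_information}. That establishes (iii) and completes the argument.
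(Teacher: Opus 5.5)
Your argument is correct, but it takes a different route from the paper's, and your equality claim in (iii) is off in a harmless direction. The paper proceeds in three steps:
\begin{itemize}
\item it first replaces $\delta$ by its odd part $(\delta(x)-\delta(-x))/2$, which has no larger Bayes risk by the symmetry $(\theta,X)\stackrel{(d)}{=}(-\theta,-X)$ and convexity;
\item it folds the model onto $\tilde\theta\in\{0,\dots,n\}$ with $Y=(n-X)/2\sim\Bin(n-\tilde\theta,1/2)$;
\item it uses the one-sided score $S_k=(\rho_{k+1}P_{k+1}-\rho_kP_k)/(\rho_kP_k)$ for every $k$, including $k=0$.
\end{itemize}
That score does not integrate to zero. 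Telescoping leaves $-\rho_0\E_0\tilde\delta(Y)$, which vanishes only because $\delta$ is odd. You instead keep the unfolded model and let (i) force $\psi(0,x)=(p(1,x)-p(-1,x))/p(0,x)$. The score then integrates to zero for every $x$, so no symmetrization of $\delta$ is needed. Property (ii) comes out as $1-\rho_0$, as you hoped: each half-line contributes $-\frac12\sum_{k\ge1}\rho_k$ to $\E[\theta\psi]$.

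The correction concerns the $\theta=0$ atom, which your atom-$k$ computation does not cover. Since $P_{\pm1}(x)/P_0(x)=1\pm x/n$, you get $\psi(0,x)=\rho_1x/(n\rho_0)$. Its contribution to $\E[\psi^2]$ is $\rho_1^2/(n\rho_0)$, not $(\rho_1-\rho_0)^2/\rho_0+\rho_1^2/(n\rho_0)$. Thus $\E[\psi^2]=I_n(\rho)-(\rho_1-\rho_0)^2/\rho_0\le I_n(\rho)$. This upper bound is all Cauchy--Schwarz needs, and it actually gives a slightly sharper inequality than the lemma states.

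Your $\psi(0,\cdot)$ is exactly the odd part of the paper's $S_0=\frac{\rho_1}{\rho_0}(1+x/n)-1$. The constant part $\rho_1/\rho_0-1$ is orthogonal to odd estimators, and it is what produces the extra $(\rho_1-\rho_0)^2/\rho_0$ in $I_n$. The paper's route therefore gives a uniform one-sided sum that matches the stated $I_n$ exactly. Yours avoids the odd-reduction step and yields marginally more information, at the cost of a separate $\theta=0$ computation.

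Discard your fallback $\psi(0,\cdot)=0$. With that choice $\sum_\theta\psi p=p(-1,x)-p(1,x)\neq0$. Even for odd $\delta$ one gets $\E[\delta(X)\psi]=-\rho_1\E_1\delta(X)$, so the argument does not close.
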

\begin{proof}
Because $(\theta,X) \stackrel{(d)}{=} (-\theta,-X)$, the reflected estimator $-\delta(-x)$ has the same average risk as that of $\delta$. By convexity, the estimator $(\delta(X) - \delta(-X))/2$ has no larger average risk than $\delta$. Therefore, we may prove the inequality \eqref{eq:van_trees_inequality} for odd rules. For $\theta \geq 0$, define $Y := (n-X)/2$, so that $Y \sim \Bin(n - \theta,1/2)$. Define $P_k(j) := \Prob_k(Y = j) =  2^{-(n-k)}\binom{n-k}{j}$ and $\tilde{\delta}(x) = \delta(n - 2x).$ By oddness, the average risk of $\delta$ under $G$ can be written
\[
    \sum_{k = 0}^n \rho_k \E_k\left[\left(\tilde{\delta}(Y) - \frac{k}{n} \right)^2 \right].
\]
This is the average risk of the estimator $\tilde \delta$ when generating $\tilde{\theta} \in \set{0,\dots,n}$ from the probability vector $\rho$ and then $Y \sim \Bin(n - \tilde \theta,1/2),$ so
\[
 \sum_{k = 0}^n \rho_k \E_k\left[\left(\tilde{\delta}(Y) - \frac{k}{n} \right)^2 \right] = \E_\rho\left[\left(\tilde \delta(Y) - \frac{\tilde \theta}{n} \right)^2\right] 
\]
Next, define $S_k(j) := \frac{\rho_{k+1}P_{k+1}(j) - \rho_{k}P_{k}(j)}{\rho_k P_k(j)}$ for all $0 \leq k \leq n$, with $\rho_{n+1}$ interpreted as $0$. Then, 
\begin{align*}
\E_\rho\left[\left(\tilde \delta(Y) - \frac{\tilde \theta}{n} \right) S_{\tilde \theta}(Y)\right] & = \sum_{j,k} \left(\tilde \delta(j) - \frac{k}{n}\right)\left(\rho_{k+1}P_{k+1}(j) - \rho_{k}P_{k}(j) \right) \\
& = \sum_{k = 0}^{n-1} \rho_{k+1}\E_{k+1}\left[ \tilde \delta(Y) - \frac{k}{n} \right] - \sum_{k = 0}^n \rho_{k}\E_{k}\left[ \tilde \delta(Y) - \frac{k}{n} \right] \\
& = \frac{1}{n}\sum_{k=1}^n \rho_k - \rho_0 \E_0 \tilde \delta(Y) \\
& = \frac{1 - \rho_0}{n}.
\end{align*}
The last line follows from oddness of $\delta$ and symmetry of the law of $X$ when $\tilde \theta = 0$, so $\E_0 \tilde \delta(Y) = 0.$ Applying Cauchy-Schwarz yields
\[
\left(\frac{1 - \rho_0}{n}\right)^2 \leq \E_\rho\left[\left(\tilde \delta(Y) - \frac{\tilde \theta}{n} \right)^2\right] \E_\rho\left[ S_{\tilde \theta}(Y)^2\right].
\]
It remains to show that $\E_\rho\left[ S_{\tilde \theta}(Y)^2\right] = I_n(\rho).$ Expanding the definition,
\begin{align*}
\E_\rho\left[ S_{\tilde \theta}(Y)^2\right] & = \sum_{k=0}^{n} \rho_k \E\left(\frac{\rho_{k+1}}{\rho_k} \frac{P_{k+1}(Y)}{P_k(Y)} - 1\right)^2 \\
& = \sum_{k=0}^{n-1} \rho_k \E\left(\frac{2\rho_{k+1}}{\rho_k} \frac{n - k - Y}{n - k} - 1\right)^2 + \rho_n \\
& = \rho_n + \sum_{k=0}^{n-1} \left(\frac{4\rho_{k+1}^2}{\rho_k} \E\left(1 - \frac{Y}{n-k} \right)^2 - 2\rho_{k+1} + \rho_k\right) \\
& = \sum_{k=0}^{n-1} \frac{\rho^2_{k+1}}{\rho_k(n-k)} + \sum_{k=0}^n \frac{(\rho_{k+1} - \rho_k)^2}{\rho_k}
\end{align*}
as desired.
\end{proof}

\begin{proof}[Proof of Theorem \ref{thm:second_order_minimax_risk_behavior_detailed_lower_bound}]
Let $\e_n = n^{-2/3}$ and $Q(x) = \phi_A(x)^2$. The standard Airy asymptotics \eqref{eq:airy_asymptotics} imply that $Q(x)$ and its derivative are $O(e^{-cx^{3/2}}), x \geq 1$. Consider the probability vector $\rho_n = (\rho_{n,0},\dots,\rho_{n,n})$ defined by 
\[
\rho_{n,k} := \frac{Q(\e_n k)}{Z_n}
\]
with $Z_n$ the normalization constant $\sum_{k=0}^n Q(\e_nk)$.  The main goal is to apply the discrete Van Trees Inequality in Lemma \ref{lemma:discrete_van_trees}. We first claim that
\begin{equation}
\label{eq:e_nZ_n_term}
\e_n Z_n \rightarrow \int_0^\infty Q(t) dt = 1.
\end{equation}
\sloppy By convergence of Riemann sums, we know $\e_n \sum_{k = 0}^\infty Q(\e_n k) \rightarrow \int_0^\infty Q(t) dt = 1$. But $\left|\e_n (Z_n - \sum_{k = 0}^\infty Q(\e_n k))\right| \leq \e_n \sum_{k=n+1}^\infty Q(\e_n k) \lesssim \e_n \sum_{k=n+1}^\infty \exp(-ck^{3/2}/n) \rightarrow 0.$ This establishes \eqref{eq:e_nZ_n_term}; as an immediate result, $\rho_{n,0} = O(n^{-2/3}).$\\

\noindent \textit{First Information Term.} Now, the first term in the information $I_n$ from Eq. \eqref{eq:discrete_information} is
\[
I_{1,n} := \frac{1}{Z_n} \sum_{k=0}^{n-1}\frac{(Q(\e_n(k+1)) - Q(\e_n k))^2}{Q(\e_n k)}  + \rho_{n,n}.
\]
Since $\rho_{n,n}$ is exponentially small due to \eqref{eq:airy_asymptotics},
\[
\frac{I_{1,n}}{\e_n^2} = \frac{1}{\e_nZ_n} \left[\sum_{k=0}^{n-1}\e_n \frac{(Q(\e_n(k+1)) - Q(\e_n k))^2}{\e_n^2Q(\e_n k)} \right] + o(1).
\]
If we can show
\begin{equation}
\label{eq:handling_I_1_term}
  \frac{1}{\e_nZ_n} \left[\sum_{k=0}^{n-1}\e_n \frac{(Q(\e_n(k+1)) - Q(\e_n k))^2}{\e_n^2Q(\e_n k)} \right]  = \int_0^\infty \frac{Q'(t)^2}{Q(t)} dt + o_n(1),
\end{equation}
it would immediately follow that 
\begin{equation}
\label{eq:first_information_term_analysis}
    I_{1,n} = \frac{4\int_0^\infty \phi'_A(t)^2 dt}{n^{4/3}} + o_n(n^{-4/3}).
\end{equation}
Towards \eqref{eq:handling_I_1_term} note that $\e_n Z_n \rightarrow 1$ by our previous argument, so we can focus on the bracketed term. Let $a_{n,k} := \frac{\rho_{n,k+1}}{\rho_{n,k}} = \frac{Q(\e_n (k+1))}{Q(\e_n k)}, 0\leq k \leq n-1$. Since $Q'/Q = -h_A$ on the positive real line, 
\begin{equation}
a_{n,k} = \exp\left(-\int_{\e_nk}^{\e_n(k+1)} h_A(t) dt \right)
\end{equation}
Since $h_A(t) \geq 0$ for $t \geq 0, a_{n,k} \in [0,1]$.  Then
\[
\sum_{k=0}^{n-1}\e_n \frac{(Q(\e_n(k+1)) - Q(\e_n k))^2}{\e_n^2Q(\e_n k)} = \sum_{k=0}^{n-1}\e_n Q(\e_n k)\left(\frac{1 - a_{n,k}}{\e_n} \right)^2.
\]
Note that $h_A$ is uniformly continuous on any compact interval, which implies $\frac{1}{\e_n}\int_{\e_nk}^{\e_n(k+1)} h_A(t) dt = h_A(\e_nk) + o(1)$ uniformly for $\e_n k \leq M$. Therefore, $(1-a_{n,k})/\e_n = h_A(\e_n k) + o(1)$ uniformly for $\e_n k \leq M$. Breaking up the sum into parts, we have 
\begin{align*}
& \sum_{k=0}^{n-1}\e_n Q(\e_n k)\left(\frac{1 - a_{n,k}}{\e_n} \right)^2 \\
= & \sum_{k: \e_n k < M}\e_n Q(\e_n k)\left(-\frac{Q'(\e_n k)}{Q(\e_n k)} + o(1)\right)^2 + R_n \\
= & \sum_{k: \e_n k < M}\e_n \frac{Q'(\e_n k)^2}{Q(\e_n k)} + R_n + o(1) \\
= & \int_0^M \frac{Q'(t)^2}{Q(t)} dt + R_n + o_n(1).
\end{align*}
with a remainder term $R_n$ that satisfies
\begin{align*}
 R_n & := \sum_{k:\e_nk > M} \e_n Q(\e_n k)\left(\frac{1 - a_{n,k}}{\e_n} \right)^2 \\
    & \leq \sum_{k:\e_nk > M} \e_n Q(\e_n k)\left(\frac{\int_{\e_nk}^{\e_n(k+1)} h_A(t) dt}{\e_n} \right)^2 \\ 
    & \lesssim \sum_{k:\e_nk > M} \e_n Q(\e_n k)\e_n k \\
    & = \int_M^\infty tQ(t) dt + o(1).
\end{align*}
The second to last line follows from the growth bound $h_A(t) \lesssim K(1 + \sqrt{t})$. Because $Q(t)$ has exponential tails, $\int_0^\infty tQ(t) dt < \infty$ and so taking $n \rightarrow \infty$ then $M \uparrow \infty$ yields
\[
\sum_{k=0}^{n-1}\e_n \frac{(Q(\e_n(k+1)) - Q(\e_n k))^2}{\e_n^2Q(\e_n k)} = \int_0^\infty \frac{Q'(t)^2}{Q(t)} dt + o_n(1),
\]
as desired.\\

\noindent \textit{Second Information Term.} Next, we handle the second term in $I_n$ arising from the likelihood, which we will rewrite 
\[
I_{2,n} := \sum_{k=0}^{n-1} \frac{\rho_{n,k}}{n - k} \cdot \left( \frac{\rho_{n,k+1}}{\rho_{n,k}}\right)^2.
\]
The growth bound on $h_A(t)$ in Proposition \ref{prop:second_order_perturbation_h_properties} and the elementary inequality $1 - e^{-2x} \leq 2x$ yields
\[
1 - a_{n,k}^2 \leq K\e_n(1 + \sqrt{\e_n k}).
\]
Note $\sum_{k=0}^n \rho_{n,k}(1 + \sqrt{\e_n k}) \rightarrow 1 + \int_0^\infty \sqrt{x} Q(x) dx < \infty.$ Therefore,  $\sum_{k=0}^{n-1} \rho_{n,k}|a_{n,k}^2 - 1| = O(\e_n).$ Use this estimate to replace the first $n/2$ terms in $I_{2,n}$ with $\rho_{n,k}/(n-k)$, with an error of $2n^{-1}O(\e_n) = O(n^{-5/3})$. The sum of the terms where $k > n/2$ is exponentially small:
\begin{equation}
\label{eq:prior_tail_exponentiall_small}
\sum_{k=n/2}^{n-1} \frac{\rho_{n,k}a_{n,k}^2}{n-k} \leq \sum_{k = n/2} \rho_{n,k} \leq \frac{\e_n \sum_{k = n/2}^{n-1} Q(\e_n k)}{\e_nZ_n} \leq \frac{\e_n \sum_{k = n/2}^{n-1} \exp(-ck^{3/2}/n)}{\e_nZ_n} = O(e^{-c'\sqrt{n}}).
\end{equation}
Thus,
\begin{align*}
    I_{2,n} = \sum_{k=0}^{n-1} \frac{\rho_{n,k}}{n-k} + o(n^{-4/3}).
\end{align*}
For $k \leq n/2$, it is not hard to show that 
\[
\left|\frac{1}{n - k} - \frac{1}{n} -\frac{k}{n^2}\right| \leq \frac{Ck^2}{n^3}
\]
for some absolute constant $C.$ Then by Riemann sum convergence,
\begin{align*}
    \sum_{k=0}^{n-1} \frac{\rho_{n,k}}{n-k} & = \sum_{k=0}^{n-1} \frac{\rho_{n,k}}{n} + \sum_{k=0}^{n-1} \frac{k\rho_{n,k}}{n^2} + O\left(\sum_{k=0}^{n-1} \frac{k^2\rho_{n,k}}{n^3}\right) \\
    & = \frac{1}{n} +  n^{-2}\left(n^{2/3} \int_0^\infty tQ(t) dt + o(n^{2/3}) \right) + O(n^{-5/3}) \\
    & = \frac{1}{n} +   \frac{\int_0^\infty tQ(t) dt}{n^{4/3}} + o(n^{-4/3}).
\end{align*}

Using again the prior tail estimate in \eqref{eq:prior_tail_exponentiall_small} to handle the terms $k > n/2$, we conclude 
\begin{equation}
\label{eq:second_information_term_analysis}
I_{2,n} = \frac{1}{n} + \frac{\int_0^\infty t\phi_A(t)^2 dt}{n^{4/3}} + o(n^{-4/3}).
\end{equation}
\noindent \textit{Combining.} Adding the two information terms in \eqref{eq:second_information_term_analysis}, \eqref{eq:first_information_term_analysis} and applying Proposition \ref{prop:airy_solution_properties} gives
\begin{align*}
    I_n(\rho) & =I_{1,n} + I_{2,n} \\
    & = \frac{1}{n} + \frac{\int_0^\infty t\phi_A(t)^2 + 4\phi_A'(t)^2 dt}{n^{4/3}} + o(n^{-4/3}) \\
    & = \frac{1}{n} + \frac{C_A}{n^{4/3}} + o(n^{-4/3}).
\end{align*}
Van Trees' Inequality lower bounds the Bayes risk $B_n$ under $\Pi_n$ by $\frac{(1 - \rho_{n,0})^2}{n^2I_n(\rho)}$. By our previous analysis we have
\[
B_n \geq \frac{1}{n}\left(1 + O(n^{-2/3}) \right) \left(1 - C_A n^{-1/3} + o_n(n^{-1/3})\right) = \frac{1}{n} - \frac{C_A}{n^{4/3}} + o(n^{-4/3}),
\]
as desired.
\end{proof}

\section{Proofs for Section \ref{sec:comparison_to_standard_methods}}

\begin{proof}[Proof of Prop. \ref{prop:maximum_risks}]
Throughout this proof, let $p,q,r_1,r_0$ be the number of potential outcome pairs $(Y_i(1),Y_i(0))$ which equal $(1,0),(0,1),(1,1),(0,0)$ respectively and set $r = r_1 + r_0$. Consider the first claim regarding \CRE \ with the difference in means estimator \eqref{eq:dim}. The variance formula for difference-in-means in a completely randomized experiment with $n_1$ treated units is well-known. Defining $\mu_1 := \frac{1}{n}\sum_{i=1}^n Y_i(1), \mu_0 := \frac{1}{n}\sum_{i=1}^n Y_i(0)$ and $\bar{\mu}$ their average, it is given by
\begin{equation}
\label{eq:risk_of_CRE_DIM}
\Var(\hat{\tau}_{\DIM}) = \frac{1}{n_1}S_1^2 + \frac{1}{n-n_1}S_0^2 - \frac{1}{n}S_\tau^2,
\end{equation}
with $S_1^2 = \frac{1}{n-1}\sum_{i=1}^n (Y_i(1) - \mu_1)^2, S_0^2 = \frac{1}{n-1}\sum_{i=1}^n (Y_i(0) - \mu_0)^2$ and $S_\tau^2 = \frac{1}{n-1}\sum_{i=1}^n [Y_i(1) - Y_i(0) - (\mu_1 - \mu_0)]^2$. When $n_1 = n/2$, the largest possible value of $S_1^2$ and $S_0^2$ is $n/(4(n-1))$, when the finite populations are equally split between $0$ and $1$.  Thus under the \CRE, 
\[
\Var(\hat{\tau}_{\DIM}) \leq \frac{2}{n}S_1^2 + \frac{2}{n}S_0^2 \leq \frac{1}{n-1}.
\]
This upper bound is attained by the configuration where $r_0 = r_1 = n/2$.

Next, consider the procedure $(\BRE,\DIM)$. Note this procedure is biased because of the edge-case handling. Let $K$ be the number of treated units. By conditioning on the number of treated units $K = n_1$, the experiment is a completely randomized experiment with $n_1$ units treated. The law of total variance gives
\begin{align*}
    \E(\hat \tau_{\DIM} - \tau)^2 & = \E\left[\Var(\hat \tau_{\DIM} - \tau \mid K = n_1 )\right] + \tau^2 2^{1-n}. \\
    & = \E\left[\Var(\hat \tau_{\DIM} \mid K = n_1 )\right] + \tau^2 2^{1-n}. \\
    & = \E\left[ \frac{1}{K} S_1^2 + \frac{1}{n - K}S_0^2 - \frac{1}{n}S_\tau^2; K \notin \set{0,n}\right] + \tau^22^{1-n}.
\end{align*}
Letting $a_n := \E[K^{-1}; K \notin \set{0,n}],$ we have
\begin{equation}
\label{eq:risk_of_BRE_DIM}
\E(\hat \tau_{\DIM} - \tau)^2 = a_n (S_1^2 + S_0^2) - \frac{(1-2^{1-n})}{n}S_\tau^2 + 2^{-n+1}\tau^2
\end{equation}
 Because $Y_i(1), Y_i(0) \in [0,1]$, we have $(n-1)S_1^2 = \sum_{i=1}^n (Y_i(1) - \mu_1)^2 = \sum_{i=1}^n Y_i(1)^2 - n\mu_1^2 \leq n\mu_1(1 - \mu_1)$. The same holds for $S_0^2$. Then
\begin{align*}
\E(\hat \tau_{\DIM} - \tau)^2  & \leq \frac{n}{n-1}a_n\left( \mu_1(1-\mu_1) + \mu_0(1-\mu_0)\right) + 2^{1-n}\tau^2 \\
& = \frac{n}{n-1}a_n\left(2\bar{\mu} - 2\bar{\mu}^2 -\frac{1}{2}\tau^2\right) + 2^{1-n}\tau^2 \\
& \leq \frac{n}{2(n-1)}a_n(1-\tau^2) + 2^{1-n}\tau^2.
\end{align*}
It is easy to show that $na_n/(2n-2) \geq 2^{1-n}$ for all $n \geq 2$. Therefore, $\E(\hat \tau_{\DIM} - \tau)^2  \leq na_n/(2n-2)$. The configuration where $r_0 = r_1 = n/2$ also satisfies $\E(\hat \tau_{\DIM} - \tau)^2  = na_n/(2n-2)$. This proves the maximum risk of (\BRE, \DIM) is exactly $na_n/(2n-2)$. Now, we compute the asymptotic expansion of this quantity. By Lemma \ref{lemma:binomial_inverse_moment}, $a_n = \frac{2}{n} + \frac{2}{n^2} + O(n^{-3})$ and $n/(n-1) = 1 + \frac{1}{n} + \frac{1}{n^2} + O(n^{-3})$. Therefore, 

\[
\sup_{\cl{P}} \E(\hat \tau_{\DIM} - \tau)^2  = \frac{1}{n} + \frac{2}{n^2} + O(n^{-3}).
\]

Thirdly, we prove the claim that in a standard \BRE, the Horvitz-Thompson estimator \eqref{eq:horvitz_thompson} has maximum risk $4/n$. Under the \BRE, note that \eqref{eq:horvitz_thompson} is unbiased. It suffices to compute its variance. 
\begin{align*}
    \Var(\hat \tau_{\HT}) & = \frac{4}{n^2} \sum_{i=1}^n \Var(A_i Y_i(1) - (1-A_i)Y_i(0)) \\
    & = \frac{4}{n^2} \sum_{i=1}^n \Var(A_i(Y_i(1) + Y_i(0)) - Y_i(0)) \\
    & = \frac{1}{n^2} \sum_{i=1}^n (Y_i(1) + Y_i(0))^2.
\end{align*}
This is maximized when $r_1 = n$, yielding $4/n.$

For the procedure $(\BRE,\cHT)$, we compute the risk explicitly. Note that
\begin{align*}
\hat \tau_{\cHT} - \tau & = \frac{1}{n}\sum_{i=1}^n \left[A_i(2Y_i(1) - 1) - (1-A_i)(2Y_i(0) - 1)  - \left(Y_i(1) - Y_i(0)\right)\right].
\end{align*}
When $A_i = 1$, the summand is $(Y_i(1) + Y_i(0) - 1)$. When $A_i = 0$, it is $-(Y_i(1) + Y_i(0) - 1)$. Therefore, 
\begin{equation}
\label{eq:representation_for_cHT}
\hat \tau_{\cHT} - \tau = \frac{1}{n}\sum_{i=1}^n  (2A_i - 1)(Y_i(1) + Y_i(0) - 1).
\end{equation}
From this representation, it is easy to see unbiasedness. Computing the variance, we obtain
\[
\Var(\hat \tau_{\cHT}) = \frac{1}{n^2} \sum_{i=1}^n (Y_i(1) + Y_i(0) - 1)^2,
\]
which is maximized whenever $(Y_i(1),Y_i(0)) \in \set{(1,1),(0,0)}$ for all $i.$
\end{proof}

\begin{proof}[Proof of Theorem \ref{thm:admissibility_of_procedures}]
\noindent \textit{Admissibility of $(\CRE,\DIM)$.} Recall that under the \CRE, the estimators $\hat \tau_{\DIM}, \hat \tau_{\cHT}, \hat \tau_{\HT}$ are all equivalent. The key insight is to note that for any potential outcome configuration $\cl{P}$ such that $Y_i(1) + Y_i(0) = 1$ for all $i$, the risk is exactly zero. This is immediate from our derivation of $\hat \tau_{\cHT} - \tau$ in \eqref{eq:representation_for_cHT}. Any potentially dominating procedure $(\cl{A}, \hat \theta)$ must therefore also have zero risk on such configurations. Fix any vectors $y \in \set{0,1}^n, a \in \set{0,1}^n$ which represent possible realizations of data and treatment respectively. Define the configuration of potential outcomes $\cl{P}_{a,y}$ 
\[
(Y_i(1),Y_i(0)) = 
\begin{cases}
(y_i,1-y_i) & \text{ if } a_i = 1 \\
(1-y_i,y_i) & \text{ if } a_i = 0.
\end{cases}
\]
By domination, we have $\E_{A \sim \cl{A}} \left[(\hat \theta(A,Y) - \tau)^2 \right] = 0.$ In particular, for any $a$ such that $\Prob_{\cl{A}}(A = a) > 0, \hat \theta(a,Y(a)) = \tau.$ Under $\cl{P}_{a,y}$, the realized data $Y(a)$ under the assignment $a$ is exactly the vector $y$. Moreover, the SATE $\tau$ for such a configuration is equal to $\frac{1}{n}\sum_{i=1}^n (2a_i - 1)(2y_i - 1).$ Therefore,
\begin{equation}
    \hat \theta(a,y) = \frac{1}{n}\sum_{i=1}^n (2a_i - 1)(2y_i - 1),
\end{equation}
for all choices of $y$ and $a$ in the support of $\cl{A}$. Therefore, $\hat \theta(A,Y) = \frac{1}{n}\sum_{i=1}^n (2A_i - 1)(2Y_i - 1)$ on the support of the design, which is exactly the $\hat \tau_{\cHT}$ estimator after some manipulation. Any dominating procedure must therefore use $\hat \tau_{\cHT}$. We will proceed to show that no choice of design can dominate $\CRE$ once the estimator is fixed to be $\hat \tau_{\cHT}$.

By \eqref{eq:representation_for_cHT}, the risk for $\hat \tau_{\cHT}$ is equal to $\E[u^\top \tilde{A}\tilde{A}^\top u]/n^2$ where $\tilde{A} = 2A - 1$ and $u = Y(1) + Y(0) - 1$. This representation has been used previously in the literature \cite{kallus2018optimal}. For the CRE, the design second-moment matrix is easily analyzed and is given by
\[
\Sigma_{\CRE} = \E[\tilde{A}\tilde{A}^\top] = \frac{n}{n-1}I_n - \frac{1}{n-1}\mathbf{1}\mathbf{1}^\top.
\]
Let $\Sigma_{\cl{A}}$ be the analogous second-moment matrix under the dominating design $\cl{A}$. Domination therefore requires
\[
u^\top (\Sigma_{\cl{A}} - \Sigma_{\CRE}) u \leq 0,
\]
for all $u \in \set{-1,0,1}^n$. Taking the choice of vectors $u = e_i,-e_i$ for each $i$ and the fact that $\tilde{A}_i^2 = 1$ shows that the diagonals must be equal. The choices $u = e_i + e_j, e_i - e_j$ show that the off-diagonal entries must be equal. Therefore, the risks are exactly equal and strict domination of $(\cl{A},\hat \theta)$ at some configuration fails. We conclude $(\CRE,\DIM)$ is admissible. Note the $\CRE$ design was only used insofar as $\hat{\tau}_{\cHT} = \hat{\tau}_{\DIM} = \hat{\tau}_{\HT}$ for this design. \\

\noindent \textit{Admissibility of $(\BRE,\cHT)$.} The proof is the same as the previous claim. \\

\noindent \textit{Domination of $(\BRE,\HT)$.} We will show that $(\CRE,\DIM)$ dominates $(\BRE, \HT).$ The usual Horvitz-Thompson admits a representation akin to \eqref{eq:representation_for_cHT}. By the same logic, it can be checked that 
\[
\hat \tau_{\HT} - \tau = \frac{1}{n} \sum_{i=1}^n (2A_i - 1)(Y_i(1) + Y_i(0)).
\]
Since $\hat \tau_{\DIM} = \hat \tau_{\HT}$ for the \CRE, it suffices to show $u^\top \Sigma_{\BRE} u \geq u^\top \Sigma_{\CRE} u$ where $u = Y(1) + Y(0) \in \set{0,1,2}^n$. This reduces to checking 
\[
u^\top\left(I_n - \frac{n}{n-1}I_n + \frac{1}{n-1}\mathbf{1}\mathbf{1}^\top \right)u \geq 0,
\]
or $(\sum u_i)^2 \geq \sum u_i^2$ for all $u \in \set{0,1,2}^n$ which is clearly true by nonnegativity of the entries. It is easy to exhibit one configuration where the risk domination is strict. \\

\noindent \textit{Domination of $(\BRE,\DIM).$} We computed the risks of $(\CRE,\DIM),(\BRE,\DIM)$ in Equations \eqref{eq:risk_of_CRE_DIM}, \eqref{eq:risk_of_BRE_DIM}. Subtracting the two gives
\[
R_n(\BRE,\DIM) - R_n(\CRE,\DIM) = \left(a_n - \frac{2}{n}\right)(S_1^2 + S_0^2) + \frac{2^{1-n}}{n}S_\tau^2 + 2^{-n+1}\tau^2.
\]
Recall that $a_n = \E[K^{-1};K\notin \set{0,n}]$ and $K \sim \Bin(n,1/2).$ We claim $a_n > 2/n$, which would show each of the terms in the risk difference is nonnegative. By pairing $1/k$ and $1/(n-k),$
\begin{align*}
2a_n & = 2^{-n}\sum_{k=1}^{n-1} \binom{n}{k}\left(\frac{1}{k} + \frac{1}{n-k} \right) \\
& = 2^{-n}\sum_{k=1}^{n-1} \binom{n}{k}\left(\frac{4}{n} + \frac{(n - 2k)^2}{nk(n-k)}\right) \\
& \geq \frac{4}{n}\left(1 - 2^{1-n} \right) + 2^{-n}\sum_{k=1}^{n-1} \binom{n}{k}\frac{(n - 2k)^2}{nk(n-k)} \\
& \geq \frac{4}{n}\left(1 - 2^{1-n} \right) + 2^{1-n} \frac{(n-2)^2}{(n-1)},
\end{align*}
by keeping the terms with $k = 1,n-1$. Since $(n-2)^2/(n-1) > 4/n$ for $n \geq 4$, the proof is complete. The configuration where $r_1 = n/2$ and $r_0 = n/2$ shows the risk difference is strictly greater than zero.
\end{proof}

\section{Miscellaneous Lemmas}

\begin{lemma}[Stein's Lemma for Rademacher Sums]
\label{lemma:stein_rademacher_sums}
Let $\e_1,\dots,\e_n$ be i.i.d. Rademacher random variables, and let $U_k = \sum_{i=1}^k \e_i$. Then for any function $f,$ we have
\[
\E[U_nf(U_n)] = \frac{n}{2}\left(\E[f(U_{n-1} + 1)] - \E[f(U_{n-1} - 1)]\right)
\]
\end{lemma}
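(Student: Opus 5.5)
The plan is to condition on all signs except the last one, i.e. to write $U_n = U_{n-1} + \e_n$, where $\e_n$ is independent of $U_{n-1}$. Exchangeability does the main work. Because $\e_1,\dots,\e_n$ are i.i.d., each pair $(\e_i, U_n)$ has the same joint law. Hence
\[
\E[U_n f(U_n)] = \sum_{i=1}^n \E[\e_i f(U_n)] = n\,\E[\e_n f(U_n)].
\]
This reduces the claim to computing a single term.

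For that term, write $U_n = U_{n-1} + \e_n$ and condition on $U_{n-1}$. Averaging over $\e_n \in \{\pm 1\}$, each value having probability $1/2$, gives
\[
\E[\e_n f(U_{n-1}+\e_n) \mid U_{n-1}] = \tfrac12 f(U_{n-1}+1) - \tfrac12 f(U_{n-1}-1).
\]
Taking expectations and multiplying by $n$ yields the stated identity.

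There is no genuine obstacle here, only bookkeeping. Since $U_n$ takes finitely many values, every expectation is a finite sum, so the claim holds for arbitrary $f$ with no integrability conditions. For the edge case $n=1$, we take $U_0 = 0$, and the identity reads $\E[\e_1 f(\e_1)] = \tfrac12\bigl(f(1)-f(-1)\bigr)$, which is immediate. The one point to verify carefully is that the exchangeability step uses the joint law of $(\e_i, U_n)$, not just the marginal law of $\e_i$. This holds because $U_n$ is a symmetric function of the $\e_j$'s.
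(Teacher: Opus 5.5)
Your proof is correct and takes the same route as the paper, whose proof consists only of the remark that the identity is ``a direct consequence of exchangeability.'' You fill in that remark: exchangeability reduces $\E[U_n f(U_n)]$ to $n\,\E[\e_n f(U_n)]$, and averaging over $\e_n$ conditional on $U_{n-1}$ then gives the identity.
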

\begin{proof}
This is a direct consequence of exchangeability.
\end{proof}

    

\begin{lemma}[Finite Minimax Theorem]
\label{lemma:existence_of_lfp}
Let $\cl{X}$ be a finite space and $\Theta$ be a finite space. For each $\theta \in \Theta$, let $P_\theta$ be a probability distribution on $\cl{X}$ and $X \sim P_\theta$. Let $g(\theta) \in [-1,1]$ be an estimand. For any estimator $\delta:\cl{X} \rightarrow [-1,1]$ define $R(\delta,g(\theta)) = \E_\theta(\delta(X) - g(\theta))^2.$ Then
\begin{equation}
\inf_\delta \sup_{\theta \in \Theta} R(\delta,g(\theta)) = \sup_{\pi \in \Delta(\Theta)} \inf_\delta \E_{\theta \sim  \pi} R(\delta,g(\theta)).
\end{equation}
There exists a least-favorable prior, for which the right-hand side supremum is attained, as well as a minimax estimator $\delta$ such that the left-hand side infimum is attained.
\end{lemma}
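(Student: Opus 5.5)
The plan is to prove the finite minimax theorem as a convex–concave saddle point statement over compact convex sets. I will identify an estimator with a vector $\delta \in D := [-1,1]^{\cl{X}}$, which is compact and convex, and a prior with a point of the simplex $\Delta(\Theta)$, also compact and convex. The payoff is
\[
L(\delta,\pi) := \sum_{\theta\in\Theta}\pi_\theta \sum_{x\in\cl{X}} P_\theta(x)\bigl(\delta(x)-g(\theta)\bigr)^2 .
\]
It is jointly continuous, convex in $\delta$ because each summand is a nonnegative combination of convex quadratics, and linear (hence concave) in $\pi$.

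Since $\sup_{\pi}L(\delta,\pi)=\max_{\theta}R(\delta,g(\theta))$, the two sides of the claimed identity are $\inf_\delta\sup_\pi L$ and $\sup_\pi\inf_\delta L$. First I will invoke the classical minimax theorem (von Neumann–Sion) to conclude that these are equal. Its hypotheses are compact convex strategy sets, continuity of $L$, convexity in $\delta$ and concavity in $\pi$, all of which hold here.

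For attainment I will argue as follows.
\begin{itemize}
\item The map $\delta\mapsto\max_\theta R(\delta,g(\theta))$ is a finite maximum of continuous functions, so it is continuous on the compact set $D$. It therefore attains its infimum, which gives a minimax estimator.
\item The map $\pi\mapsto\inf_{\delta\in D}L(\delta,\pi)$ is an infimum of linear functions, so it is concave and upper semicontinuous on the compact simplex. It therefore attains its supremum, which gives a least-favorable prior.
\item For each fixed $\pi$, the inner infimum is attained by the posterior mean, truncated pointwise. Because $g\in[-1,1]$, this posterior mean already lies in $[-1,1]$ wherever the marginal probability of $x$ is positive. So the restriction to $D$ costs nothing compared with unrestricted estimators.
\end{itemize}

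No step is a genuine obstacle. The only points requiring care are checking the compactness and continuity hypotheses, and confirming that restricting estimators to $[-1,1]$ loses nothing. The latter holds because clipping $\delta$ to $[-1,1]$ reduces $|\delta(x)-g(\theta)|$ pointwise when $g(\theta)\in[-1,1]$.
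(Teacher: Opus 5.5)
Your proposal is correct and follows the paper's proof: Sion's minimax theorem on the compact convex sets $[-1,1]^{\cl{X}}$ and $\Delta(\Theta)$, then compactness to attain both extrema. The only difference is that for the least-favorable prior you use upper semicontinuity of $\pi \mapsto \inf_\delta L(\delta,\pi)$, as an infimum of linear functions, in place of the paper's appeal to Berge's maximum theorem; this is a slightly lighter route to the same conclusion.
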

\begin{proof}
This follows from results of \cite{wald1950statistical}, but we provide a self-contained proof here. Define $L(\pi,\delta) = \E_{\theta \sim \pi} R(\delta,g(\theta))$. We apply Sion's minimax theorem, noting that $\Delta(\Theta)$ is compact convex and the space of estimators $[-1,1]^{\cl{X}}$ is compact convex. Moreover, $L$ is continuous, linear in $\pi$, and convex in $\delta$. Then
\begin{align*}
\sup_{\pi} \inf_{\delta} L(\pi,\delta) & = \inf_\delta \sup_{\pi} L(\pi,\delta) \\
& = \inf_\delta \sup_\theta R(\delta,g(\theta)).
\end{align*}
The supremum over $\pi$ is attained by compactness and the fact that $\pi \mapsto \inf_\delta L(\pi,\delta)$ is continuous by Berge's maximum theorem. The infimum over $\delta$ in $\inf_\delta \sup_{\theta \in \Theta} R(\delta,g(\theta))$ is attained for similar reasons. In particular, $\delta \mapsto \sup_{\theta} R(\delta,g(\theta))$ is continuous as a maximum of finitely many continuous functions, and the domain is compact.
\end{proof}

\begin{lemma}
\label{lemma:binomial_inverse_moment}
Let $K \sim \Bin(n,1/2)$. Then
\[
a_n := \E[K^{-1}; K \notin \set{0,n}] = \frac{2}{n} + \frac{2}{n^2} + O(n^{-3}).
\]
\end{lemma}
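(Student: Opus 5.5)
We want $a_n = \E[K^{-1}; K\notin\set{0,n}]$ for $K\sim\Bin(n,1/2)$. We will Taylor-expand $1/K$ around the mean $n/2$ on a high-probability event and control the tails separately.

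Write $K = n/2 + D$, where $D$ is centered with $\E D^2 = n/4$, $\E D^3 = 0$ (by symmetry) and $\E D^4 = O(n^2)$. On the event $E := \set{|D| \le n/4}$ we use the exact identity
\[
\frac{1}{K} = \frac{2}{n}\cdot\frac{1}{1+2D/n} = \frac{2}{n}\left(1 - \frac{2D}{n} + \frac{4D^2}{n^2} - \frac{8D^3}{n^3} + \frac{16 D^4}{n^4}\cdot\frac{1}{1+2D/n}\right).
\]
On $E$ the factor $1/(1+2D/n)$ is at most $2$. Hence the last term contributes at most $\frac{2}{n}\cdot 32\,\E D^4/n^4 = O(n^{-3})$.

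Taking expectations of the polynomial terms over the whole space gives
\[
\frac{2}{n}\left(1 + \frac{4}{n^2}\cdot\frac{n}{4}\right) = \frac{2}{n} + \frac{2}{n^2}.
\]
Here the linear and cubic terms vanish by symmetry.

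It remains to handle the discrepancy between $E$ and the full space, together with the restriction $K\notin\set{0,n}$.

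1. Restricting the polynomial terms to $E^c$ costs at most $\E[\mathrm{poly}(|D|/n);E^c]$. By Hoeffding, $\Prob(E^c) \le 2e^{-n/8}$, and the polynomial is bounded on the sample space, so this error is exponentially small.
2. The term $\E[K^{-1}; E^c, K\notin\set{0,n}]$ is at most $\Prob(E^c)$, since $K^{-1}\le 1$ there. This is again exponentially small.
3. The events $K\in\set{0,n}$ lie in $E^c$, so they are already excluded from the main term.

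Combining these gives $a_n = 2/n + 2/n^2 + O(n^{-3})$.

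The only mildly delicate step is bookkeeping the remainder so that it is uniformly $O(n^{-3})$. The truncation to $E$, which bounds $1/(1+2D/n)$, together with the moment bound $\E D^4 = 3(n/4)^2 - 2n/16 = O(n^2)$, handles this.
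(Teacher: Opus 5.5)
Your proof is correct, but it takes a different route from the paper. The paper uses an exact combinatorial identity. It sets $S_n = \sum_{j=1}^n \frac{1}{j}\binom{n}{j}$, and Pascal's rule gives $S_n - S_{n-1} = (2^n-1)/n$. Telescoping yields $S_n = \sum_{k=1}^n (2^k-1)/k$, and hence
\[
a_n = \sum_{l=0}^{n-1} \frac{2^{-l}}{n-l} - 2^{-n}\left(H_n + \frac{1}{n}\right).
\]
The paper then expands $\frac{1}{n-l} = \frac{1}{n} + \frac{l}{n^2} + O(l^2/n^3)$ for $l \le n/2$, and the series $\sum_l 2^{-l} = 2$ and $\sum_l l\,2^{-l} = 2$ produce the two terms.

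You instead run the delta method. You use the exact fourth-order expansion of $1/(1+x)$ with $x = 2D/n$, truncated to $\{|D| \le n/4\}$, where the remainder factor is at most $2$. The moments $\E D^2 = n/4$, $\E D^3 = 0$ and $\E D^4 = O(n^2)$ then do the work, with a Hoeffding bound for the complement. Your observation that $\{K \in \{0,n\}\} \subseteq E^c$ disposes of the restriction in the definition of $a_n$ at no cost.

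The paper's route gives an exact representation in which every higher-order coefficient is an explicit series $\sum_l l^j 2^{-l}$. However, it relies on the explicit form of the $\Bin(n,1/2)$ probabilities. Your route shows the second-order term is $\sigma^2/\mu^3$ and carries over essentially verbatim to $\Bin(n,p)$ or to sums of bounded i.i.d. variables. Its cost is that each additional order requires more moments and more truncation bookkeeping.
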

\begin{proof}
Let $S_n := \sum_{j=1}^n \frac{1}{j} \binom{n}{j}.$ By Pascal's identity, 
\begin{align*}
S_n - S_{n-1} & = \sum_{j=1}^{n-1} \frac{1}{j} \binom{n-1}{j-1} + \frac{1}{n} \\
& = \sum_{j=1}^{n} \frac{1}{j} \binom{n-1}{j-1} \\
& = \frac{1}{n} \sum_{j=1}^{n} \binom{n}{j} \\
& = \frac{2^n - 1}{n}.
\end{align*}
Telescoping then yields $S_n = \sum_{k=1}^n \frac{2^k-1}{k}$. Therefore, with $H_n$ being the harmonic sum,
\begin{align*}
a_n & = \frac{1}{2^n} \left(\sum_{k=1}^n \frac{2^k-1}{k} - \frac{1}{n}\right) \\
& = \sum_{l = 0}^{n-1} \frac{2^{-l}}{n - l} - 2^{-n} \left(H_n + \frac{1}{n}\right).
\end{align*}
The contribution from the $l \geq n/2$ tail in the sum above is exponentially small, so we can absorb it and the term $2^{-n} \left(H_n + \frac{1}{n}\right)$ into an $O(n^{-3})$ remainder.  From the expansion $\frac{1}{n-l} = \frac{1}{n}\cdot \frac{1}{1 - l/n} = \frac{1}{n} + \frac{l}{n^2} + O\left( \frac{l^2}{n^3}\right)$, we have
\begin{align*}
a_n & = \frac{1}{n}\sum_{l=0}^{n/2} 2^{-l} + \frac{1}{n^2}\sum_{l=0}^{n/2} l2^{-l} + O(n^{-3}) \\
& = \frac{1}{n}\sum_{l=0}^{\infty} 2^{-l} + \frac{1}{n^2}\sum_{l=0}^{\infty} l2^{-l} + O(n^{-3}) \\
& = \frac{2}{n} + \frac{2}{n^2} + O(n^{-3}).
\end{align*}

\end{proof}

\end{document}